\DeclareRobustCommand{\SkipTocEntry}[4]{}
\newcommand\@dotsep{4.5}
\def\@tocline#1#2#3#4#5#6#7{\relax
  \ifnum #1>\c@tocdepth 
  \else
    \par \addpenalty\@secpenalty\addvspace{#2}%
    \begingroup \hyphenpenalty\@M
    \@ifempty{#4}{%
      \@tempdima\csname r@tocindent\number#1\endcsname\relax
    }{%
      \@tempdima#4\relax
    }%
    \parindent\z@ \leftskip#3\relax \advance\leftskip\@tempdima\relax
    \rightskip\@pnumwidth plus1em \parfillskip-\@pnumwidth
    #5\leavevmode\hskip-\@tempdima #6\relax
    \leaders\hbox{$\m@th
      \mkern \@dotsep mu\hbox{.}\mkern \@dotsep mu$}\hfill
    \hbox to\@pnumwidth{\@tocpagenum{#7}}\par
    \nobreak
    \endgroup
  \fi}
\DeclareFontFamily{OT1}{rsfs}{}
\DeclareFontShape{OT1}{rsfs}{n}{it}{<-> rsfs10}{}
\DeclareMathAlphabet{\curly}{OT1}{rsfs}{n}{it}
\newcommand{\cO}{\mathcal{O}}
\newcommand{\T}{\mathbf{T}}
\newcommand\C{\mathbb C}
\newcommand\CC{\mathsf C}
\newcommand\F{\curly F}
\newcommand{\Ib}{{\mathbb{I}}}
\newcommand\I{\curly I}
\newcommand\m{\mathfrak m}
\newcommand\OO{\mathcal{O}}
\newcommand\PP{\mathbb P}
\newcommand\Pp{{\mathbb P}^1}
\newcommand\FF{\mathbb F}
\newcommand\FFF{\mathsf{F}}
\newcommand\GGG{\mathsf{G}}
\newcommand\Q{\mathbb Q}
\newcommand\Z{\mathbb Z}
\newcommand\bW{\mathsf{W}}
\newcommand\ZZ{\mathsf Z}
\newcommand\com{\mathbb C}
\newcommand{\bV}{\mathsf{V}}
\newcommand{\bw}{\mathsf{w}}
\newcommand{\rarr}{\rightarrow}
\newcommand{\Rt}[1]{\stackrel{#1\,}{\longrightarrow}}
\newcommand\To{\longrightarrow}
\newcommand\into{\hookrightarrow}
\newcommand\Into{\ar@{^{ (}->}[r]}
\newcommand\bull{{\scriptscriptstyle\bullet}}
\newcommand\udot{^\bull}
\newcommand\rk{\operatorname{rank}}
\newcommand\tr{\operatorname{tr}}
\newcommand\Hom{\operatorname{Hom}}
\renewcommand\hom{\curly H\!om}
\newcommand\Ext{\operatorname{Ext}}
\newcommand{\SSS}{\mathcal{S}}
\DeclareMathOperator{\Ver}{Vert}
\DeclareMathOperator{\Sym}{Sym}
\DeclareMathOperator{\sgn}{sgn}
\DeclareMathOperator{\ch}{ch}
\newcommand\beq{\begin{equation}}
\newcommand\eeq{\end{equation}}
\newtheorem{thm}{Theorem}
\newtheorem{lem}{Lemma}
\newtheorem{conj}{Conjecture}
\newtheorem{prop}{Proposition}
\title{\textbf{Descendents on local curves: Rationality}}
\author{R. Pandharipande and A. Pixton}
\date{May 2012}
\begin{document}

\begin{abstract} \noindent
We study the stable pairs theory of local curves in 3-folds  
with descendent insertions.
The rationality of the partition function
of descendent invariants is established
for the full local curve geometry (equivariant
with respect
to the scaling 2-torus) including relative
conditions and odd degree insertions  for higher genus curves. 
The capped 1-leg descendent vertex (equivariant
with respect to the 3-torus)
is also proven to be rational. 
The results are obtained by combining geometric constraints
with a detailed analysis of the poles of the descendent
vertex.

\end{abstract}

\maketitle

\setcounter{tocdepth}{1} 
\tableofcontents


\setcounter{section}{-1}
\section{Introduction}

\subsection{Descendents}\label{dess}
Let $X$ be a nonsingular 3-fold, and let
$$\beta \in H_2(X,\mathbb{Z})$$ be a nonzero class. We will study here the
moduli space of stable pairs
$$[\OO_X \stackrel{s}{\rightarrow} F] \in P_n(X,\beta)$$
where $F$ is a pure sheaf supported on a Cohen-Macaulay subcurve of $X$, 
$s$ is a morphism with 0-dimensional cokernel, and
$$\chi(F)=n, \  \  \ [F]=\beta.$$
The space $P_n(X,\beta)$
carries a virtual fundamental class obtained from the 
deformation theory of complexes in
the derived category \cite{pt}. A review can be found in Section \ref{ooo}.

Since $P_n(X,\beta)$ is a fine moduli space, there exists a universal sheaf
$$\FF \rightarrow X\times P_{n}(X,\beta),$$
see Section 2.3 of \cite{pt}.
For a stable pair $[\OO_X\to F]\in P_{n}(X,\beta)$, the restriction of
$\FF$
to the fiber
 $$X \times [\OO_X \to F] \subset 
X\times P_{n}(X,\beta)
$$
is canonically isomorphic to $F$.
Let
$$\pi_X\colon X\times P_{n}(X,\beta)\to X,$$
$$\pi_P\colon X\times P_{n}(X,\beta)
\to P_{n}(X,\beta)$$
 be the projections onto the first and second factors.
Since $X$ is nonsingular
and
$\FF$ is $\pi_P$-flat, $\FF$ has a finite resolution 
by locally free sheaves.
Hence, the Chern character of the universal sheaf $\FF$ on 
$X \times P_n(X,\beta)$ is well-defined.
By definition, the operation
$$
\pi_{P*}\big(\pi_X^*(\gamma)\cdot \text{ch}_{2+i}(\FF)
\cap(\pi_P^*(\ \cdot\ )\big)\colon 
H_*(P_{n}(X,\beta))\to H_*(P_{n}(X,\beta))
$$
is the action of the descendent $\tau_i(\gamma)$, where
$\gamma \in H^*(X,\Z)$.

For nonzero $\beta\in H_2(X,\Z)$ and arbitrary $\gamma_i\in H^*(X,\Z)$,
define the stable pairs invariant with descendent insertions by
\begin{eqnarray*}
\left\langle \prod_{j=1}^k \tau_{i_j}(\gamma_j)
\right\rangle_{\!n,\beta}^{\!X}&  = &
\int_{[P_{n}(X,\beta)]^{vir}}
\prod_{j=1}^k \tau_{i_j}(\gamma_j) \\
& = & 
\int_{P_n(X,\beta)} \prod_{j=1}^k \tau_{i_j}(\gamma_{j})
\Big( [P_{n}(X,\beta)]^{vir}\Big).
\end{eqnarray*}
The partition function is 
$$
\ZZ^X_{\beta}\left(   \prod_{j=1}^k \tau_{i_j}(\gamma_{j})
\right)
=\sum_{n} 
\left\langle \prod_{j=1}^k \tau_{i_j}(\gamma_{j}) 
\right\rangle_{\!n,\beta}^{\!X}q^n.
$$

Since $P_n(X,\beta)$ is empty for sufficiently negative
$n$, 
$\ZZ^X_{\beta}\big(   \prod_{j=1}^k \tau_{i_j}(\gamma_{j})
\big)$
is a Laurent series in $q$. The following conjecture was made in 
\cite{pt2}.

\begin{conj}
\label{111} 
The partition function
$\ZZ_{\beta}^X\big(   \prod_{j=1}^k \tau_{i_j}(\gamma_{j})
\big)$ is the 
Laurent expansion of a rational function in $q$.
\end{conj}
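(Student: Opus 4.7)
The plan is to reduce the assertion, in the local curve setting where $X$ is the total space of a rank-$2$ bundle $N\to C$ with its scaling $2$-torus $\T$ action, to rationality of a single basic building block: the capped $1$-leg descendent vertex. The first step is to apply the stable pairs degeneration formula along a one-parameter family that degenerates $C$ into a chain of $\Pp$'s. This rewrites
\[
\ZZ^X_\beta\!\left(\prod_{j=1}^k \tau_{i_j}(\gamma_j)\right)
\]
as a finite sum of products of relative $\T$-equivariant partition functions on local $\Pp$'s, glued by $\T$-equivariant rubber integrals over moduli of maps to $\Pp/\{0,\infty\}$. Rationality of the full partition function is then implied by rationality of each factor, so one is reduced to the case of local $\Pp$ carrying relative conditions at the two ends.

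On local $\Pp$, the scaling $\T$-action combines with the $\C^*$-action on $\Pp$ to give a $3$-torus action, for which virtual localization applies. The $T$-fixed loci of the relevant $P_n$ are labelled by pairs of $3$D partitions at the two distinguished fibers with matching asymptotic leg profile along the base, and each localization contribution factors as a product of two capped $1$-leg descendent vertices sharing their leg data. Consequently it suffices to prove that the capped $1$-leg descendent vertex is a rational function of $q$ over $\Q(s_1,s_2,s_3)$, where the $s_i$ are the equivariant parameters of the $3$-torus.

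To establish rationality of the capped vertex one decomposes it as the bare (non-capped) $1$-leg descendent vertex multiplied by an inverse rubber cap, and analyzes the $q$-poles of each. The bare vertex admits an explicit expansion as a weighted sum over $3$D partitions with prescribed infinite leg, with descendent insertions contributing through the equivariant Chern characters $\ch_{2+i}(\FF)$; these insertions are unbounded as one moves along the leg, and produce poles in the equivariant parameters that have no analogue in the pure stable pairs theory of \cite{pt}. The heart of the argument is to show that these descendent-induced poles are exactly matched by the poles of the rubber factor so that the product is rational. I expect this to be the main obstacle: it will require a delicate combinatorial analysis of how equivariant box weights along the infinite leg interact with the Chern character expansion, together with geometric constraints from the rubber moduli that restrict which denominators can actually survive after integration. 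Once the capped $1$-leg vertex is rational, combining with the degeneration and localization steps above yields rationality of $\ZZ^X_\beta$ for the full local curve geometry.
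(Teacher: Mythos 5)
Your global architecture agrees with the paper's: degenerate the local curve to rational pieces, use the degeneration formula to reduce to relative geometries over $\Pp$, and then reduce by $\mathbf{T}$-localization to the rationality of the capped $1$-leg descendent vertex (Theorem \ref{cnnn}). But the mechanism you propose for that last and decisive step --- ``decompose the capped vertex as the bare vertex times an inverse rubber cap and show the descendent-induced poles are exactly matched by the poles of the rubber factor'' --- is a genuine gap, and it is not how the paper argues. The difficulty is not a matching of poles in the equivariant variables between the two factors: the bare descendent vertex $\bW^{\mathsf{Vert}}_\mu$ is an \emph{infinite} sum over box configurations, hence a priori only a formal Laurent series in $q$, and no pole-matching with the rubber factor explains why the product is a rational function of $q$. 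Your proposal never actually produces a source of $q$-rationality.

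What the paper does instead has three ingredients you are missing. First, a geometric constraint (Lemma \ref{rq2}, proved via a Hilbert--Chow type morphism to $\oplus_1^d\com^2$) shows each capped invariant is a \emph{polynomial} in $s_3$ of degree bounded by the dimension count $\delta$. Second, at the special values $s_3=\frac{1}{a}(s_1+s_2)$ the infinite sum defining $\bW^{\mathsf{Vert}}_\mu$ collapses: Theorem \ref{canpole} shows the evaluation is well defined and is a Laurent \emph{polynomial} in $q$. This is the real pole cancellation, and it occurs among the vertex terms themselves --- the fixed loci are grouped into finite sets $\SSS_f$ indexed by the specialization of $(1-t_3)\FFF_U$, and the cancellation is an alternating-sum/degree argument over the permutation groups $\Sym_\mu$ (Proposition \ref{vanishing}), with the rubber playing no role. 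Third, the rationality in $q$ then comes entirely from the rubber factor $\mathsf{S}^\mu_\eta$ at these specializations, which is known from the quantum differential equation of $\text{Hilb}(\com^2,d)$; a Vandermonde argument in $s_3$ recovers the general statement. Beyond this, the full Theorem \ref{cnnn} (descendents at the relative point, hence descendents of $\mathsf{1}$ on the curve) requires the depth induction interweaving the $(0,0)$-cap and $(0,0)$-tube via the rubber calculus of Section \ref{rubc}, and descendents of odd cohomology require the formal reduction of \cite{vir}; neither appears in your outline. Your identification of the combinatorial interaction of leg weights with $\ch_{2+i}(\FF)$ as the main obstacle is accurate, but the route through ``pole matching with the rubber'' would not close it.
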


If only primary field insertions $\tau_0(\gamma)$ appear, 
Conjecture \ref{111} is known for
toric $X$ by \cite{moop, mpt} and for Calabi-Yau $X$ by
\cite{bridge,toda} together with \cite{joy}.
In the presence of descendents $\tau_{i>0}(\gamma)$,
very few results have been obtained.

The central result of the present paper is the proof of
Conjecture 1 in case $X$ is the total space of
an rank 2 bundle over a curve, a {\em local curve}. 
In fact, the rationality
of the stable pairs descendent theory of relative local curves
is proven. 

\subsection{Local curves} \label{lc1}
Let $N$ be a split 
rank 2 bundle on a nonsingular projective curve $C$ of genus $g$,
\begin{equation}\label{ffg}
N=L_1\oplus L_2.
\end{equation}
The splitting determines a scaling action of a 2-dimensional torus
$$T=\C^* \times \C^*$$ on $N$.
The {\em level} of the splitting is the pair of integers
$(k_1,k_2)$ where,
$$k_i= {\text {deg}}(L_i).$$
Of course, the scaling action and the level
depend upon the choice of splitting \eqref{ffg}.

Let $s_1,s_2 \in H^*_\T(\bullet)$ be the first Chern classes
of the standard representations of the first and second
$\C^*$-factors of $T$ respectively.
We define
\begin{equation}\label{lwww}
\left\langle \prod_{j=1}^k \tau_{i_j}(\gamma_j)
\right\rangle_{\!n,d}^{\!N}  = 
\int_{[P_{n}(N,d)]^{vir}}
\prod_{j=1}^k \tau_{i_j}(\gamma_j) \ \ \ \in \mathbb{Q}(s_1,s_2)\ .
\end{equation}
Here,
the curve class is $d$ times the zero section $C \subset N$ and
$$\gamma_j \in H^*(C,\mathbb{Z})\ .$$
The right side of \eqref{lwww} is defined by $T$-equivariant
residues as in \cite{BryanP,lcdt}. Let
$$
\ZZ_{d}^{N}\left(   \prod_{j=1}^k \tau_{i_j}(\gamma_{j})
\right)^T
=\sum_{n}
\left\langle \prod_{j=1}^k \tau_{i_j}(\gamma_{j}) 
\right\rangle_{\!n,d}^{\! N}q^n.
$$

\begin{thm}
\label{onnn} 
$\ZZ_{d}^{N}\big(   \prod_{j=1}^k \tau_{i_j}(\gamma_{j})
\big)^T$ is the 
Laurent expansion in $q$ of a rational function in $\mathbb{Q}(q,s_1,s_2)$.
\end{thm}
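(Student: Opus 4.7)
The plan is to use the degeneration and TQFT formalism to reduce the rationality statement for a genus $g$ curve $C$ to the rationality of a finite list of $T^3$-equivariant building blocks over $\Pp$, and then to invoke the capped 1-leg descendent vertex (whose rationality, as indicated in the abstract, is proven elsewhere in the paper) as the key input.

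First, I would degenerate the base curve $C$ to a nodal chain of $\Pp$'s. The degeneration formula for stable pairs relative to a smooth divisor expresses $\ZZ^N_d(\prod_j \tau_{i_j}(\gamma_j))^T$ as a finite sum of products of $T$-equivariant relative partition functions on local $\Pp$ geometries, paired against the diagonal in the $T$-equivariant cohomology of $\Hilb(\C^2,d)$. The descendent insertions are transported through the degeneration by Künneth-decomposing each $\gamma_j \in H^*(C,\Z)$ against the nodal split; odd classes in $H^1(C)$, which appear only for $g \geq 1$ and are highlighted in the abstract as a special feature, require a sign-tracked splitting argument. Since the gluing pairing has coefficients in $\Q(s_1,s_2)$ independent of $q$, rationality reduces to that of each relative $\Pp$-building block.

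Second, I would cut each $\Pp$-building block into elementary pieces: the $(k_1,k_2)$-cap with one relative point, the $(0,0)$-tube with two relative points, and, as needed, the $(0,0)$ pair of pants. Iterating degenerations allows one to rebuild any level $(k_1,k_2)$ by additivity, so the relative genus-zero theory reduces to cap data. A further $\C^*$-action on each $\Pp$ lifts the $T$-equivariance to $T^3$, and virtual localization on the cap then identifies the cap contribution (with a descendent insertion at the non-relative fibre and a relative boundary state at the relative divisor) as exactly the $T^3$-equivariant capped 1-leg descendent vertex considered in the paper.

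Finally, rationality of the capped 1-leg descendent vertex supplies the missing ingredient, and the inversion from the $\Hilb(\C^2,d)$-basis of relative boundary states back to absolute partition functions preserves rationality because the change-of-basis matrix lies in $\Q(s_1,s_2)$ and is $q$-independent. The principal obstacle is concentrated in this last input: the capped-vertex rationality already encodes the delicate pole analysis of the bare descendent vertex and constitutes the genuinely nontrivial content of the paper; the sketch above only provides the TQFT and localization scaffolding that transports this input to the genus-$g$ local curve theory. A secondary technical point is the careful tracking of odd descendent insertions, which are not directly accessible from the primary-field techniques of \cite{moop, mpt} and force an extension of the Künneth-split machinery to cohomology classes of degree one on $C$.
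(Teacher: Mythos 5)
Your scaffolding---degenerate to relative local curves, run the TQFT reduction to caps and tubes over $\Pp$, and feed in the rationality of the capped descendent vertex as the essential input---is indeed the paper's route to Theorem \ref{onnn} via Theorem \ref{tnnn}. But two steps that you treat as routine transport are genuine gaps. The most serious is the odd cohomology. You propose to degenerate $C$ to a nodal chain of $\Pp$'s and K\"unneth-split each $\gamma_j$ across the nodes with signs. For $\gamma_j\in H^1(C,\mathbb{Z})$ this fails outright: a rational component carries no odd cohomology, so once the genus has been pushed into the dual graph there is nothing for an odd insertion to restrict to. The paper instead keeps all odd descendents on a genus $g$ component carrying the trivial bundle (Section \ref{555}) and invokes the formal machinery of \cite{vir}---degeneration in the presence of odd cohomology between \emph{positive-genus} components, monodromy invariance of the relative theory, and the elliptic vanishing relations---to express the odd descendent series in terms of the even ones. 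This is a different mechanism from sign-tracked splitting, and without it the odd case of Theorem \ref{onnn} is not reached.

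The second gap concerns descendents of $\mathsf{1}$ and the multi-relative-point building blocks. Localizing a cap integral containing $\tau_i(\mathsf{1})$ forces $\mathsf{1}=-[0]/s_3+[\infty]/s_3$, and the resulting descendents of $[\infty]$ sit over the relative divisor: they land in the rubber and are \emph{not} captured by the descendent vertex $\bW_\mu^{\mathsf{Vert}}$ at the non-relative point. The paper controls them with the rubber calculus of Section \ref{rubc} and the two-step induction of Lemmas \ref{nndd} and \ref{p45} (depth $m$ cap $\Rightarrow$ depth $m$ tube $\Rightarrow$ depth $m+1$ cap), whose base case is the stationary vertex where the pole cancellation lives. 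If you take the full Theorem \ref{cnnn} as a black box this induction is subsumed, but then your claim that localization ``identifies'' the cap contribution with the capped vertex is not an accurate description of what is being assumed. Relatedly, the tube and pair of pants are not obtained by ``iterating degenerations'' of caps---degeneration only increases the number of relative insertions; they are solved for by inverting the descendent/relative-condition matrix $M_d$ of Lemma \ref{rtt5}, which is $q$-dependent and whose invertibility is read off from its leading $q^d$ coefficients (Chern characters of the tautological bundle on the Hilbert scheme), not from any $q$-independence of the gluing pairing.
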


The rationality of Theorem \ref{onnn} holds even when
$\gamma_j \in H^1(C,\mathbb{Z})$.
Theorem \ref{onnn} is proven via
the stable pairs theory of relative
local curves and the 1-leg descendent vertex.
The proof provides a  method to
compute $\ZZ_{d}^{N}\big(   \prod_{j=1}^k \tau_{i_j}(\gamma_{j})
\big)^T$.
\subsection{Relative local curves} \label{lc2}

\label{relgeom}
The fiber of $N$ over a point $p\in C$ determines a $T$-invariant
divisor 
$$N_p \subset N$$
isomorphic to $\com^2$ with the standard $T$-action.
For $r>0$, we will consider the local theory of $N$
relative to the divisor
$$S= \bigcup_{i=1}^r N_{p_i} \subset N$$
determined by the fibers over $p_1,\ldots,p_r\in C$.
Let $P_n(N/S,d)$ denote the relative moduli space of stable pairs, see 
\cite{pt} for a discussion.

For each $p_i$, let $\eta^i$
be a partition of $d$ weighted
by the equivariant Chow ring, 
$$A_T^*(N_{p_i},{\mathbb Q})\stackrel{\sim}{=} {\mathbb Q}[s_1,s_2],$$
of the fiber $N_{p_i}$.
By Nakajima's construction,
a weighted partition $\eta^i$ determines a $T$-equivariant class 
$$\CC_{\eta^i} \in A_T^*(\text{Hilb}(N_{p_i},d), \mathbb{Q})$$
in the
Chow ring of the Hilbert scheme of points.
In the theory of stable pairs, the weighted partition $\eta^i$
specifies relative
conditions via the boundary map
$$\epsilon_i: P_n(N/S,d)\rightarrow \text{Hilb}(N_{p_i},d).$$

An element $\eta\in {\mathcal P}(d)$ of the set of 
partitions of $d$ may be 
viewed as a 
weighted partition with all weights set to the identity class
$$1\in H^*_T(N_{p_i},{\mathbb Q})\ .$$
The Nakajima basis of $A_T^*(\text{Hilb}(N_{p_i},d), \mathbb{Q})$ consists of 
identity weighted partitions indexed by ${\mathcal P}(d)$. 
The $T$-equivariant intersection pairing in the Nakajima basis is
$$g_{\mu\nu}=\int_{\text{Hilb}(N_{p_i},d)} \CC_\mu \cup \CC_\nu =
\frac{1}{(s_1s_2)^{\ell(\mu)}} 
\frac{(-1)^{d-\ell(\mu)}} 
{{\mathfrak{z}}(\mu)}\ {\delta_{\mu,\nu}},$$
where
$${\mathfrak z}(\mu) =  \prod_{i=1}^{\ell(\mu)} \mu_i \cdot 
|\text{Aut}(\mu)|.$$
Let $g^{\mu\nu}$ be the inverse matrix.

The notation $\eta([0])$ will be used to set all
weights to $[0]\in A^*_T(N_{p_i},{\mathbb Q} )$.
Since
$$[0]= s_1s_2 \in A^*_T(N_{p_i}, {\mathbb Q} ),$$
the weight choice has only a mild effect.

Following the
 notation of \cite{BryanP,lcdt},
the relative stable pairs partition function with
descendents,
\begin{equation*}
{\mathsf Z}^{N/S}_{d,\eta^1,\dots,\eta^r} 
\left(   \prod_{j=1}^k \tau_{i_j}(\gamma_{j})
\right)^T
 =\sum _{n\in \Z }q^{n}
\int _{[P_{n} (N/S,d)]^{vir}}
  \prod_{j=1}^k \tau_{i_j}(\gamma_{j})\ 
\prod_{i=1}^r \epsilon_i^*(\CC_{\eta^i}),
\end{equation*}
is well-defined for local curves.

\begin{thm}
\label{tnnn} 
$\ZZ_{d,\eta^1,\dots,\eta^r}
^{N/S}\big(   \prod_{j=1}^k \tau_{i_j}(\gamma_{j})
\big)^T$ is the 
Laurent expansion in $q$ of a rational function in $\mathbb{Q}(q,s_1,s_2)$.
\end{thm}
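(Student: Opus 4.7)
The strategy is to reduce Theorem \ref{tnnn} to two rational building blocks via the stable pairs degeneration formula of \cite{pt}: the capped 1-leg descendent vertex, whose rationality is the other central result of the paper and is established by a careful analysis of the poles of the bare descendent vertex under $\T$-equivariant localization, and the non-descendent theory of relative local curves, shown rational in \cite{moop, BryanP}. Throughout, I work in the ring $\Q(q,s_1,s_2)$ and use that rationality is preserved under finite sums, finite products, and substitutions dictated by the Nakajima gluing matrix $g^{\mu\nu}$.

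First I would degenerate the base curve $(C,p_1,\dots,p_r)$ to a nodal curve whose components are rational, chosen so that every descendent insertion point and every relative divisor sits on its own component. For $\gamma_j\in H^2(C)$ a point class, the insertion $\tau_{i_j}(\gamma_j)$ is moved onto its own cap $(\Pp,\infty)$; for $\gamma_j = 1\in H^0(C)$, a further degeneration separates the insertion onto an auxiliary cap where it acts on the partition function of the remaining geometry; for the odd classes $\gamma_j\in H^1(C)$ (the ``higher genus'' ingredient), pairs of symplectically dual odd insertions are peeled off onto genus-one bubbles obtained by pinching individual handles of $C$, and their contribution is packaged as an action of odd Nakajima operators on the Hilbert scheme of points of $\com^2$. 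The restriction of $N$ to each rational component is of the form $\O(k_1')\oplus \O(k_2')$, and by a further level-raising degeneration (via the splitting of the bundle into trivial-level pieces connected by tubes) one can reduce each cap to the trivial-level cap on which the full 3-torus acts, so that the capped 1-leg descendent vertex applies directly.

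Applying the degeneration formula then expresses $\ZZ^{N/S}_{d,\eta^1,\dots,\eta^r}\big(\prod \tau_{i_j}(\gamma_j)\big)^\T$ as a sum, for each $(n,d)$ finite, over gluing partitions $\lambda$ of $d$ of products of partition functions of the rational components. Each factor is now either (i) a capped 1-leg descendent vertex carrying a single insertion $\tau_i(p)$, rational by the main technical theorem of the paper, (ii) a non-descendent cap, tube, or pair-of-pants with prescribed bundle level and relative conditions, rational by \cite{moop, BryanP}, or (iii) a level-raising tube contributing a rational factor as well. The gluing coefficients $g^{\mu\nu}$ are themselves in $\Q(s_1,s_2)$. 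Summing over the (finitely many, for each $(n,d)$) partitions $\lambda$ and Laurent-expanding in $q$ yields a rational function in $\Q(q,s_1,s_2)$, as desired.

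The main obstacle is the rationality of the capped 1-leg descendent vertex itself, for which one must show that the apparently wild $s_3$-poles appearing in the equivariant localization of the bare descendent vertex (with respect to the 3-torus) are in fact cancelled after capping; this is where the ``detailed analysis of the poles of the descendent vertex'' alluded to in the abstract enters. A secondary but nontrivial difficulty is the rigorous handling of odd insertions in the degeneration formula, since they are not point-localized on $C$; one must verify that the odd-bubble contributions are themselves rational by reducing to a universality statement for odd Nakajima operators, and that they interact consistently with the gluing across nodes.
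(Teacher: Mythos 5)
Your high-level plan (degenerate onto caps, feed the capped 1-leg descendent vertex and the non-descendent relative theory into the degeneration formula) matches the paper's strategy for the \emph{stationary} part of the theorem: degenerating all $\tau_{i_j}(\mathsf{p})$ onto $(0,0)$-caps is exactly Lemma \ref{ht99}, and the level-raising reduction to $(0,0)$ geometries plus twisted caps is Lemma \ref{rtt5}. But there is a genuine gap in your treatment of descendents of the identity class $\mathsf{1}\in H^0(C)$. You cannot ``separate the insertion onto an auxiliary cap'': the class $\mathsf{1}$ restricts to the identity on \emph{every} component of a degeneration, so the degeneration formula produces a sum over all distributions of the identity insertions to the two sides (this is the sum over $J_1'\cup J_2'$ in Section \ref{depp}), and no choice of degeneration localizes $\tau_i(\mathsf{1})$ to a single component. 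This is precisely why the paper introduces the notion of $T$-depth and runs the two-step induction of Lemmas \ref{nndd} and \ref{p45}: one writes $\mathsf{1}=-[0]/s_3+[\infty]/s_3$ in the $\mathbf{T}$-equivariant theory, and descendents of the \emph{relative} point $[\infty]$ are then traded for tube invariants of one lower depth using the rubber calculus of Section \ref{rubc} (the cotangent-line comparison \eqref{dx} and the topological recursion for $\psi_0$). Your proposal contains no mechanism for this step, so it proves only the $T$-depth $0$ case, i.e.\ Proposition \ref{pnnn} restricted to $\gamma_j\in\{\mathsf{p}\}$ together with the odd classes.

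A related structural problem is that you invoke the rationality of the capped 1-leg descendent vertex (Theorem \ref{cnnn}) as an independent input. What the pole-cancellation analysis of Section \ref{polecan} actually delivers is only Proposition \ref{cttt}, the case of descendents of the non-relative point $[0]$. The full Theorem \ref{cnnn}, which allows descendents of $[\infty]$ and hence of $\mathsf{1}$, is proven \emph{simultaneously} with the tube/relative statements by the depth induction; quoting it as a black box to deduce Theorem \ref{tnnn} is therefore circular as written. Your sketch for the odd classes (peeling symplectically dual pairs onto pinched handles and appealing to a universality statement) is in the right spirit --- the paper reduces to the even theory via the formal argument of Section 5 of \cite{vir} using degeneration, monodromy invariance, and the elliptic vanishing relations --- but it too sits on top of the full even-class statement, including identity descendents, which your argument does not reach.
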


Theorem \ref{tnnn} implies Theorem \ref{onnn} by the degeneration formula.
The
proof of Theorem \ref{tnnn} uses the TQFT formalism exploited in
\cite{BryanP,lcdt} together with an analysis of
the capped 1-leg descendent vertex.

\subsection{Capped 1-leg descendent vertex} \label{legger}
The capped 1-leg geometry concerns 
the trivial bundle,
$$N = \cO_{\PP^1} \oplus \cO_{\PP^1} \rightarrow \PP^1\ ,$$ 
relative to the fiber
$$N_\infty \subset N$$
over $\infty \in \PP^1$.
Capped geometries have been studied (without descendents)
in \cite{moop}.

The total space $N$ naturally carries an action of a 
3-dimensional torus $$\mathbf{T} = T \times \com^*\ .$$
Here, $T$ acts as before by  scaling the
factors of $N$ and preserving the relative divisor $N_\infty$. 
The $\com^*$-action
on the base $\PP^1$ which fixes the points  $0, \infty\in \PP^1$ 
lifts to an additional $\com^*$-action on $N$ fixing
$N_\infty$.

The equivariant cohomology 
ring $H_{\mathbf{T}}^*(\bullet)$ is generated by
the Chern classes $s_1$, $s_2$, and $s_3$
of the standard representation of the three $\com^*$-factors.
We define 
\begin{equation}\label{pppw}
{\mathsf Z}^{\mathsf{cap}}_{d,\eta} 
\left(   \prod_{j=1}^k \tau_{i_j}(\gamma_{j})
\right)^{\mathbf{T}}
 =\sum _{n\in \Z }q^{n}
\int _{[P_{n} (N/N_\infty,d)]^{vir}}
  \prod_{j=1}^k \tau_{i_j}(\gamma_{j})\ 
\cup \epsilon_\infty^*(\mathsf{C}_{\eta}),
\end{equation}
by $\mathbf{T}$-equivariant residues.\footnote{The $T$-equivariant
series associated to the cap will be denoted  
$${\mathsf Z}^{\mathsf{cap}}_{d,\eta} 
\left(   \prod_{j=1}^k \tau_{i_j}(\gamma_{j})
\right)^T \ ,
$$
for $\gamma_j\in H^*(\Pp,\mathbb{Z})$.}
Here, $\gamma_j \in H^*_{\mathbf{T}}(\PP^1,\mathbb{Z})$.
By definition, the partition function \eqref{pppw} is
a Laurent series in $q$ with coefficients in the field
$\mathbb{Q}(s_1,s_2,s_3)$.

\begin{thm}
\label{cnnn} 
$
{\mathsf Z}^{\mathsf{cap}}_{d,\eta} 
\left(   \prod_{j=1}^k \tau_{i_j}(\gamma_{j})
\right)^{\mathbf{T}}$
is the 
Laurent expansion in $q$ of a rational function in 
$\mathbb{Q}(q,s_1,s_2,s_3)$.
\end{thm}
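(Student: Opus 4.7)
The plan is to combine $\com^*$-virtual localization on the base $\Pp$ with the pole analysis of the bare $1$-leg descendent vertex developed in the body of the paper, closing via an induction on the total descendent degree seeded by the primary case of \cite{moop}.

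First, decompose each $\gamma_j\in H^*_{\T}(\Pp)$ into its $\T$-fixed components at $0$ and $\infty$; by linearity it suffices to treat the case in which each insertion is of the form $\tau_i(\mathbf{1})$ or $\tau_i([0])$. Apply virtual localization with respect to the $\com^*$-factor of $\T$ scaling $\Pp$. The $\com^*$-fixed loci of $P_n(N/N_\infty,d)$ split as (a) pairs set-theoretically supported on the fiber $N_0$, producing the bare $\T$-equivariant $1$-leg descendent vertex $\bW^{\mathsf{vert}}_\mu$, and (b) a rubber chain of destabilizations at $\infty$ matching the vertex through an intermediate partition $\lambda\vdash d$ and carrying the boundary condition $\eta$ at the distinguished divisor, producing a rubber integral $\bW^{\sim}_{\lambda,\eta}$. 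Gluing via the Nakajima pairing gives the factorization
\[
\ZZ^{\mathsf{cap}}_{d,\eta}\!\left(\prod_{j=1}^k \tau_{i_j}(\gamma_j)\right)^{\!\T}
\;=\; \sum_{\mu,\lambda\vdash d}\, \bW^{\mathsf{vert}}_\mu\cdot g^{\mu\lambda}\cdot \bW^{\sim}_{\lambda,\eta},
\]
in which the descendents are distributed to the vertex and the rubber according to their fixed-point support.

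Next, observe that the rubber factor $\bW^{\sim}_{\lambda,\eta}$ is a rational function of $(q,s_1,s_2,s_3)$. After the standard $\psi$-rigidification of the rubber moduli, the descendent integrals push forward to $\Hilb(\com^2,d)$ as explicit operators on the $\T$-equivariant Fock space, and their rationality follows from the Okounkov--Pandharipande description of the Hilbert scheme TQFT combined with the primary rationality results of \cite{moop}.

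The main obstacle is then rationality of the full sum, since the bare vertex $\bW^{\mathsf{vert}}_\mu$ is, a priori, only a formal Laurent series in $q$ with coefficients in $\Q(s_1,s_2,s_3)$. This is where the pole analysis of the descendent vertex enters: one shows that the non-rational $q$-tail of $\bW^{\mathsf{vert}}_\mu$ has a controlled shape in the $s_i$ so that pairing with $g^{\mu\lambda}\bW^{\sim}_{\lambda,\eta}$ annihilates it. The argument proceeds by induction on $\sum_j i_j$. The base case $i_j=0$ for all $j$ is the capped primary rationality of \cite{moop}. For the inductive step, one uses equivariant Chern-class manipulations together with a degeneration of $(N,N_\infty)$ to express a degree-$i$ descendent as a sum of primaries plus descendents of strictly smaller total degree, with the error term arranged so that its pairing with the rubber lands in the rational subspace by the inductive hypothesis. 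Controlling this error term uniformly in $\mu$ is the hardest point and is precisely what the detailed pole analysis of the descendent vertex in the body of the paper is designed to deliver.
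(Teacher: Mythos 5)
Your starting point is right: the $\mathbf{T}$-equivariant localization of the cap into a descendent vertex over $0$, an edge factor, and a rubber factor over $\infty$ is exactly the formula \eqref{fred} used in the paper. But the mechanism you propose for taming the infinite $q$-series of the bare vertex is not the one that works, and the step you lean on does not exist. You propose an induction on $\sum_j i_j$ whose inductive step ``expresses a degree-$i$ descendent as a sum of primaries plus descendents of strictly smaller total degree.'' No such reduction is available: the companion paper \cite{parttwo} produces reduction rules only for \emph{stationary} descendents in the $T$-equivariant theory, only down to $\tau_{i\leq d}(\mathsf{p})$ rather than to primaries, and the authors explicitly remark that even the stationary $\mathbf{T}$-equivariant cap is not accessible that way. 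Likewise, your claim that the ``non-rational $q$-tail'' of $\bW^{\mathsf{Vert}}_\mu$ is annihilated upon pairing with the rubber is not what the pole analysis of Section \ref{polecan} delivers and is not how the series is controlled.

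What the paper actually does for the descendents of $[0]$ is: (1) prove a priori (Lemma \ref{rq2}, via a proper Hilbert--Chow type pushforward to $\oplus_1^{d}(\com^2)$) that each coefficient of the series is a \emph{polynomial} in $s_3$ of bounded degree $\delta$; (2) evaluate the localization formula at $s_3=\frac{1}{a}(s_1+s_2)$ for every integer $a>0$ --- Theorem \ref{canpole} shows the vertex then collapses to a Laurent \emph{polynomial} in $q$ with coefficients in $\Q(s_1,s_2)$, while Lemmas \ref{plle} and \ref{hyy3} show the rubber factor evaluates to a rational function; (3) recover the full $s_3$-dependence from the finitely many polynomial coefficients by Vandermonde interpolation over the infinitely many values of $a$. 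Your proposal contains none of (1)--(3). Separately, your treatment of insertions supported at $\infty$ (equivalently of $\tau_i(\mathsf{1})$) is a genuine gap: distributing such descendents ``to the rubber'' requires proving rationality of rubber integrals with descendent insertions, which the paper obtains only through the alternating cap/tube depth induction of Sections \ref{depp}--\ref{444} (Lemmas \ref{nndd} and \ref{p45}) using the rubber calculus relation \eqref{dx}. That induction is a substantial half of the proof of Theorem \ref{cnnn}, not something subsumed by the factorization formula you write down.
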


Theorem \ref{cnnn} is the main contribution of the paper. 
The result relies upon
a delicate cancellation of
poles in the vertex formula of \cite{pt2}
for stable pairs invariants.
Theorem \ref{tnnn} is derived as a consequence.

\subsection{Stationary theory}
In \cite{parttwo}, we prove reduction rules for
stationary descendents in the 
$T$-equivariant local theory of curves. Let 
$\mathsf{p}\in H^2(C,\mathbb{Z})$ be the class of a point on a
nonsingular curve $C$. The stationary descendents are 
$\tau_i(\mathsf{p})$.
For the degree $d$ local theory of $C$, we
find universal formulas expressing the descendents 
$\tau_{i>d}(\mathsf{p})$ in terms of the descendents
$\tau_{i\leq d}(\mathsf{p})$.
The reduction rules provide an alternative (and more effective)
approach 
 to
the rationality of Theorem \ref{tnnn} in the 
stationary case.

The exact calculation in \cite{parttwo} of the basic stationary
descendent series
$$\mathsf{Z}^{\mathsf{cap}}_{d,(d)}( \tau_d(\mathsf{p}))^T =
\frac{q^d}{d!}\left(\frac{s_1+s_2}{s_1s_2}\right)
\frac{1}{2}\sum_{i=1}^d  \frac{ 1+(-q)^{i}}{1-(-q)^i} \ $$
plays a special role.
The coefficient of $q^d$, 
$$ \left\langle \tau_d, (d) \right\rangle_{\text{Hilb}(\com^2,d)}=
\frac{1}{2\cdot (d-1)!} \left(\frac{s_1+s_2}{s_1s_2}\right),$$
is the classical $T$-equivariant pairing on the
Hilbert scheme of $d$ points in $\C^2$.

The $T$-equivariant stationary descendent theory is simpler
than the full descendent theories studied here. We do not know an
alternative approach to the rationality of the
full $T$-equivariant descendent theory
of local curves. Even the rationality of the  $\mathbf{T}$-equivariant
stationary theory of the cap does not appear to
be accessible via \cite{parttwo}.

The methods of \cite{parttwo} also prove a functional equation for 
the partition function for stationary descendents which is 
a special case of the following conjecture we make here.

\begin{conj}
\label{33345} 
Let $\ZZ_{d,\eta^1,\dots,\eta^r}
^{N/S}\big(   \prod_{j=1}^k \tau_{i_j}(\gamma_{j})
\big)^T$ be the Laurent expansion in $q$ of 
$F(q,s_1,s_2) \in \mathbb{Q}(q,s_1,s_2)$. Then, $F$
satisfies the functional equation
\[
F(q^{-1},s_2,s_2) = (-1)^{\Delta+|\eta|-\ell(\eta)
+
\sum_{j=1}^k
i_j}q^{-\Delta}F(q,s_1,s_2),
\]
where the constants are defined by 
$$\Delta = \int_{\beta}c_1(T_N),\ \ \ 
|\eta|=\sum_{i=1}^r |\eta^i|,\ \ \  \text{and} \ \ \
\ell(\eta)=\sum_{i=1}^r \ell(\eta^i) 
\ .$$
\end{conj}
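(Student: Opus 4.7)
The plan is to reduce the functional equation to the capped 1-leg descendent vertex via the TQFT formalism used in the proof of Theorem \ref{tnnn}, and then to prove the corresponding $q\mapsto q^{-1}$ symmetry at the level of the vertex. By the gluing rules of \cite{BryanP,lcdt}, every partition function
$$\ZZ^{N/S}_{d,\eta^1,\ldots,\eta^r}\!\left(\prod_{j=1}^k \tau_{i_j}(\gamma_j)\right)^{\!T}$$
is a matrix contraction of a small list of basic pieces: the level $(-1,0)$ and $(0,-1)$ twist caps, the trivial pair of pants, and the capped descendent vertex that absorbs all of the $\tau_{i_j}(\gamma_j)$ insertions. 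The exponents $\Delta$, $|\eta|-\ell(\eta)$, and $\sum_j i_j$ in the predicted sign are additive under this gluing, and the $q^{-\Delta}$ scaling is multiplicative across pieces; combined with the known functional equations for the non-descendent building blocks (which follow from their explicit diagonalizations in the Nakajima basis via Macdonald polynomials, cf.\ \cite{moop,lcdt}), the conjecture reduces to its analogue for the capped descendent series $\ZZ^{\mathsf{cap}}_{d,\eta}(\prod_j \tau_{i_j}(\gamma_j))^T$.

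For the capped descendent vertex I would proceed along two complementary lines. First, the methods of \cite{parttwo} already establish the functional equation on the stationary slice $\gamma_j = \mathsf{p}$, together with reduction rules expressing $\tau_{i>d}(\mathsf{p})$ in terms of lower stationary descendents; this pins down the symmetry on a substantial portion of the insertions. Second, for general $\gamma_j$ I would analyze the Pandharipande-Thomas vertex formula \cite{pt2} directly. The natural candidate for the symmetry is a ``$3$d-partition complement'' involution on the vertex terms, analogous to the one responsible for the descendent-free DT/PT functional equation, extended to act on the descendent factors $\ch_{2+i}(\FF)$. Under this involution, the Chern character component $\ch_{2+i}$ of the dualized universal sheaf is expected to pick up a sign of $(-1)^i$, coming from the parity of $\ch$ under $F \mapsto F^\vee$ (Grothendieck-Serre duality on the complex $[\OO \to F]$), and this accounts for the $(-1)^{\sum_j i_j}$ in the predicted exponent.

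The main obstacle will be controlling this involution in the presence of the pole cancellations that underlie the rationality of the capped descendent vertex (Theorem \ref{cnnn}). Individual vertex terms are singular in the edge variables and the symmetry cannot hold termwise; one must instead work with the cancelled rational form, which means combining the involution with the residue calculus of the pole analysis used to prove Theorem \ref{cnnn}. A sensible intermediate step is to prove the functional equation first in the $\mathbf{T}$-equivariant setting, where all three weights $s_1,s_2,s_3$ are independent and the vertex combinatorics is cleanest, and then specialize $s_3$ to the cotangent weight of $\PP^1$ to obtain the $T$-equivariant statement. A further subtlety is the treatment of odd-degree insertions $\gamma_j \in H^1(C,\Z)$, which contribute anticommuting factors through the TQFT gluing; the fact that their cohomological degree does not appear in the predicted sign is a nontrivial consistency check that must be verified by tracking these factors carefully through the reduction to the cap.
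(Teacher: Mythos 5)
The statement you are trying to prove is Conjecture \ref{33345}, which the paper does \emph{not} prove: it is posed as an open conjecture, with the only supporting evidence being the stationary case $\gamma_j = \mathsf{p}$ (Theorem 2 of \cite{parttwo}) and the descendent-free case \cite{moop}. So there is no proof in the paper to compare yours against, and the real question is whether your proposal closes the gap the authors left open. It does not. Your reduction step (gluing the local curve geometry out of caps, tubes, and pairs of pants, with additivity of $\Delta$, $|\eta|-\ell(\eta)$, $\sum_j i_j$ and multiplicativity of $q^{-\Delta}$ across the contraction) is the standard TQFT bookkeeping and is plausible, though you would still need to check that the gluing factor $g^{\mu\widehat{\mu}}/q^d$ transforms consistently under $q\mapsto q^{-1}$ so that the boundary signs cancel in pairs. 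But this reduction only relocates the problem to the capped descendent vertex, and there your argument stops being a proof: the ``$3$d-partition complement involution'' extended to the descendent factors is described only as a ``natural candidate,'' the sign $(-1)^i$ from $\ch_{2+i}$ under dualization is ``expected,'' and you yourself identify the fatal obstacle --- the symmetry cannot hold termwise on the vertex sum because individual terms are singular, so the involution must be made compatible with the pole cancellations of Section \ref{polecan}. No mechanism for doing this is given. This is precisely the hard content of the conjecture, and it is the part that neither you nor the paper supplies.

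A secondary concern: your appeal to an involution ``analogous to the one responsible for the descendent-free DT/PT functional equation'' presupposes that the descendent-free functional equation is proven by a termwise vertex symmetry. In \cite{moop} it is not; it is obtained through rationality and the structure of the answer, not through a box-counting involution on equivariant vertex terms. For stable pairs the fixed loci are modules $Q_U\subset M_3/\langle 1\rangle$ rather than box complements, and no such involution is known. So even the template you propose to imitate does not exist in the form you assume. Your proposal is a reasonable research program --- and the dualization heuristic for the sign $(-1)^{\sum_j i_j}$ is a genuinely useful consistency check --- but it is not a proof, and the statement remains a conjecture.
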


Here, $T_N$ is the tangent bundle of the 3-fold $N$, and
$\beta$ is the curve class given by $d$ times the $0$-section.
We believe the straightforward generalization of Conjecture \ref{33345}
to all descendent partition functions for the stable
pairs theories of relative 3-folds (equivariant and non-equivariant) holds.
If there are no descendents, the functional equation is known
to hold in the toric case \cite{moop}.
The strongest evidence with descendents
  is the stationary result of Theorem 2
of \cite{parttwo}.

\subsection{Denominators}

The descendent partition 
functions for the stable pairs theory of local
curves have very restricted
denominators when considered as rational functions in $q$
with coefficients in $\Q(s_1,s_2)$ for Theorems \ref{onnn}-\ref{tnnn}
and rational functions in $q$
with coefficients in $\Q(s_1,s_2,s_3)$ for Theorem \ref{cnnn}.
\begin{conj}
\label{222} 
The denominators of the degree $d$ descendent partition functions
$\ZZ$ of Theorems \ref{onnn}, \ref{tnnn}, and \ref{cnnn}
are  products of factors of the form $q^k$ and
$$1-(-q)^r$$
for $1\leq r \leq d$.
\end{conj}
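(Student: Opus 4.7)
The plan is to refine the rationality proofs of Theorems \ref{onnn}--\ref{cnnn} by tracking denominators carefully. I would first establish the denominator bound for the capped 1-leg descendent vertex (Theorem \ref{cnnn}), then propagate it through the TQFT gluing and degeneration steps that derive Theorems \ref{onnn} and \ref{tnnn} from Theorem \ref{cnnn}.

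Step one is a refinement of the proof of Theorem \ref{cnnn}. The vertex formalism of \cite{pt2} expresses the capped descendent vertex as a sum of $\mathbf{T}$-equivariant contributions indexed by box configurations: a finite ``core'' sitting over $0\in\PP^1$ together with an outgoing infinite cylinder (the ``leg'') of 3-dimensional partition profile $\eta$ of size $d$. The $q$-dependence comes entirely from the leg: summing the box contributions along the infinite cylinder produces geometric series whose denominators are products of factors of the form $1-(-q)^{r}$, where $r$ runs over multiplicities arising in the leg profile. Since the leg has total size $d$, any such multiplicity satisfies $r\leq d$, and powers of $q$ are the only other denominators that can appear from the lowest-box normalization of the vertex. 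The $s_i$-pole cancellation established in the proof of Theorem \ref{cnnn} does not alter this $q$-denominator structure; it only recombines the coefficients. Hence the capped 1-leg descendent vertex has denominators of the conjectured form.

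Step two is to upgrade to the tube and to pairs of pants. In the Hilbert scheme TQFT, the tube is (after Nakajima-basis normalization) the identity modulo $q$, so its inverse in the gluing formula has determinant with denominator a product of $q^k$ and $(1-(-q)^r)$ with $r\leq d$. This follows by a Cramer's-rule analysis using that the off-diagonal entries of the tube are $q$-times rational functions of the same controlled denominator type, established at the non-descendent level in \cite{moop} and extended here via Step one applied to two caps glued together. Together with the capped descendent vertex, this yields Theorem \ref{tnnn} with the required denominators: the TQFT expression is a finite sum of products of caps (carrying the descendent insertions) and tube inverses (with controlled denominators), and $1\leq r\leq d$ is preserved because the degree along every irreducible component is at most $d$.

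Step three is the degeneration formula applied to Theorem \ref{tnnn} to obtain Theorem \ref{onnn}; this is a finite sum indexed by splittings of cohomology and partitions of $d$, and each summand already has denominator of the required form.

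The main obstacle is Step two: the matrix inversion in the TQFT can, a priori, introduce poles at the zeros of the determinant of the tube, and those zeros must be shown to lie only at $q=0$ or at $(-q)^r=1$ with $r\leq d$. The determinant control requires either an explicit calculation (feasible because the tube is a finite-rank operator on $A^*_T(\mathrm{Hilb}(\com^2,d))$) or a clever identification of the determinant via the quantum multiplication of \cite{moop}. A secondary difficulty is ensuring that descendent insertions, which enter through the capped vertex rather than the tube, do not generate $q$-poles beyond those of the non-descendent theory; the vertex analysis of Step one is designed precisely to rule this out.
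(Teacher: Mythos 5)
There are genuine gaps here, and the proposal also overreaches on scope. Note first that the paper only establishes this conjecture for descendents of \emph{even} cohomology (Theorem \ref{2222}): the reduction of odd-cohomology descendents via Section 5 of \cite{vir} requires matrix inversions whose denominators cannot currently be controlled. Your proposal claims the full statement but never engages with the odd classes, so at best it could address the even case. Within the even case, your Step one is the serious problem. You assert that the $q$-dependence of the capped descendent vertex arises from ``geometric series along the leg'' with denominators $1-(-q)^r$, $r\le d$. This is unsubstantiated and is essentially the conclusion you need for the vertex factor: the sum over box configurations $Q_U$ is weighted by the descendent factors $\mathrm{ch}_{2+i_j}\big(\FFF_U\cdot(1-t_1)(1-t_2)(1-t_3)\big)$, which destroys any product or geometric-series structure, and the rationality in $q$ of $\bW^{\mathsf{Vert}}_\mu$ is not proven term-by-term in the paper at all. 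What is actually proven (Theorem \ref{canpole}) is stronger and of a different shape: at the evaluations $s_3=\frac{1}{a}(s_1+s_2)$ the vertex is a Laurent \emph{polynomial} in $q$, so it contributes no $q$-denominators whatsoever. The factors $1-(-q)^r$ enter instead through the rubber term $\mathsf{S}^\mu_\eta$, which is a fundamental solution of the quantum differential equation of $\mathrm{Hilb}(\mathbb{C}^2,d)$ and hence has singularities in $-q$ only at $0$ and $r$-th roots of unity for $r\le d$ \cite{hilb2}. Your proposal locates the denominators in the wrong factor of the localization formula.

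Your Step two correctly identifies the matrix inversion as the main obstacle but then leaves it open, proposing a determinant analysis of the tube that you do not carry out; a determinant being $1+O(q)$ gives invertibility as a power series but says nothing about where its zeros lie. The paper avoids this entirely: it replaces the full-rank matrix $M_d$ of Lemma \ref{rtt5} by the relative/descendent correspondence matrix of Proposition \ref{gbb5}, whose entries $\mathsf{Z}^{\mathsf{cap}}_{d,\lambda}(\tau_{\mu_1-1}([0])\cdots\tau_{\mu_{\ell(\mu)}-1}([0]))^T$ form a matrix triangular with respect to the length partial order with \emph{monomial} diagonal entries in $q$. Its inverse therefore manifestly preserves the allowed denominator form, with no determinant estimate needed. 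If you want to salvage your outline, you should (i) restrict to even cohomology, (ii) replace the geometric-series claim by the Laurent polynomiality of Theorem \ref{canpole} together with the quantum differential equation input for $\mathsf{S}^\mu_\eta$, and (iii) replace the Cramer's-rule step by the triangular correspondence matrix.
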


In other words, the poles in $-q$ are conjectured to occur
only at 0 and
 $r^{th}$ roots for $r$ at most $d$ (and have no
dependence on the variables $s_i$). 
Conjecture \ref{222} is proven in Theorem \ref{2222} of
 Section \ref{ennd}
for descendents
of even cohomology. 
The denominator restriction yields new
results about the 3-point functions of the Hilbert scheme of
points of $\mathbb{C}^2$ stated as a Corollary to Theorem \ref{2222}.

\subsection{Descendent theory of toric 3-folds}
Calculation of the descendent theory of stable pairs on  nonsingular
toric 3-folds requires
 knowledge of the capped 3-leg descendent vertex.{\footnote{The
capped 2-leg descendent vertex is, of course, a specialization
of the 3-leg vertex.}}
The rationality of the capped 3-leg descendent vertex is proven
in \cite{part3} via a geometric reduction to the 1-leg case
of Theorem \ref{cnnn}. 
As a result, Conjecture \ref{111} is
established for all nonsingular toric 3-folds. The rationality
of the descendent theory of
several log Calabi-Yau geometries 
is also proven in \cite{part3}.

\subsection{Plan of the paper}

After a brief review of the theory of 
stable pairs in Section \ref{ooo}, the
vertex formalism of \cite{pt2} is summarized in
Section \ref{ttt}. 
The proof of Theorem \ref{cnnn} is presented in Section \ref{333}
for descendents of the nonrelative $\mathbf{T}$-fixed point 
$0\in \PP^1$ modulo the
pole cancellation property established
in Section \ref{polecan}.
Depth and the rubber calculus for stable pairs of local curves
are discussed in Sections \ref{depp} and \ref{rubc}.
The full statement of Theorem \ref{cnnn}
 is obtained in  Section
\ref{444}. In fact,
the rationality of the $\mathbf{T}$-equivariant descendent
theories of all twisted caps and tubes is established
in Section \ref{444}.
Theorems \ref{onnn} and \ref{tnnn} are proven as a 
consequence of Theorem \ref{cnnn} in Section \ref{555} using the 
methods of \cite{BryanP,vir,lcdt}. Denominators
are studied in Section \ref{ennd}.

\subsection{Other directions}
Whether parallel results can be obtained for the
local Gromov-Witten theory of curves \cite{BryanP} is
an interesting question. Although conjectured to be equivalent,
the descendent theory of stable pairs on $3$-folds appears more accessible
than descendents in Gromov-Witten theory. The direct vertex analysis undertaken here for
Theorem \ref{cnnn} must be replaced 
in Gromov-Witten theory  with a deeper
understanding of Hodge integrals \cite{FP}.

Another advantage of stable pairs, at least for Calabi-Yau
geometries, is the possibility of using 
motivic integrals with respect to Beh\-rend's $\chi$-function \cite{Beh},
see \cite{pt3} for an early use. Recently, D. Maulik and R. P. Thomas
have been pursuing $\chi$-functions in the log Calabi-Yau setting.
Applications to the rationality of descendent series in
Fano geometries might be possible.

A principal motivation of studying descendents for stable
pairs is the perspective of \cite{mptop}.  
Descendents constrain relative invariants. With the
degeneration formula, the possibility emerges of
studying stable pairs on arbitrary (non-toric) 3-folds.

\subsection{Acknowledgements}
Discussions with J. Bryan, D. Maulik, A. Oblomkov, A. Okounkov, and
R.~P. Thomas 
about the stable pairs vertex, self-dual obstruction theories,
and rationality
played an important role. We thank M. Bhargava and M. Haiman
for conversations related to the pole cancellation of
Section \ref{polecan}.
The study of descendents
for 3-fold sheaf theories in \cite{MNOP2,pt2} 
motivated several 
aspects of the paper. 
 
R.P. was partially supported by NSF grants DMS-0500187
and DMS-1001154.
A.P. was supported by a NDSEG graduate fellowship.
The paper was completed in the summer of 2010
while visiting the 
Instituto Superior T\'ecnico in Lisbon where
R.P. was supported by a Marie Curie fellowship and
a grant from the Gulbenkian foundation.


\section{Stable pairs on $3$-folds}
\label{ooo}
\subsection{Definitions}
Let $X$ be a nonsingular quasi-projective $3$-fold over $\mathbb{C}$
with 
polarization $L$.
Let $\beta\in H_2(X,\mathbb{Z})$ be a nonzero class.
The moduli space $P_n(X,\beta)$ parameterizes \emph{stable pairs}
\begin{equation}\label{vqq2}
\OO_X \stackrel{s}{\rightarrow} F
\end{equation}
where $F$ is a sheaf with Hilbert polynomial
$$ \chi(F\otimes L^k) = k\int_\beta c_1(L) + n$$
 and $s\in H^0(X,F)$ is a section. 
The two stability conditions are: 
\begin{enumerate}
\item[(i)]
the sheaf $F$ is {pure} with proper support, 
\item[(ii)] the section $\OO_X \stackrel{s}{\rightarrow} F$ has 0-dimensional
cokernel.
\end{enumerate}
By definition, {\em purity} (i) means
every nonzero
subsheaf of $F$ has support of dimension 1 \cite{HLShaves}. In particular,
 purity implies  the (scheme-theoretic) 
support $C_F$ of $F$ is a Cohen-Macaulay curve. 
A quasi-projective moduli space of stable pairs
 can be constructed by  a standard GIT analysis of Quot scheme
quotients \cite{LPPairs1}.

For convenience, we will often refer to the stable pair 
\eqref{vqq2} on $X$ simply by $(F,s)$.

\subsection{Virtual class}
A central result of \cite{pt} is the construction of a
virtual class on $P_n(X,\beta)$.
The standard approach to the deformation theory of pairs
fails to yield an appropriate 2-term deformation theory
for $P_n(X,\beta)$.
Instead, $P_n(X,\beta)$ is viewed in \cite{pt}
as a moduli space
of complexes in the derived category.

Let $D^b(X)$ be the bounded derived category of coherent
sheaves on $X$.
Let
$${I}\udot = \left\{ \OO_X \rightarrow F \right\}\in
D^b(X)$$
be the complex determined by a stable pair.
The tangent-obstruction theory obtained by deforming ${I}\udot$
in $D^b(X)$ while fixing its determinant is 2-term and governed by the 
groups{\footnote{The subscript 0 denotes traceless $\Ext$.}}
$$\Ext^1({I}\udot, {I}\udot)_0, \ \
\Ext^2({I}\udot, {I}\udot)_0.$$
The virtual class 
$$[P_n(X,\beta)]^{vir} \in A_{\text{dim}^{vir}}
\left(P_n(X,\beta),\mathbb{Z}\right)$$
is then obtained by standard methods \cite{BehFan,LiTian}.
The virtual dimension is
$$\text{dim}^{vir} = \int_\beta c_1(T_X).$$

Apart from the derived category deformation theory,
the construction of the virtual class of $P_n(X,\beta)$
is parallel to virtual class construction in DT theory \cite{Thomas}.

\subsection{Characterization}
Consider the kernel/cokernel exact sequence associated to a stable
pair $(F,s)$,
\beq \label{IOFQ}
0\to\I_{C_F}\to\OO_X\Rt{s}F\to Q\to0.
\eeq
The kernel is the ideal sheaf of the Cohen-Macaulay support
curve $C_F$ by Lemma 1.6 of \cite{pt}. The cokernel
$Q$ has dimension 0 support by stability.
The {\em reduced} support scheme, $\text{Support}^{red}(Q)$, is 
called the {\em zero locus} of the pair.
The zero locus lies on $C_F$.

Let $C\subset X$ be a fixed Cohen-Macaulay curve.
Stable pairs with support  $C$ and bounded zero locus are characterized
as follows.
Let $$\m\subset\OO_C$$ 
be the ideal in $\OO_C$ of a 0-dimensional subscheme. 
Since $$\hom(\m^r/\m^{r+1},\OO_C)=0$$ by
the purity of $\OO_C$, we obtain an inclusion $$\hom(\m^r,\OO_C)\subset
\hom(\m^{r+1},\OO_C).$$ 
The inclusion $\m^r\into\OO_C$ induces a canonical section 
$$\OO_C\into\hom(\m^r,\OO_C).$$

\begin{prop} \label{descl}
A stable pair $(F,s)$ with support $C$ satisfying
$$\text{\em Support}^{red}(Q) \subset \text{\em Support}(\OO_C/\m)$$ 
is equivalent to a subsheaf
of $\hom(\m^r,\OO_C)/\OO_C,\ r\gg0.$
\end{prop}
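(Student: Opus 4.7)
The plan is to realize $F$ as an extension of $\OO_C$ by a $0$-dimensional subsheaf of $\hom(\m^r,\OO_C)/\OO_C$ and then check that this correspondence goes both ways, using purity of $\OO_C$ as the fundamental tool.

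Starting from a stable pair $(F,s)$ with support $C$, the section $s$ factors as $\OO_X\twoheadrightarrow\OO_C\hookrightarrow F$ (the kernel of $s$ is $\I_{C_F}=\I_C$ by Lemma 1.6 of \cite{pt}, and stability forces $\OO_C\hookrightarrow F$ since the cokernel is $0$-dimensional while $\OO_C$ is pure). This gives a short exact sequence
\[
0\to\OO_C\to F\to Q\to 0
\]
with $Q$ supported in dimension $0$ on $\text{Support}(\OO_C/\m)$. Since $\m$ is the ideal of a $0$-dimensional subscheme containing $\text{Support}^{red}(Q)$, for $r\gg 0$ we have $\m^r Q=0$, and therefore $\m^r F\subset\OO_C$ as subsheaves of $F$. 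I would then define a map
\[
\varphi\colon F\longrightarrow\hom(\m^r,\OO_C),\qquad f\longmapsto (m\mapsto mf),
\]
which is well-defined because $mf\in\m^r F\subset\OO_C$ for $m\in\m^r$. The key point is that $\varphi$ is injective: the kernel of $\varphi$ is the subsheaf of $F$ annihilated by $\m^r$, which has $0$-dimensional support and hence vanishes by purity of $F$. Moreover $\varphi$ restricted to $\OO_C\subset F$ agrees with the canonical section $\OO_C\hookrightarrow\hom(\m^r,\OO_C)$ induced by $\m^r\hookrightarrow\OO_C$, so quotienting gives an embedding $Q\hookrightarrow\hom(\m^r,\OO_C)/\OO_C$.

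For the converse, given any subsheaf $\overline{Q}\subset\hom(\m^r,\OO_C)/\OO_C$, I pull back the canonical extension
\[
0\to\OO_C\to\hom(\m^r,\OO_C)\to\hom(\m^r,\OO_C)/\OO_C\to 0
\]
along the inclusion $\overline{Q}\hookrightarrow\hom(\m^r,\OO_C)/\OO_C$ to obtain $0\to\OO_C\to F\to\overline{Q}\to 0$ with $F\subset\hom(\m^r,\OO_C)$, and then compose with $\OO_X\twoheadrightarrow\OO_C$ to produce the section $s$. The two constructions are inverse to each other up to the evident identifications, provided one verifies the inclusions $\hom(\m^r,\OO_C)\subset\hom(\m^{r+1},\OO_C)$ are compatible and independent of $r$ for $r$ large (using that $\m^rQ=0$).

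The main thing to check carefully is that the sheaf $F$ produced by the inverse construction is actually pure of dimension $1$, so that $(F,s)$ is a genuine stable pair. For this I would argue that $\hom(\m^r,\OO_C)$ itself is pure: any local section killed by a $0$-dimensional ideal would define an $\OO_C$-linear map $\m^r\to\OO_C$ whose image is annihilated by that ideal, hence is $0$ by purity of $\OO_C$, hence the section is $0$. Then $F\subset\hom(\m^r,\OO_C)$ inherits purity, and stability of $(F,s)$ follows because $\text{coker}(s)=\overline{Q}$ is $0$-dimensional by construction. This purity step is the one place where the Cohen--Macaulay hypothesis on $C$ does real work, and it is the pivot of the whole argument.
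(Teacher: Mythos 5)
Your proof is correct and takes essentially the same route as the argument this paper relies on: the statement is deferred to \cite{pt} (Proposition 1.8 there), where the equivalence is established in just this way --- the multiplication map $F\to\hom(\m^r,\OO_C)$ using $\m^rQ=0$, injectivity from purity of $F$, compatibility with the canonical section, and purity of $\hom(\m^r,\OO_C)$ (deduced from purity of $\OO_C$) to show the pulled-back extension is again a stable pair. No gaps.
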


Alternatively, we may work with coherent subsheaves of the quasi-coherent sheaf
\begin{equation}\label{infhom}
\lim\limits_{\To}\hom(\m^r,\OO_C)/\OO_C
\end{equation}
Under the equivalence of Proposition \ref{descl}, the 
subsheaf of \eqref{infhom} corresponds to $Q$, giving a subsheaf $F$ of
$\lim\limits_{\To}\hom(\m^r,\OO_C)$ containing the canonical subsheaf $\OO_C$
and the sequence
$$0\to\OO_C\stackrel{s}{\rightarrow} F \rightarrow Q \rightarrow 0.$$
 Proposition \ref{descl} is proven in \cite{pt}.


\section{$\T$-fixed points with one leg} \label{ttt}

\subsection{Affine chart}
Let $N$ be the 3-fold total space of
$$\OO_{\PP^1} \oplus \OO_{\PP^1} \rightarrow \PP^1 \ $$
carrying the action of the 3-dimensional torus $\T$ as in Section 
\ref{legger}.
Let
\begin{equation}\label{vqaa}
[\OO_N \stackrel{s}{\rightarrow} F] \in P_n(N,d)^\T
\end{equation}
be a $\T$-fixed stable pair. The curve class is
$d[\PP^1]$.

Let $U\subset N$ be the $\T$-invariant affine chart
associated to the 
 $\T$-fixed point of $N$ lying over $0\in \PP^1$.
The restriction of the stable pair \eqref{vqaa} to 
the chart $U$,
\begin{equation}\label{vvvt}
\OO_{U} \stackrel{s_U}{\rightarrow} F_U\ ,
\end{equation}
determines an invariant section $s_U$ of an
equivariant sheaf $F_U$.

Let $x_1,x_2,x_3$ be coordinates on the affine chart $U$
in which the $\T$-action takes the diagonal form,
$$(t_1,t_2,t_3) \cdot x_i = t_i x_i.$$
By convention, $x_1$ and $x_2$ are coordinates on the fibers of
$N$ and $x_3$ is a coordinate on the base $\PP^1$.

We will characterize the restricted data $(F_U,s_U)$
in the coordinates $x_i$ closely following the presentation
of \cite{pt2}. 


\subsection{Monomial ideals and partitions}
Let $x_1,x_2$ be coordinates on the plane 
$\C^2$.
A subscheme $S\subset \C^2$ invariant under the 
action of the diagonal torus,
$$(t_1,t_2)\cdot x_i = t_ix_i$$
must be defined by a monomial ideal
$\I_S \subset \C[x_1,x_2]$.
If
$$\dim_\C \C[x_1,x_2]/\I_S < \infty$$
then $\I_S$ determines a finite partition $\mu_S$
by considering lattice points corresponding
to monomials of $\C[x_1,x_2]$
{\em not} contained in $\I_S$. 
Conversely, each partition $\mu$ determines a monomial ideal
$$\mu[x_1,x_2]\subset \C[x_1,x_2].$$

Similarly, the subschemes $S\subset \C^3$ invariant under
the diagonal $\T$-action are in bijective correspondence with
$3$-dimensional partitions.

\subsection{Cohen-Macaulay support}
The first step in the characterization of the restricted
data
\eqref{vvvt}
is to determine the scheme-theoretic support  
$C_U$ of $F_U$. If nonempty, $C_U$ is a
$\T$-invariant, Cohen-Macaulay subscheme of pure dimension 1.

The $\T$-fixed subscheme $C_U \subset \C^3$ is defined by a 
monomial ideal $$\I_C \subset \C[x_1,x_2,x_3].$$
associated to the 3-dimensional partition $\pi$.
The localisation
$$(\I_C)_{x_3} \subset \C[x_1,x_2,x_3]_{x_3},$$
is $T$-fixed and corresponds to a 2-dimensional partition 
$\mu$.
Alternatively, the 2-dimensional partitions $\mu$ can be defined
as the infinite limit of 
the $x_3$-constant cross-sections of $\pi$.
In order for $C_U$ to have dimension 1,
$\mu$ can not be empty.

There exists
a unique {\em minimal} $\T$-fixed subscheme 
$$C_{\mu}\subset \C^3$$
with outgoing partition $\mu$.
The $3$-dimensional partition corresponding to $C_\mu$ is
the infinite cylinder on
the $x_3$-axis determined by the $2$-dimensional
partitions $\mu$.
Let 
\begin{eqnarray*}
\I_{\mu}= \mu[x_1,x_2] \cdot \C[x_1,x_2,x_3],& \ \ & 
C_{\mu}= \OO_{\C^3}/\I_{\mu}\ .
\end{eqnarray*}

\label{cmmm}
\subsection{Module $M_3$}
The kernel/cokernel sequence associated to the
$\T$-fixed restricted data \eqref{vvvt} takes the form
\begin{equation}\label{cvrw}
0 \rightarrow \I_{C_\mu} \rightarrow \OO_{U} \stackrel{s}
{\rightarrow} F_U \rightarrow Q_U \rightarrow 0\ 
\end{equation}
for an outgoing partition $\mu$.

Since the
support of the quotient $Q_U$ in \eqref{cvrw} is 0-dimensional
by stability and $\T$-fixed, 
$Q_U$ 
must be supported at the origin.
By Proposition \ref{descl}, the pair $(F_U,s_U)$
corresponds to a $\T$-invariant subsheaf of
$$\lim\limits_{\To}\hom(\m^r,\OO_{C_\mu})/\OO_{C_\mu}
,$$
where 
$\m$  is the
ideal sheaf of the origin in $C_\mu\subset\C^3$.
Let
$$M_3 = (\OO_{C_{\mu}})_{x_3}$$
be the $\C[x_1,x_2,x_3]$-module  obtained by localisation. Explicitly
$$
M_3=\C[x_3,x_3^{-1}]\otimes\frac{\C[x_1,x_2]}{\mu[x_1,x_2]}\,.
$$
By elementary algebraic arguments,
\begin{eqnarray*}
\lim\limits_{\To}\hom(\m^r,\OO_{C_\mu}) & 
 \cong &  M_3\ .
\end{eqnarray*}
The $\T$-equivariant
$\C[x_1,x_2,x_3]$-module $M_3$ has a canonical $\T$-invariant
element 1. 
By Proposition \ref{descl}, the $\T$-fixed pair $(F_U,s_U)$
corresponds to a finitely generated $\T$-invariant
 $\C[x_1,x_2,x_3]$-submodule
\beq\label{datum}
Q_U\subset M_3/\langle 1 \rangle.
\eeq

Conversely, {\em every} finitely generated{\footnote{Here, finitely generated
is equivalent to finite dimensional or Artinian.}}
$\T$-invariant $\C[x_1,x_2,x_3]$-sub\-module 
$$Q \subset M_3/\langle 1 \rangle$$
occurs as the restriction to $U$ of
a $\T$-fixed stable pair on $N$.

\subsection{The 1-leg stable pairs vertex}

\label{vc}
Let $R$ be the coordinate ring,
$$
R = \C[x_1,x_2,x_3] \cong \Gamma(U).
$$
Following
the conventions of Section \ref{legger}, the $\T$-action on $R$ is 
\begin{equation*}
  (t_1,t_2,t_3)\cdot x_i = t_i x_i  \,.
\end{equation*}
Since the tangent spaces are dual to the coordinate functions,
the tangent weight of $\mathbf{T}$ along the third axis is $-s_3$.

Let $Q_U \subset M/\langle 1 \rangle$ be a $\T$-invariant
submodule viewed as a stable pair on $U$.
Let $\Ib_U\udot$ denote the universal  complex on
$[Q_U] \times U$.
Consider a $\T$-equivariant free
resolution{\footnote{Here, $\Ib_U\udot$ is viewed to
live in degrees 0 and -1.}}
 of $\Ib_U\udot$,
\begin{equation}
  \label{resol}
 \{ \F_{s} \rightarrow \dots \rightarrow \F_{-1}\}
\cong 
\Ib_U\udot \ \in D^b([{Q}_U] \times U).
\end{equation}
Each term in \eqref{resol}
can be taken to have the form
$$
\F_i = \bigoplus_j 
 R(d_{ij})\,, \quad d_{ij} \in \Z^3.$$
The Poincar\'e polynomial
$$
P_U = \sum_{i,j} (-1)^{i+1} \  t^{d_{ij}}
\  
\in \Z[t_1^\pm,t_2^\pm,t_3^\pm]$$
does not depend on the choice of the resolution
\eqref{resol}.

We denote the $\T$-character of  $F_U$ by $\FFF_U$.
By the
sequence
$$0 \rightarrow \OO_{C_U} \rightarrow F_U \rightarrow Q_U 
\rightarrow 0,$$
we have a complete understanding of
the representation $\FFF_U$. 
The $\T$-eigenspaces of $F_U$ correspond to the
$\T$-eigenspaces of $\OO_{C_U}$ and 
 $Q_U$. 
The result determines 
$$\FFF_U \in \Z(t_1,t_2,t_3).$$
The rational dependence on the $t_i$ is elementary.

From
the resolution \eqref{resol}, we see
that the Poincar\'e polynomial
$P_U$ is related to the $\T$-character
of $F_U$ as follows:
\begin{equation}
\FFF_U =
\frac{1+P_U}{(1-t_1)(1-t_2)(1-t_3)} \label{PQ}
\,.
\end{equation}

The virtual represention
$\chi(\Ib_U\udot,\Ib_U\udot)$ is given by the
following alternating sum
\begin{align*}
\chi(\Ib_U\udot,\Ib_U\udot)  &= \sum_{i,j,k,l} (-1)^{i+k}
 \Hom_R(R(d_{ij}), R(d_{kl}))
\\
&=  \sum_{i,j,k,l} (-1)^{i+k}
R(d_{kl}-d_{ij})\,.
\end{align*}
Therefore, the $\T$-character is 
$$
\tr_{\chi(\Ib_U,\Ib_U)} =
\frac{P_U  \,\overline{P}_U}
{(1-t_1)(1-t_2)(1-t_3)} \,.
$$
The bar operation 
$$\gamma \in \Z(\!(t_1,t_2,t_3)\!) \mapsto 
\Z(\!(t_1^{-1},t_2^{-1},
t_3^{-1})\!)$$
is 
$t_i \mapsto t_i^{-1}$
on the variables.

We find the $\T$-character of 
the $U$ summand of  virtual tangent
space $\mathcal{T}_{\left[{I}\udot\right]}$
of the moduli space of stable pairs of the 1-leg cap is
$$ \tr_{R-\chi(\Ib\udot_U,\Ib_U\udot)} =
\frac{1-P_U \, \overline{P}_U}
{(1-t_1)(1-t_2)(1-t_3)} \,  ,
$$
see \cite{pt2}.
Using \eqref{PQ}, we may express the answer in terms of
$\FFF_U$,
\begin{equation}\label{vertexchar}
  \tr_{R-\chi(\Ib\udot_U,\Ib_U\udot)} 
= \FFF_{U} -
\frac{\overline{\FFF}_U}{t_1t_2t_3} +  \FFF_{U}
\overline{\FFF}_U \frac{(1-t_1)(1-t_2)(1-t_3)}{t_1 t_2 t_3} \,.
\end{equation}
On the right side of
 \eqref{vertexchar}, the rational functions
should be expanded 
in ascending powers in the $t_i$.

The stable pairs vertex is obtained from \eqref{vertexchar}
after a redistribution of edge terms following \cite{pt2}.
Let 
$$
\FFF_{\mu}  = \sum_{(k_1,k_2) \in \mu} t_1^{k_1}
t_2^{k_2}\ 
$$
correspond to the outgoing partition $\mu$.
Define
$$
\GGG_{\mu} = - \FFF_{\mu} -
\frac{\overline{\FFF}_{\mu}}{t_1 t_2} +  \FFF_{\mu}
\overline{\FFF}_{\mu} \frac{(1-t_1)(1-t_2)}{t_1 t_2} \,.
$$
Define the vertex character $\bV_U$ by the following
modification,
\begin{equation}\label{gx34}
\bV_U = \tr_{R-\chi(\Ib\udot_U,\Ib_U\udot)} 
 + 
\frac{\GGG_{\mu}(t_{1},t_{2})}{1-t_3}\, . 
\end{equation}
The character $\bV_U$ depends
{\em only on the local data ${Q}_U$}.
By the results of \cite{pt2}, $\bV_U$ is a 
Laurent polynomial in $t_1$, $t_2$, and $t_3$.

\subsection{Descendents}
Let $[0]\in H^*_\T(\PP^1,\Z)$ be the class of the
$\T$-fixed point
$0\in \PP^1$. Consider the $\T$-equivariant
descendent (with value in the $\T$-equivariant cohomology
of a point),
\begin{multline}\label{vpzz}
\left\langle \tau_{i_1}([0]) \cdots \tau_{i_k}([0]) \right
\rangle_{n,d}^N =\\
\int_{P_n(N,d)} \prod_{j=1}^k \tau_{i_j}([0])
\Big( [P_{n}(N,d)]^{vir}\Big)\in \Q(s_1,s_2,s_3)\ ,
\end{multline}
following the notation of  Section \ref{dess}.

In order to calculate \eqref{vpzz} by $\T$-localization, we
must determine the action of
the operators $\tau_{i}([0])$ on the $\T$-equivariant
cohomology of the $\T$-fixed loci.
The calculation of \cite{pt2} yields a formula for the
descendent weight,
\begin{multline}
\bw_{{i_1},\cdots, {i_m}}
(Q_U) =\\
 e(-\bV_{U})
\cdot \prod_{j=1}^m  
\text{ch}_{2+i_j}\big(\FFF_{U}\cdot 
(1-t_1)(1-t_2)(1-t_3)\big) \ .
\end{multline}
The {\em descendent vertex} $\bW_\mu^{\mathsf{Vert}}(\tau_{i_1}([0]) \cdots 
\tau_{i_m}([0]))$
is obtained from the descendent weight,
\begin{multline}\label{vvped}
\bW_\mu^{\mathsf{Vert}} (\tau_{i_1}([0]) \cdots \tau_{i_k}([0]))
 = \\
\left(\frac{1}{s_1s_2}\right)^k
\sum_{Q_U}
 \bw_{{i_1}, \cdots, {i_k}}
(Q_U)\ q^{\ell({Q}_U)+|\mu|}\ \in
\Q(s_1,s_2,s_3)(\!(q)\!)\ .
\end{multline}
\label{heyle}
Here, $\ell(Q_U)$ is the length of $Q_U$.

\subsection{Edge weights}
The edge weight in the cap geometry  is
$$\bW^{(0,0)}_\mu = e(\GGG_\mu)\  \in \Q(s_1,s_2).$$
In fact, $\bW^{(0,0)}_\mu$ is simply the inverse product
of the tangent weights of the Hilbert scheme of points of $\C^2$
at the $T$-fixed point corresponding to the partition $\mu$.


\section{Capped 1-leg descendents: stationary} \label{333}
\subsection{Overview}
Consider the capped geometry of Section \ref{legger}. As before,  let
 $0\in \PP^1$ be the $\mathbf{T}$-fixed point away
from the relative divisor over $\infty \in \PP^1$, and 
let 
$$[0] \in H^*_{\mathbf{T}}(\PP^1, \mathbb{Z})$$
be the associated class.
The $\mathbf{T}$-weight on the tangent space to $\PP^1$ at 0
is $-s_3$.
We study here the stationary{\footnote{Stationary
refers to descendents of point classes.}} series 
\begin{equation}
{\mathsf Z}^{\mathsf{cap}}_{d,\eta} 
\left(   \prod_{j=1}^k \tau_{i_j}([0])
\right)^{\mathbf{T}}\ .
\label{gbn}
\end{equation}
Our main result is a special case of Theorem \ref{cnnn}.
\begin{prop}
\label{cttt} 
$
{\mathsf Z}^{\mathsf{cap}}_{d,\eta} 
\left(   \prod_{j=1}^k \tau_{i_j}([0])
\right)^{\mathbf{T}}$
is the 
Laurent expansion in $q$ of a rational function in 
$\mathbb{Q}(q,s_1,s_2,s_3)$.
\end{prop}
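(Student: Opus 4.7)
The plan is to apply $\mathbf{T}$-equivariant localization to $P_n(N/N_\infty,d)$. Because the relative divisor $N_\infty$ and the descendent class $[0]\in H^*_{\mathbf{T}}(\mathbb{P}^1,\mathbb{Z})$ are both $\mathbf{T}$-fixed, the $\mathbf{T}$-fixed loci are governed entirely by the 1-leg vertex formalism of Section \ref{ttt}: a $\mathbf{T}$-fixed stable pair on $N$ is described by a 3-dimensional partition at $0\in\mathbb{P}^1$ with outgoing 2-dimensional partition $\mu$ along the $x_3$-axis, and the partition $\mu$ is propagated along $\mathbb{P}^1$ to $\infty$ where it is paired against $\mathsf{C}_\eta$ via the boundary map $\epsilon_\infty$.

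Assembling the localization contributions and using that the descendent insertions $\tau_{i_j}([0])$ are supported at the vertex, one obtains a factorization
\[
\mathsf{Z}^{\mathsf{cap}}_{d,\eta}\left(\prod_{j=1}^{k}\tau_{i_j}([0])\right)^{\mathbf{T}}
= \sum_{|\mu|=d}\, \mathbf{W}^{\mathsf{Vert}}_{\mu}\!\left(\prod_{j=1}^{k}\tau_{i_j}([0])\right)\cdot \mathbf{W}^{(0,0)}_{\mu}\cdot \mathsf{P}_{\mu,\eta},
\]
where $\mathsf{P}_{\mu,\eta}\in\mathbb{Q}(s_1,s_2,s_3)$ encodes the Nakajima pairing $g_{\mu\nu}$ together with the equivariant weights contributed by $\mathsf{C}_\eta$. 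Both $\mathbf{W}^{(0,0)}_\mu$ and $\mathsf{P}_{\mu,\eta}$ are $q$-independent rational functions in $s_1,s_2,s_3$; only the vertex factor carries $q$-dependence. Hence rationality in $q$ of the capped partition function reduces to rationality in $q$ of the descendent vertex series $\mathbf{W}^{\mathsf{Vert}}_{\mu}$ for each outgoing partition $\mu$ of $d$.

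The vertex is the sum
\[
\mathbf{W}^{\mathsf{Vert}}_{\mu}\!\left(\prod_{j=1}^{k}\tau_{i_j}([0])\right)
=\left(\frac{1}{s_1 s_2}\right)^{\!k}\!\!\sum_{Q_U}\mathbf{w}_{i_1,\ldots,i_k}(Q_U)\, q^{\ell(Q_U)+|\mu|}
\]
over finitely generated $\mathbf{T}$-invariant submodules $Q_U\subset M_3/\langle 1\rangle$, with $\mathbf{w}_{i_1,\ldots,i_k}(Q_U)=e(-\mathbf{V}_U)\prod_j \mathrm{ch}_{2+i_j}(\mathsf{F}_U(1-t_1)(1-t_2)(1-t_3))$. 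Although each term is a rational function in $s_1,s_2,s_3$, the summation is a priori only a formal Laurent series in $q$, and its rational structure is not apparent from the expression itself. To upgrade the series to a rational function of $q$ is precisely the \emph{pole cancellation property} established in Section \ref{polecan}: a detailed analysis shows that the infinitely many potential poles of the individual summands (in particular those along the $x_3$-direction encoded by $s_3$) conspire to cancel in the sum, leaving a rational function of $q$.

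The main obstacle is this pole cancellation. The number of $\mathbf{T}$-fixed configurations $Q_U$ with a fixed asymptotic partition $\mu$ and bounded length is finite, but as $\ell(Q_U)\to\infty$ the rational coefficients in $s_1,s_2,s_3$ acquire increasingly intricate dependence on the box data, and ad hoc bookkeeping does not suffice. The required argument proceeds by a careful analysis of the vertex character $\mathbf{V}_U$ and the descendent factor, organized so that poles of maximal order in the $s_3$-variable cancel across groups of configurations; this is the delicate combinatorial/algebraic step carried out in Section \ref{polecan}, to which we refer for the technical heart of the proof. Granting this, the reduction above yields Proposition \ref{cttt}.
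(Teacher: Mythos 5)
Your localization formula misidentifies the contribution at the relative divisor. The factor over $\infty\in\PP^1$ is not a $q$-independent pairing $\mathsf{P}_{\mu,\eta}\in\Q(s_1,s_2,s_3)$: because the target can degenerate along $N_\infty$, the fixed-locus contribution there is the rubber series
$\mathsf{S}^\mu_\eta=\sum_{n\geq d}q^n\langle \mathsf{P}_\mu\,|\,\tfrac{1}{s_3-\psi_0}\,|\,\CC_\eta\rangle^\sim_{n,d}$,
which carries its own infinite $q$-dependence. Consequently the problem does not reduce to rationality of the vertex alone; one must also control $\mathsf{S}^\mu_\eta$, which in the paper is done separately (via the $(-a,0)$-tube geometry, the known rationality of relative theories without insertions, and holomorphic symplectic vanishing) and only after a specialization of $s_3$.

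The second, more serious gap is that the pole cancellation of Section \ref{polecan} does not assert what you need. Theorem \ref{canpole} does not show that $\bW^{\mathsf{Vert}}_\mu$ is a rational function of $q$ for generic $s_3$; it shows that the \emph{evaluation} at $s_3=\tfrac{1}{a}(s_1+s_2)$ is well-defined and is a Laurent \emph{polynomial} in $q$ (only finitely many groups of configurations contribute after the specialization). Rationality of the vertex at generic $s_3$ is exactly what the argument is structured to avoid proving. The actual proof first establishes (Lemma \ref{rq2}, via a proper Hilbert--Chow type pushforward to $\oplus_1^d\com^2$) that each coefficient of $q^n$ is a polynomial in $s_3$ of degree at most $\delta$, then evaluates the whole localization formula at $s_3=\tfrac{1}{a}(s_1+s_2)$ for infinitely many integers $a>0$ — where the vertex is polynomial in $q$ and the rubber factor is rational in $q$ — and finally recovers the full $s_3$-dependence by Vandermonde interpolation. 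Without the polynomiality-in-$s_3$ step and the interpolation, the specialization results cannot be assembled into the statement of the Proposition, so your reduction as written does not close.
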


\subsection{Dependence on $s_3$}
The function \eqref{gbn} is the generating series of the
integrals 
\begin{equation} \label{krt}
\left\langle \prod_{j=1}^k \tau_{i_j}([0])
\right\rangle_{\!n,\eta}^{\mathsf{cap},{\mathbf{T}}}
=
\int _{[P_{n} (N/N_\infty,d)]^{vir}}
  \prod_{j=1}^k \tau_{i_j}([0])\ 
\cup \epsilon_\infty^*(C_{\eta})\ ,
\end{equation}
following the notation of Section \ref{legger}.

Let $\ell(\eta)$ denote the length of the partition
$\eta$ of $d$, and let \begin{equation}\label{ktgg}
\delta=\sum_{j=1}^k i_j + d-\ell(\eta)\ .
\end{equation}
The dimension of 
$[P_{n} (N/N_\infty,d)]^{vir}$ after applying
the integrand of \eqref{krt} is $2d-\delta$.

\begin{lem} The 
 integral $\left\langle \prod_{j=1}^k \tau_{i_j}([0])
\right\rangle_{\!n,\eta}^{\mathsf{cap},\mathbf{T}}$ \label{rq2}
is a {\em polynomial} in $s_3$ of degree
$\delta$
with coefficients in the subring
$${\mathbb Q}[s_1,s_2]_{(s_1s_2)}\subset {\mathbb Q}(s_1,s_2).$$
\end{lem}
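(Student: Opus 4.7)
The plan is to apply $T$-equivariant virtual localization, exploit the resulting properness of the $T$-fixed locus, and close with a degree count. Applying Graber--Pandharipande localization with respect to the $2$-torus $T \subset \mathbf{T}$ scaling the fibers of $N$, one observes that every $T$-fixed stable pair is supported on a $T$-invariant Cohen-Macaulay curve, which must be a thickening of the zero section $\PP^1 \subset N$ by an outgoing partition $\mu$ with $|\mu|=d$. Together with the relative condition at $\infty$, the fixed locus $P_n(N/N_\infty, d)^T$ is then a disjoint union of proper projective schemes parametrizing $0$-dimensional embedded data along thickened $\PP^1$'s, each carrying a residual $\com^*$-action from the third factor of $\mathbf{T}$.

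Virtual localization gives
\[
\left\langle \prod_j \tau_{i_j}([0]) \right\rangle^{\mathsf{cap},\mathbf{T}}_{\!n,\eta} = \sum_{F \subset P_n^T}\ \int_{[F]^{vir}} \frac{\alpha|_F}{e(N^{vir}_F)},
\]
with $\alpha = \prod_j \tau_{i_j}([0]) \cup \epsilon_\infty^*(\CC_\eta)$. The $T$-moving weights of $N^{vir}_F$ take the form $a s_1 + b s_2 + k s_3$ with $(a,b) \neq (0,0)$: the $a s_1 + b s_2$ piece comes from transverse deformations of the thickened $\PP^1$ (mirroring the tangent weights of $\Hilb(\com^2, d)$ at $\mu$), while the $k s_3$ piece arises from the $\com^*$-grading of global sections on $\PP^1$. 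Each such factor is invertible once $\mathbb{Q}[s_1, s_2, s_3]$ is localized at the multiplicative set generated by the $a s_1 + b s_2$, producing a formal series expansion in $s_3$ on $F$. Properness of $F$ together with its residual $\com^*$-action then forces the $\com^*$-equivariant pushforward to land in a polynomial ring in $s_3$, and dimension counting pins the degree: the integrand $\alpha$ has total $\mathbf{T}$-equivariant cohomological degree $\sum i_j + (d - \ell(\eta)) = \delta$, so the equivariant integral has total weight $\delta$, bounding the $s_3$-degree by $\delta$.

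It remains to show the coefficients lie in $\mathbb{Q}[s_1, s_2]_{(s_1 s_2)}$ rather than in a larger localization of $\mathbb{Q}[s_1,s_2]$. The Nakajima pairing normalization of $\CC_\eta$ contributes the factor $(s_1 s_2)^{-\ell(\eta)}$ recorded in the formula for $g_{\mu\nu}$ of \S\ref{lc2}, and the remaining linear-form denominators $a s_1 + b s_2$ arising from the $\Hilb(\com^2, d)$ tangent weights must cancel in the sum over components $F$. This cancellation is the main technical obstacle, and I expect it to follow from the intrinsic structure of $T$-equivariant intersection theory on the Hilbert scheme of points in $\com^2$, where pairings against the Nakajima basis produce only $(s_1 s_2)$-denominators; in our setting, the relative cap geometry packages the contributions of each $F$ into exactly such a Nakajima-basis pairing on $\Hilb(N_\infty,d)$.
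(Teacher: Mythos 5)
Your overall strategy --- localize with respect to the fiber-scaling torus $T\subset\mathbf{T}$, use properness of the $T$-fixed locus, and finish with a degree count --- is genuinely different from the paper's argument, but as written it does not establish either of the two quantitative claims of the lemma. First, the degree bound. Your dimension count only shows that the answer is homogeneous of total degree $\delta-2d$ in $(s_1,s_2,s_3)$. Once the coefficients are allowed to lie in $\Q(s_1,s_2)$ (or even in $\Q[s_1,s_2]_{(s_1s_2)}$ with unbounded powers of $s_1s_2$ in the denominator), homogeneity places \emph{no} upper bound on the power of $s_3$: a term like $s_3^{m}\,p(s_1,s_2)/(s_1s_2)^{N}$ can be homogeneous of degree $\delta-2d$ for arbitrarily large $m$. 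Relatedly, the per-component claim that properness of $F$ forces polynomiality in $s_3$ fails: after inverting only the forms $as_1+bs_2$, the factor $1/(as_1+bs_2+ks_3)$ with $k\neq 0$ is an honest infinite power series in $s_3$ (the pure weight $ks_3$ is not nilpotent), and pushing forward from a proper $F$ does not truncate it. Individual fixed-point contributions really do have $s_3$ in their denominators --- only the full sum does not, and seeing that requires an argument you have not supplied.

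Second, the coefficient claim. You explicitly defer the cancellation of the non-monomial denominators $as_1+bs_2$ to an expected ``intrinsic structure'' of pairings on $\Hilb(\com^2,d)$; this is precisely the hard content of the lemma, and the proposed mechanism is not substantiated (the $T$-fixed components and their virtual normal weights are not packaged as a single Nakajima-basis pairing on $\Hilb(N_\infty,d)$). The paper avoids localization entirely here: it pushes the class $\bigl(\prod_j\tau_{i_j}([0])\cup\epsilon_\infty^*(\CC_\eta)\bigr)\cap[P_n(N/N_\infty,d)]^{vir}$ forward along a proper $\mathbf{T}$-equivariant morphism $\rho$ (Hilbert--Chow followed by the higher-moments map) to the affine space $\oplus_1^d\com^2$, on which the third $\com^*$ acts trivially. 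There the class of codimension $\delta$ is automatically $\gamma_0+\gamma_1s_3+\cdots+\gamma_\delta s_3^{\delta}$ with $\gamma_i\in A^T_{2d-\delta+i}$ --- the degree bound is the statement that $\gamma_i$ must have nonnegative codimension --- and the unique $T$-fixed point of $\oplus_1^d\com^2$ has purely monomial tangent weights $-js_1,-js_2$, which yields the $\Q[s_1,s_2]_{(s_1s_2)}$ claim in one stroke. If you want to salvage your route, you would need to find a substitute for this geometric input; the fixed-locus-by-fixed-locus cancellation you are hoping for is exactly what the paper's construction is designed to circumvent.
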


\begin{proof} 
Let $N=\cO_{\Pp} \oplus \cO_{\Pp}$.
Let
${\mathbb{F}} \rarr {\mathcal N}$
denote the universal sheaf over the universal total space
$${\mathcal N} \rarr P_n(N/N_\infty,d).$$ 
Since $N=\PP^1 \times \com^2$, there is a proper morphism
$${\mathcal N} \rarr P_n(N/N_\infty,d) \times \com^2.$$
The locations and multiplicities of the supports of the universal
sheaf determine a
morphism of Hilbert-Chow type,
$$\iota:P_n(N/N_\infty,d) \rarr \text{Sym}^{d}(\com^2).$$
A $\mathbf{T}$-equivariant, proper
morphism,
$$\widehat{\iota}: \text{Sym}^{d}(\com^2) \rarr \oplus_{1}^{d}
(\com^2),$$
is obtained via the higher moments,
\begin{multline*}
\widehat{\iota}\Big( \ \{(x_i,y_i)\} \ \Big) = \\
\Big(\sum_i x_i, \sum_i y_i\Big) \oplus 
\Big(\sum_i x^2_i, \sum_i y^2_i\Big) \oplus \cdots \oplus 
\Big(\sum_i x^d_i, \sum_i y^d_i\Big).
\end{multline*}
Let $\rho=\widehat{\iota}\circ \iota$.

Since $\rho$ is a $\mathbf{T}$-equivariant, proper morphism,
there is a $\mathbf{T}$-equivariant push-forward
$$\rho_*: A^{\mathbf{T}}_*(P_n(N/N_\infty,d), {\mathbb Q}) \rarr
A^{\mathbf{T}}_*(   \oplus_1^{d}(\com^2)    , {\mathbb Q}).$$
Descendent invariants
are defined via the $\mathbf{T}$-equivariant residue of
$$\left(\prod_{j=1}^k \tau_{i_j}([0]) \cup \epsilon^*_{\infty}(C_\eta)
\right) \ \cap
[P_n(N/S,d)]^{vir} \ \in A^{\mathbf{T}}_*(P_n(N/N_\infty,d), {\mathbb Q}).$$
We may instead calculate the $\mathbf{T}$-equivariant residue of
\begin{equation}\label{ress}
\rho_*\left( \left( 
\prod_{j=1}^k \tau_{i_j}([0]) \cup \epsilon^*_{\infty}(C_\eta)\right) \ \cap
[P_n(N/N_\infty,d)]^{vir}\right)
\end{equation} in
$A^{\mathbf{T}}_*( 
\oplus_1^{d}(\com^2)    , {\mathbb Q})$.

The codimension of the class \eqref{ress} in
$\oplus_1^{d}(\com^2)$
 is $\delta$.
Since the third factor of $\mathbf{T}$ acts trivially on
$\oplus_1^{d}(\com^2)$, the class \eqref{ress} may be written as
\begin{equation}\label{htty3}
\gamma_0 s_3^0 + \gamma_1 s_3^1 + \ldots + \gamma_{\delta} s_3^{\delta}
\end{equation}
where $\gamma_i \in A^{T}_{2d-\delta+i}( 
\oplus_1^{d}(\com^2)    , {\mathbb Q})$.
Since the space $\oplus_1^{d} 
(\com^2)$ has a unique $T$-fixed point
with tangent weights,
$$-s_1,-s_2,-2s_1,-2s_2, \ldots, -ds_1, -ds_2,$$
we conclude the localization of $\gamma_i$
has only monomial poles in the variables $t_1$ and $t_2$.
\end{proof}

As a consequence of Lemma \ref{rq2}, we may write
\begin{equation} \label{krtt}
{\mathsf Z}^{\mathsf{cap}}_{d,\eta} 
\left(   \prod_{j=1}^k \tau_{i_j}([0])
\right)^{\mathbf{T}}
=\sum_{r=0}^\delta  s_3^r \cdot \Gamma_r(q,s_1,s_2)
\end{equation}
where $\Gamma_r \in \mathbb{Q}(s_1,s_2)((q))$.

\subsection{Localization: rubber contribution} \label{rubcon}
The $\mathbf{T}$-equivariant
localization formula for the series 
${\mathsf Z}^{\mathsf{cap}}_{d,\eta} 
\left(   \prod_{j=1}^k \tau_{i_j}([0])
\right)^{\mathbf{T}}$
has three parts:
\begin{enumerate}
\item[(i)] vertex terms over $0\in \PP^1$,
\item[(ii)] edge terms,
\item[(iii)] rubber integrals over $\infty \in \PP^1$.
\end{enumerate}
The vertex and edge terms have been explained already in Section \ref{ttt}.
We discuss the rubber integrals here.

The stable pairs theory of {\em rubber}{\footnote{We
follow the terminology and conventions of the
parallel rubber discussion for the local Donaldson-Thomas
theory of curves treated in \cite{lcdt}.}} naturally arises at the
boundary of $P_n(N/N_\infty,d)$.
Let $R$ be a rank 2 bundle of level $(0,0)$ over $\Pp$. Let 
 $$R_0, R_\infty\subset R$$ 
denote the fibers over $0, \infty\in \Pp$.
The 1-dimensional torus $\C^*$ acts on $R$ via the symmetries of
$\Pp$. 
Let $P_n(R/R_0\cup R_\infty,d)$ be the relative moduli space
of stable pairs, and let
$$P_n(R/R_0 \cup R_\infty,d)^\circ \subset P_n(R/R_0\cup R_\infty,d)$$
denote the open set with finite stabilizers for the $\C^*$-action
and {\em no} destabilization over $\infty\in \Pp$.
The rubber moduli space,
$${P_n(R/R_0\cup R_\infty,d)}^\sim  
= P_n(R/R_0 \cup R_\infty,d)^\circ/\C^*,$$
denoted by a superscripted tilde,
is determined by the (stack) quotient. The moduli space is 
empty unless $n>d$.
The rubber theory of $R$ is defined by integration against the
rubber virtual class,
 $$[{P_n(R/R_0\cup R_\infty,d)}^\sim ]^{vir}.$$ 
All of the above rubber constructions are $T$-equivariant for the
scaling action on the fibers of $R$ with weights $s_1$ and $s_2$.

The rubber moduli space $P_n(R/R_0\cup R_\infty, d)^\sim$ carries
a cotangent line at the dynamical point $0 \in \Pp$. Let
$$\psi_0 \in A^1_T({P_n(R/R_0\cup R_\infty,d)}^\sim, {\mathbb Q})$$
denote the associated cotangent line class.
Let $$\mathsf{P}_\mu \in A^{2d}_T(\text{Hilb}(\C^2,d),\mathbb{Z})$$
be the class corresponding to the $T$-fixed point determined
by the monomial ideal $\mu[x_1,x_2]\subset \C[x_1,x_2]$.

In the localization formula for the cap, special
rubber integrals with relative conditions $\mathsf{P}_\mu$ over $0$ and $\CC_\eta$
(in the Nakajima basis) over $\infty$ arise. Let
\begin{equation*} 
\mathsf{S}^\mu_\eta =   
\sum_{n\geq d} q^{n}
\left\langle \mathsf{P}_\mu \ \left| \ \frac{1}{s_3-\psi_0}  \ \right|\ \CC_\eta 
\right\rangle_{n,d}^{
\sim}\ \in \Q(s_1,s_2,s_3)((q)) \ .
\end{equation*}
The bracket on the right is the rubber
integral defined by $T$-equivariant
residues. If $n=d$, the rubber moduli space in undefined ---
the bracket is then taken to be the $T$-equivariant intersection pairing
between the classes $\mathsf{P}_\mu$ and $\CC_\eta$ in 
$\text{Hilb}(\C^2,d)$.

The $s_3$ dependence of the rubber integral
$$ \left\langle \mathsf{P}_\mu \ \left| \ \frac{1}{s_3-\psi_0}  \ \right|\ \CC_\eta 
\right\rangle_{n,d}^{
\sim}\ \in \Q(s_1,s_2,s_3)$$
enter {\em only} through the term $s_3-\psi_0$.
On the $T$-fixed loci of the moduli space $P_n(R/R_0\cup R_\infty, d)^\sim$, the
cotangent line class $\psi_0$ is either equal to 
a weight of $\text{Tan}_\mu$  (if $0$ lies on a twistor component)
or is nilpotent (if $0$ lies on a non-twistor component).
We conclude the following result.

\begin{lem} The evaluation of $\mathsf{S}_\eta^\mu$ at \label{plle}
$$s_3= n_1 s_1 + n_2 s_2,\ \ \ \ n_1,n_2\in \Q$$
is well-defined if $(n_1,n_2) \neq (0,0)$
and $n_1 s_1 + n_2 s_2$ is not a weight of $\text{\em Tan}_\mu$.
\end{lem}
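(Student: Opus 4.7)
The strategy is to apply $T$-equivariant virtual localization to the rubber moduli space $P_n(R/R_0\cup R_\infty,d)^\sim$ and examine the resulting contributions component by component. After localization, each summand is a rational function in $s_1,s_2,s_3$ obtained by pushing forward the class
\[
\Big(\mathsf{P}_\mu \cup \tfrac{1}{s_3-\psi_0} \cup \CC_\eta\Big)\cap \frac{[F]^{vir}}{e(N^{vir}_F)}
\]
over a $T$-fixed component $F$. By inspection, $s_3$ enters the integrand \emph{only} through the factor $1/(s_3-\psi_0)$; all other terms (the classes $\mathsf{P}_\mu$, $\CC_\eta$, and the inverse equivariant Euler class of the virtual normal bundle) depend only on $s_1$ and $s_2$. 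Consequently, it suffices to verify that the specialization $s_3 = n_1 s_1 + n_2 s_2$ can be taken inside $1/(s_3-\psi_0|_F)$ on each $T$-fixed component $F$ under the stated hypotheses.

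I would then split into two cases according to the dichotomy for $\psi_0|_F$ recalled just above the lemma. In the \emph{twistor case}, where $0$ lies on a twistor component, $\psi_0|_F$ equals a tangent weight $w = a_1 s_1 + a_2 s_2$ of $\mathrm{Tan}_\mu$, so $1/(s_3-\psi_0|_F)$ specializes to $1/((n_1-a_1)s_1+(n_2-a_2)s_2)$, which is a well-defined element of $\mathbb{Q}(s_1,s_2)$ provided $(n_1,n_2)\neq(a_1,a_2)$. Ranging over all twistor components and all weights of $\mathrm{Tan}_\mu$, this is exactly the assumption that $n_1 s_1 + n_2 s_2$ is not a weight of $\mathrm{Tan}_\mu$. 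In the \emph{non-twistor case}, $\psi_0|_F$ is nilpotent, so for some $N$ one expands
\[
\frac{1}{s_3-\psi_0|_F} = \frac{1}{s_3}\sum_{k=0}^{N}\left(\frac{\psi_0|_F}{s_3}\right)^{k},
\]
and this makes sense as long as $s_3 \neq 0$. Since $s_1$ and $s_2$ are algebraically independent in $\mathbb{Q}(s_1,s_2)$, the assumption $(n_1,n_2)\neq(0,0)$ ensures $n_1 s_1 + n_2 s_2 \neq 0$.

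Summing the specialized contributions over all $T$-fixed components yields a well-defined element of $\mathbb{Q}(s_1,s_2)$ for each coefficient of $q^n$, and hence the desired evaluation of $\mathsf{S}^\mu_\eta$. The main conceptual step is the twistor/non-twistor dichotomy for $\psi_0|_F$; this is already supplied by the standard analysis of $T$-fixed loci on the rubber moduli space (parallel to the Donaldson--Thomas discussion of \cite{lcdt}), so the remaining work is the routine algebraic check performed above.
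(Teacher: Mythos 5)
Your proposal is correct and follows essentially the same route as the paper: the authors likewise observe that the $s_3$-dependence enters only through the factor $s_3-\psi_0$ and invoke the twistor/non-twistor dichotomy for $\psi_0$ on the $T$-fixed loci, with the nilpotent case covered by $(n_1,n_2)\neq(0,0)$ and the twistor case by the hypothesis that $n_1s_1+n_2s_2$ is not a weight of $\mathrm{Tan}_\mu$. Your write-up simply makes the localization bookkeeping more explicit than the paper's brief remark preceding the lemma.
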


The weights of $\text{Tan}_\mu$ are either proportional to
$s_1$ or $s_2$ or of the form
$$n_1s_1+n_2s_2 ,\ \ \ \ n_1,n_2\neq 0$$
where $n_1$ is the {\em opposite} sign of $n_2$.

\subsection{Localization: full formula}

The localization formula \cite{GraberP} for the capped 1-leg descendent
vertex is the following:
\begin{equation}\label{fred}
{\mathsf Z}^{\mathsf{cap}}_{d,\eta} 
\left(   \prod_{j=1}^k \tau_{i_j}([0])
\right)^{\mathbf{T}} = \sum_{|\mu|=d}
\bW_\mu^{\mathsf{Vert}} \left(\prod_{j=1}^k \tau_{i_j}([0])      \right) \cdot
{\bW_\mu^{(0,0)}} \cdot \mathsf{S}^{\mu}_{\eta}\ .
\end{equation}
The form is the same as the Donaldson-Thomas localization formulas
used in \cite{moop,lcdt}.

\subsection{Proof of Proposition \ref{cttt}} \label{ggtt2}
We will consider the evaluations of ${\mathsf Z}^{\mathsf{cap}}_{d,\eta} 
(   \prod_{j=1}^k \tau_{i_j}([0]))^{\mathbf{T}}$ at the values
\begin{equation}\label{gthh4}
s_3 = \frac{1}{a}(s_1+s_2)
\end{equation}
for all integers $a>0$.
By Theorem \ref{canpole}, the main cancellation of poles result
of Section \ref{polecan}, the evaluation \eqref{gthh4} of
$\bW_\mu^{\mathsf{Vert}} \left(\prod_{j=1}^k \tau_{i_j}([0])      \right)$
is well-defined and yields a Laurent {\em polynomial} in $q$
with coefficients in $\Q(s_1,s_2)$.
The edge term $\bW_\mu^{(0,0)}$ has no $s_3$ dependence (and
$q$ dependence given by $q^{-d}$).
The evaluation \eqref{gthh4} of
$\mathsf{S}^\mu_\eta$ is well-defined by Lemma \ref{plle} and 
is the Laurent series
associated to a rational function in $\Q(q,s_1,s_2)$
by Lemma \ref{hyy3} below.

We have proven the evalution of ${\mathsf Z}^{\mathsf{cap}}_{d,\eta} 
\left(   \prod_{j=1}^k \tau_{i_j}([0])
\right)^{\mathbf{T}}$
at \eqref{gthh4}  for all integers $a>0$ is well-defined and yields
a rational function in $\Q(q,s_1,s_2)$. 
By \eqref{krtt}  and the invertibility of the
Vandermonde matrix, we see 
$$\Gamma_r(q,s_1,s_2) \in \Q(q,s_1,s_2)$$
for all $0 \leq r \leq \delta$. 
\qed

\subsection{Evaluation of $\mathsf{S}_\eta^\mu$}
\label{dfv}
The following result is well-known from the study of the
quantum differential equation of the Hilbert scheme of 
points \cite{hilb1,hilb2}. We include the proof
for the reader's convenience.

\begin{lem} For all integers $a\neq 0$, the evaluation
$$\mathsf{S}_\eta^\mu
|_{s_3=\frac{1}{a}(s_1+s_2)}$$ 
yields the Laurent series associated to a rational
function in $\Q(q,s_1,s_2)$. \label{hyy3}
\end{lem}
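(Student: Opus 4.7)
\medskip
\noindent\textbf{Proof plan.} The strategy is to identify the series $\mathsf{S}_\eta^\mu$ as a matrix element of the resolvent of the quantum multiplication operator on the Hilbert scheme of points $\mathrm{Hilb}(\mathbb{C}^2, d)$, and then to invoke the explicit formulas of Okounkov--Pandharipande \cite{hilb1, hilb2}.

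First, I would carry out a rubber-calculus identification: by $T$-equivariant localization on the rubber moduli space $P_n(R/R_0\cup R_\infty,d)^\sim$ with respect to the $\C^*$-action scaling the base $\PP^1$ of $R$, combined with the degeneration formula and the DT/PT correspondence, the series $\mathsf{S}_\eta^\mu$ can be rewritten in the form
$$
\mathsf{S}_\eta^\mu \;=\; \langle\, \mathsf{P}_\mu \,|\, (s_3 \cdot \mathrm{Id} - M_D(q))^{-1} \,|\, \CC_\eta \,\rangle_{\mathrm{Hilb}(\mathbb{C}^2, d)},
$$
where $M_D(q)$ is the operator of quantum multiplication by the tautological divisor class on $\mathrm{Hilb}(\mathbb{C}^2,d)$. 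The inverse is to be expanded as a geometric series in ascending powers of $M_D(q)/s_3$, each term corresponding to inserting additional rubber levels via the degeneration formula.

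Second, I would invoke the Okounkov--Pandharipande computation \cite{hilb1, hilb2}: in the Nakajima basis, the matrix of $M_D(q)$ has entries that are rational functions in $q$ whose denominators divide $\prod_{r=1}^d (1 - (-q)^r)$, with coefficients lying in $\Q[s_1, s_2]_{(s_1 s_2)}$. Consequently the entries of the resolvent $(s_3 - M_D(q))^{-1}$ are rational in $q, s_1, s_2, s_3$, and $\mathsf{S}_\eta^\mu$ is the Laurent expansion in $q$ of such a rational function.

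Third, for the specialization $s_3 = (s_1+s_2)/a$ with $a \neq 0$, we have $s_3 = n_1 s_1 + n_2 s_2$ with $n_1 = n_2 = 1/a \neq 0$. The tangent weights of $\mathrm{Tan}_\mu$ have the form $n_1' s_1 + n_2' s_2$ with $n_1', n_2'$ of opposite signs (or one of them zero), so $(s_1+s_2)/a$ is never such a weight. By Lemma \ref{plle}, the evaluation of $\mathsf{S}_\eta^\mu$ at this value is well-defined; combined with the rationality above, it yields a rational function in $q, s_1, s_2$, proving the lemma.

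The main obstacle is the rubber-calculus identification of Step 1: one must carefully track how the $\psi_0$-class contributions under $\C^*$-localization on $P_n(R/R_0\cup R_\infty,d)^\sim$ assemble into the resolvent of the quantum connection on $\mathrm{Hilb}(\mathbb{C}^2,d)$. Once this identification is established, Steps 2 and 3 are formal consequences of \cite{hilb1, hilb2} and Lemma \ref{plle} respectively.
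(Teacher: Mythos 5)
Your proposal has a genuine gap, and it is concentrated in Steps 1 and 2. The identification of $\mathsf{S}^\mu_\eta$ with a matrix element of the resolvent $(s_3\cdot\mathrm{Id}-M_D(q))^{-1}$ is not correct: when one expands $\frac{1}{s_3-\psi_0}$ and trades each power of $\psi_0$ for a rubber splitting, the Euler characteristic $n$ (hence the power of $q$) is distributed over the newly created rubber levels, so the $\psi_0$-calculus produces a $q$-\emph{convolution} structure rather than a geometric series. The correct statement, which is the one used in Section \ref{ennd} of the paper, is that the matrix $\mathsf{S}(q)$ is a \emph{fundamental solution of the quantum differential equation} of $\mathrm{Hilb}(\mathbb{C}^2,d)$, not the resolvent of quantum multiplication. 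This distinction is fatal for Step 2: a fundamental solution of $s_3\, q\frac{d}{dq}\Psi = M_D(q)\Psi+\cdots$ is genuinely transcendental in $q$ for generic $s_3$ (it has $q^{M/s_3}$-type monodromy at $q=0$), so $\mathsf{S}^\mu_\eta$ is \emph{not} the Laurent expansion of a rational function in $\Q(q,s_1,s_2,s_3)$. Your argument would prove rationality of $\mathsf{S}^\mu_\eta$ for arbitrary $s_3$, which is false and is precisely why the lemma is stated only at the resonant values $s_3=\frac{1}{a}(s_1+s_2)$. (Your Step 3, the well-definedness of the evaluation via Lemma \ref{plle} and the structure of the weights of $\mathrm{Tan}_\mu$, is fine but is only a small part of what is needed.)

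The paper's route avoids computing $\mathsf{S}$ at all. It introduces the $(-a,0)$-tube, i.e.\ $\cO_{\Pp}(-a)\oplus\cO_{\Pp}$ relative to both fibers, with a lift of the $\com^*$-action chosen so that the specialization $s_3=\frac{1}{a}(s_1+s_2)$ turns the rubber factor over $0$ into the holomorphic-symplectic case $s_1=-s_2$. The insertion-free series $\mathsf{Z}^{(-a,0),\mathbf{T}}_{d,\eta^0,\eta^\infty}$ is already known to be rational by \cite{moop,mpt}; $\mathbf{T}$-localization writes it as $\sum_\mu \mathsf{S}^\mu_{\eta^0}|_{\ldots}\cdot\bW^{(-a,0)}_\mu\cdot\mathsf{S}^\mu_{\eta^\infty}$, the first factor collapses to the (invertible, monomial in $q$) intersection pairing by holomorphic-symplectic vanishing, the edge term is a nonzero monomial in $q$, and one solves for $\mathsf{S}^\mu_{\eta^\infty}$ at the specialization. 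If you want to salvage your approach through \cite{hilb1,hilb2}, you would need to replace the resolvent claim by the actual QDE satisfied by $\mathsf{S}$ and then prove that its solutions are rational exactly at the resonant values $s_3=\frac{1}{a}(s_1+s_2)$ — a true but substantially harder statement than the formal manipulation you propose.
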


\begin{proof}
Let $\com^*$ act on $\PP^1$ with tangent weights $-s_3$ and $s_3$
at $0,\infty \in \PP^1$ respectively.
Lift the $\com^*$-action to $\cO_{\Pp}(-a)$ 
with fiber weights{\footnote{Remember, weights on the
coordinate functions are the opposite of the weights on the fibers.}} $as_3$ and $0$ over $0,\infty\in \PP^1$.
Lift $\com^*$ to $\cO_{\Pp}$ with fiber
weights $0$ and $0$ over $0,\infty\in \PP^1$.
The $(-a,0)$-tube is the geometry of total
space of
\begin{equation} \label{gttr}
\cO_{\Pp}(-a) \oplus \cO_{\Pp} \rightarrow \Pp
\end{equation}
relative to the fibers over both $0,\infty \in \Pp$.

The 2-dimensional torus $T$ acts on the $(-a,0)$-tube
as before by scaling the line summands. For
$$\mathbf{T}=T \times \com^* ,$$
we obtain a $\mathbf{T}$-action on the $(-a,0)$-tube.
Define the generating series of $\mathbf{T}$-equivariant integrals
\begin{equation} \label{hllw} 
{\mathsf Z}^{(-a,0),\mathbf{T}}_{d,\eta^0,\eta^\infty} 
= \sum_{n} q^{n}
\Big\langle \CC_{\eta^0} \ \Big| \ 1 \ \Big|\ \CC_{\eta^\infty} 
\Big\rangle_{n,d}^{(-a,0)}\ \in \Q(s_1,s_2,s_3)((q))\ 
\end{equation}
where the superscript $(-a,0)$ refers to the geometry \eqref{gttr}.

The series ${\mathsf Z}^{(-a,0),\mathbf{T}}_{d,\eta^0,\eta^\infty} $ 
has no insertions. Hence, the results of 
\cite{moop,mpt} show 
${\mathsf Z}^{(-a,0),\mathbf{T}}_{d,\eta^0,\eta^\infty}$
is actually the Laurent  series associated to a rational function
in $\Q(q,s_1,s_2,s_3)$.
The $\mathbf{T}$-equivariant localization formula yields
$${\mathsf Z}^{(-a,0),\mathbf{T}}_{d,\eta^0,\eta^\infty} =
\sum_{|\mu|=d} \mathsf{S}_{\eta^0}^\mu\Big|_{s_1=s_1-as_3,s_2,s_3=-s_3} \cdot \bW^{(-a,0)}_\mu \cdot \mathsf{S}_{\eta^\infty}^\mu
\ .$$
The formula for the edge term  $\bW^{(-a,0)}_\mu$ can be found in Section 4.6
of \cite{pt2}.

Next, we consider the evaluation of the three terms of the
localization formula at
\begin{equation}\label{jjttf}
s_3= \frac{1}{a}({s_1+s_2})\ .
\end{equation}
After evaluation, the first term becomes
\begin{equation} \label{oldd}
\mathsf{S}_{\eta^0}^\mu\Big|_{s_1=-s_2,s_2,s_3=-s_3}
\end{equation}
which only has $q^d$ terms by holomorphic symplectic vanishing \cite{mpt,lcdt}.
The evaluation of $\bW^{(-a,0)}_\mu$ at \eqref{jjttf} is easily
seen to be well-defined and nonzero by inspection of the formulas in
Section 4.6 of \cite{pt2}.
The $q$ dependence of $\bW^{(-a,0)}_\mu$ is monomial.
The evaluation of the third term
$ \mathsf{S}_{\eta^\infty}^\mu\ $
at \eqref{jjttf} is well-defined by  Lemma \ref{plle}.
We conclude the evaluation of ${\mathsf Z}^{(-a,0),\mathbf{T}}_{d,\eta^0,\eta^\infty} $
at \eqref{jjttf} is a well-defined rational function in
$\Q(q,s_1,s_2)$.

By the invertibility of \eqref{oldd} and the edge terms, 
$\mathsf{S}_{\eta^\infty}^\mu$ must also be a rational function in
$\Q(q,s_1,s_2)$ after the evaluation \eqref{jjttf}.
\end{proof}


\subsection{Twisted cap}
The twisted $(a_1,a_2)$-cap is the geometry of the total space of
\begin{equation} \label{gttrr}
\cO_{\Pp}(a_1) \oplus \cO_{\Pp}(a_2) \rightarrow \Pp
\end{equation}
relative to the fiber over  $\infty \in \Pp$.

We lift the $\com^*$-action on $\Pp$ to $\cO_{\Pp}(a_i)$ 
with fiber weights $0$ and $-a_is_3$ over $0,\infty\in \PP^1$.
The 2-dimensional torus $T$ acts on the $(a_1,a_2)$-cap
by scaling the line summands, so
we obtain a $\mathbf{T}$-action on the $(a_1,a_2)$-cap.
Define the generating series of $\mathbf{T}$-equivariant integrals
\begin{multline*}  
{\mathsf Z}^{(a_1,a_2)}_{d,\eta} \left(   \prod_{j=1}^k \tau_{i_j}([0])
\right)^{\mathbf{T}}
= \\ 
\sum_{n} q^{n}
\left\langle \prod_{j=1}^k \tau_{i_j}([0]) \   \Bigg|\ \CC_{\eta} 
\right\rangle_{n,d}^{(a_1,a_2)}\  \in \Q(s_1,s_2,s_3)((q))\ 
\end{multline*}
where the superscript $(a_1,a_2)$ refers to the geometry \eqref{gttrr}.

\begin{prop}
\label{ctttt} 
$
{\mathsf Z}^{(a_1,a_2)}_{d,\eta} 
\left(   \prod_{j=1}^k \tau_{i_j}([0])
\right)^{\mathbf{T}}$
is the 
Laurent expansion in $q$ of a rational function in 
$\mathbb{Q}(q,s_1,s_2,s_3)$.
\end{prop}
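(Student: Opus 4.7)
My plan is to reduce Proposition \ref{ctttt} to Proposition \ref{cttt} by applying a $\mathbf{T}$-equivariant degeneration formula that splits the twisted cap into a standard cap carrying the descendents and a tube carrying the twisting. Specifically, I would degenerate the base $\PP^1$ of the $(a_1,a_2)$-cap into a nodal chain $\PP^1 \cup \PP^1$, placing the $\mathbf{T}$-fixed point $0$ (and therefore every descendent insertion $\tau_{i_j}([0])$) on the left component and the relative divisor $N_\infty$ on the right component. By distributing the degrees so that the left component acquires the trivial bundle and the right component absorbs the full twist, the left component becomes the standard $(0,0)$-cap and the right component becomes the $(a_1,a_2)$-tube (relative over both its endpoints).

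The $\mathbf{T}$-equivariant degeneration formula then yields
\[
\mathsf{Z}^{(a_1,a_2)}_{d,\eta}\!\left(\prod_{j=1}^k \tau_{i_j}([0])\right)^{\!\mathbf{T}}
= \sum_{|\nu|=d}
\mathsf{Z}^{\mathsf{cap}}_{d,\nu}\!\left(\prod_{j=1}^k \tau_{i_j}([0])\right)^{\!\mathbf{T}}
\cdot g^{\nu\nu'}\cdot
\mathsf{Z}^{(a_1,a_2),\mathsf{tube}}_{d,\nu',\eta}\ ,
\]
where $g^{\nu\nu'}$ is the inverse of the Nakajima intersection pairing on $\mathrm{Hilb}(\C^2,d)$. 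Each factor can be handled separately: the first factor is rational in $\mathbb{Q}(q,s_1,s_2,s_3)$ by Proposition \ref{cttt}; the pairing $g^{\nu\nu'}$ lies in $\mathbb{Q}(s_1,s_2)$; and the third factor carries no descendent insertions, parametrizing stable pairs on a local toric $3$-fold with two toric relative divisors, hence is rational in $\mathbb{Q}(q,s_1,s_2,s_3)$ by the $\mathbf{T}$-equivariant toric rationality of \cite{moop,mpt} (the same input used in Lemma \ref{hyy3} for the $(-a,0)$-tube generalizes directly to arbitrary twists $(a_1,a_2)$). Summing over $\nu$ produces a rational function in $\mathbb{Q}(q,s_1,s_2,s_3)$, as claimed.

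The step I expect to require the most care is verifying the full $\mathbf{T}$-equivariance of the degeneration formula: the $\com^*_3$-factor of $\mathbf{T}$ scales the base $\PP^1$ and acquires a new $\mathbf{T}$-fixed point at the node of the degeneration, and one must confirm that the weights at the node and at the original points $0,\infty$ are compatible with the $\mathbf{T}$-action on both the $(0,0)$-cap and the $(a_1,a_2)$-tube as fixed in Section \ref{legger}. Once this setup is in place, the splitting of descendents is automatic because $[0] \in H^*_{\mathbf{T}}(\PP^1,\Z)$ is supported away from the node and therefore lands entirely on the cap component, and the two factors of the formula are independently rational by the results cited above.
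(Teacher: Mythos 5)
Your proposal is correct and follows essentially the same route as the paper: the paper likewise bubbles off $0\in\PP^1$ to split the twisted cap $\mathbf{T}$-equivariantly into a $(0,0)$-cap carrying all the insertions $\tau_{i_j}([0])$ and an $(a_1,a_2)$-tube with no insertions, then combines Proposition \ref{cttt} with the rationality of the tube series from \cite{mpt,lcdt} via the degeneration formula. The only cosmetic difference is that the paper's gluing term is $g^{\nu\nu'}/q^d$ rather than $g^{\nu\nu'}$, which does not affect the rationality conclusion.
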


\begin{proof}
The twisted $(a_1,a_2)$-cap admits a $\mathbf{T}$-equivariant
degeneration to a standard $(0,0)$-cap and an $(a_1,a_2)$-tube by
bubbling off $0\in \Pp$.
The insertions $\tau_{i_j}([0])$ are sent $\mathbf{T}$-equivariantly
to the non-relative point of the $(0,0)$-cap.
The rationality of $
{\mathsf Z}^{(a_1,a_2)}_{d,\eta} 
\left(   \prod_{j=1}^k \tau_{i_j}([0])
\right)^{\mathbf{T}}$
 then follows from Proposition \ref{cttt}, the 
$\mathbf{T}$-equivariant
rationality
results for the $(a_1,a_2)$-tube without insertions \cite{mpt,lcdt}, and
the degeneration formula.
\end{proof}

\section{Cancellation of poles} \label{polecan}
\subsection{Overview}
Our goal here is to prove the following result.
\begin{thm}\label{canpole}
For all integers $a>0$, the evaluation
\begin{equation*}
\bW^{\Ver}_\mu\left(\prod_{j=1}^k\tau_{i_j}([0])\right)\bigg|_{s_3=\frac{1}{a}(s_1+s_2)}
\end{equation*}
is well-defined and yields a
 Laurent polynomial in $q$ with coefficients in $\Q(s_1,s_2)$.
\end{thm}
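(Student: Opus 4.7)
The strategy is to analyze the vertex character $\bV_U$ at the specialization $s_3 = \tfrac{1}{a}(s_1+s_2)$ and reduce the theorem to combinatorial properties of the net multiplicity of \emph{diagonal} weights in $\bV_U$. A diagonal weight is one of the form $(n_1,n_2,n_3) = m(1,1,-a)$ for $m \in \Z$: these are precisely the weights whose linear forms $n_1 s_1 + n_2 s_2 + n_3 s_3$ vanish at the specialization. The descendent factors $\text{ch}_{2+i_j}\bigl(\FFF_U\cdot (1-t_1)(1-t_2)(1-t_3)\bigr)$ are polynomial in $s_1,s_2,s_3$ and produce no singular behaviour, so all of the interesting analysis is in the Euler factor $e(-\bV_U)$.

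Let $c_m$ denote the coefficient of $t_1^m t_2^m t_3^{-am}$ in the Laurent polynomial $\bV_U$. The contribution of the diagonal weights to $e(-\bV_U)$ factors as
\begin{equation*}
\Bigl(\prod_{m\neq 0} (-m)^{c_m}\Bigr)\cdot (s_1+s_2-a s_3)^{\sum_m c_m},
\end{equation*}
whose value at $s_3 = \tfrac{1}{a}(s_1+s_2)$ is zero, finite nonzero, or a pole according as $\sum_m c_m$ is positive, zero, or negative. The number $\sum_m c_m$ is the virtual rank of the $K$-invariant part of $\bV_U$, where $K\subset\mathbf{T}$ is the codimension-one subtorus cut out by the specialization. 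Theorem \ref{canpole} thus reduces to two combinatorial statements: (i) $\sum_m c_m \geq 0$ for every finite $\T$-invariant $Q_U \subset M_3/\langle 1\rangle$, giving well-definedness and pole cancellation; and (ii) $\sum_m c_m > 0$ for all but finitely many $Q_U$, giving the Laurent polynomial property in $q$, since any $Q_U$ with $\sum_m c_m > 0$ contributes zero after specialization.

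I would attack (i) and (ii) via the explicit formula
\begin{equation*}
\bV_U = \FFF_U - \frac{\overline{\FFF}_U}{t_1 t_2 t_3} + \FFF_U \overline{\FFF}_U \frac{(1-t_1)(1-t_2)(1-t_3)}{t_1 t_2 t_3} + \frac{\GGG_\mu(t_1,t_2)}{1-t_3}
\end{equation*}
from Section \ref{ttt}. Geometrically, $\sum_m c_m$ is the virtual dimension of the $K$-equivariant deformation theory of the $\T$-fixed (hence $K$-fixed) pair corresponding to $Q_U$, and its non-negativity should reflect the existence of a $K$-fixed substack of the moduli space. Combinatorially, the quadratic term $\FFF_U \overline{\FFF}_U(1-t_1)(1-t_2)(1-t_3)/(t_1 t_2 t_3)$ supplies the bulk of the diagonal multiplicity, and the plan is to isolate its $K$-invariant rank, then correct by the linear part $\FFF_U - \overline{\FFF}_U/(t_1 t_2 t_3)$ and the boundary term $\GGG_\mu(t_1,t_2)/(1-t_3)$ explicitly in terms of the shape of $\mu$ and of the boxes of the 3D partition underlying $Q_U$.

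The main obstacle is that the naive Calabi--Yau duality $t_i \mapsto t_i^{-1}$ twisted by $-(t_1 t_2 t_3)^{-1}$ shifts weights by $(-1,-1,-1)$, which for $a>0$ does not preserve the set of diagonal weights; hence the needed positivity cannot be deduced from any simple palindromic symmetry on the diagonal sub-character of $\bV_U$. A genuine combinatorial analysis on the 3D partition underlying $Q_U$ is required, together with careful truncation of the infinite series $\GGG_\mu(t_1,t_2)/(1-t_3)$, which becomes finite only after its collision with the linear term. Establishing (ii) further demands a sharp lower bound on the growth of the $K$-invariant rank of the quadratic term as $\ell(Q_U)\to\infty$, reflecting the proliferation of off-diagonal boxes in the partition. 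This combinatorial bookkeeping is the central difficulty.
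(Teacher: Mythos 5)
Your identification of the specialization locus $u_3=s_1+s_2-as_3=0$, of the diagonal weights $m(1,1,-a)$ in $\bV_U$ as the sole source of singular behaviour, and of the net diagonal multiplicity as the quantity controlling the order of pole or zero of $e(-\bV_U)$ all agree with the paper's setup. However, your claim (i) --- that this multiplicity has the favourable sign for \emph{every} individual $Q_U$, so that each fixed-point contribution $\bw_{i_1,\ldots,i_k}(Q_U)$ is separately regular at $s_3=\frac{1}{a}(s_1+s_2)$ --- is false, and it is exactly where the real difficulty of the theorem sits. In the paper's notation, the order of pole of $e(-\bV_\sigma)$ along $u_3=0$ is an integer $\kappa_0$ depending only on $f=(1-t_3)\FFF_U|_{t_3=(t_1t_2)^{1/a}}$, and $\kappa_0$ is genuinely positive for many box configurations (this can only occur, and does occur, once some diagonal $\mu_\delta$ of the outgoing partition contains at least two boxes). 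The paper explicitly treats the case $\kappa_0>0$ at length; if every term were regular, Section \ref{polecan} would be a few lines. Your geometric heuristic --- that the $K$-fixed virtual rank should be non-negative because a $K$-fixed substack exists --- does not hold: the $K$-invariant part of a virtual tangent space can have negative rank when obstructions dominate deformations, and that is precisely what produces the poles. So no term-by-term argument of the kind you propose can close the proof.

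What the paper does instead is a cancellation argument. The fixed points are grouped into the finite sets $\SSS_f$ according to the value of $(1-t_3)\FFF_U|_{t_3=(t_1t_2)^{1/a}}$; all members of $\SSS_f$ share the same power of $q$ and the same $\kappa_0$, and they are parametrized by admissible permutations $\sigma\in\Sym_\mu$ acting along the diagonals of $\mu$. The singular part of the sum over $\SSS_f$ is then rewritten as an alternating sum $\sum_{\sigma}\sgn(\sigma)\psi(\sigma)Z(\sigma)$ with $\deg(\psi Z)<\sum_\delta\tfrac12|\mu_\delta|(|\mu_\delta|-1)$, which forces it to vanish; producing the polynomial $\psi$ of controlled degree that also vanishes at inadmissible $\sigma$ requires the division statement of Proposition \ref{division} and the rank computation of Lemma \ref{ranks}. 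The finiteness statement (your (ii)) is likewise proved for the grouped sums, via the case analysis bounding the exponents $e_\delta(j)$, not for individual $Q_U$. To repair your proposal you would have to abandon term-by-term regularity and introduce some such grouping together with a mechanism for the resulting cancellation.
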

We regard the partition $\mu$, the 
descendent factor $\prod_{j=1}^k\tau_{i_j}([0])$, and 
the integer $a$ as fixed throughout Section \ref{polecan}.

Recall  $\bW^{\Ver}_\mu\left(\prod_{j=1}^k\tau_{i_j}([0])\right)$ is 
defined as an infinite sum over the fixed loci $Q_U$,
\begin{equation}\label{infinite sum}
\bW^{\Ver}_\mu\left(\prod_{j=1}^k\tau_{i_j}([0])\right) = \left(\frac{1}{s_1s_2}\right)^k\sum_{Q_U}\bw_{\tau_{i_1},\ldots,\tau_{i_k}}(Q_U) q^{l(Q_U)+|\mu|}.
\end{equation} 
The $Q_U$ are determined by $\FFF_U$, the weight of the corresponding box configuration. Although $\FFF_U$ is just a Laurent series in $t_1,t_2,t_3$, the
product
$(1-t_3)\FFF_U$ 
is a Laurent polynomial. 

Our approach to proving Theorem~\ref{canpole} 
is to break (\ref{infinite sum}) into finite sums 
based on the Laurent polynomial 
$$(1-t_3)\FFF_U|_{t_3=(t_1t_2)^{\frac{1}{a}}}\ .$$ 
For any Laurent polynomial $f\in\Z[t_1,t_2,(t_1t_2)^{-\frac{1}{a}}]$, define 
\begin{equation*}
\SSS_f = \left\{Q_U \ \bigg| \ (1-t_3)\FFF_U|_{t_3=(t_1t_2)^{\frac{1}{a}}} = f
\right\} \ .
 \end{equation*}
Theorem~\ref{canpole} follows from the following result
 regarding the subsums of (\ref{infinite sum}) 
corresponding to the sets $\SSS_f$.
\begin{prop}\label{vanishing}
Let $f\in\Z[t_1,t_2,(t_1t_2)^{-\frac{1}{a}}]$ be a Laurent polynomial. The evaluation
\begin{equation*}
\left(\sum_{Q_U\in\SSS_f}\bw_{i_1,\ldots,i_k}(Q_U)\right)\bigg|_{s_3=\frac{1}{a}(s_1+s_2)}
\end{equation*}
is well-defined. Moreover, the evaluation vanishes for all but finitely many choices of $f$.
\end{prop}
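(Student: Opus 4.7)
The plan is to fix $f$ and analyze the sum $\sum_{Q_U \in \SSS_f} \bw_{i_1,\ldots,i_k}(Q_U)$ by separating the contributions that depend on $Q_U$ from those depending only on $f$, then establish the cancellation of poles. The Chern character factors $\ch_{2+i_j}(\FFF_U(1-t_1)(1-t_2)(1-t_3))$ are polynomials in $s_1, s_2, s_3$, since $(1-t_3)\FFF_U$ is a Laurent polynomial (the infinite support of $\FFF_U$ lies only along the $t_3$-axis). Since the specialization $t_3 = (t_1 t_2)^{1/a}$ commutes with taking Chern characters,
\[
\ch_{2+i_j}\big(\FFF_U(1-t_1)(1-t_2)(1-t_3)\big)\big|_{s_3 = \frac{1}{a}(s_1+s_2)} \;=\; \ch_{2+i_j}\big(f \cdot (1-t_1)(1-t_2)\big),
\]
so each Chern character factor depends only on $f$ and is constant on $\SSS_f$. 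Consequently, all variation of $\bw$ on $\SSS_f$ is carried by $e(-\bV_U)$, and the task reduces to showing $\sum_{Q_U \in \SSS_f} e(-\bV_U)\big|_{s_3 = \frac{1}{a}(s_1+s_2)}$ is well-defined.

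Using the formula
\[
\bV_U \;=\; \FFF_U - \frac{\overline{\FFF}_U}{t_1t_2t_3} + \FFF_U \overline{\FFF}_U \frac{(1-t_1)(1-t_2)(1-t_3)}{t_1t_2t_3} + \frac{\GGG_\mu(t_1,t_2)}{1-t_3}
\]
from Section~\ref{ttt}, I would decompose $\bV_U$ into a piece $\bV_U^{f}$ depending only on $f$ (hence constant on $\SSS_f$, contributing a factor to $e(-\bV_U)$ that is well-defined at the specialization and can be pulled outside the sum) and a residual piece $\bV_U^{\mathrm{res}}$ housing the problematic weights. The potential poles come precisely from monomials $t_1^{-k}t_2^{-k}t_3^{ak}$ (and their conjugates under the bar involution) appearing with positive net multiplicity in $\bV_U^{\mathrm{res}}$; their weights vanish at $s_3 = \frac{1}{a}(s_1+s_2)$.

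The heart of the argument, and the main obstacle, is to establish cancellation of these residual poles within each $\SSS_f$. Two configurations $Q_U, Q_U' \in \SSS_f$ differ by a character $\FFF_{Q_U} - \FFF_{Q_U'}$ lying in the kernel of the substitution $t_3^a = t_1 t_2$, and this kernel carries an explicit $\Z/a$-type combinatorial structure, trading $a$ units of shift along the $t_3$-axis for unit shifts along each of $t_1$ and $t_2$. The strategy is to build a signed matching (ideally an involution) on $\SSS_f$ whose orbits organize the residual Euler contributions into pole-free packets. A delicate point, distinguishing this setting from prior rationality arguments in the vertex formalism, is that the matching must both respect the constraint of fixed $f$ and interact compatibly with the Chern character factors pulled out earlier; failure of such compatibility would force a descendent-dependent matching, which is undesirable.

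Finally, for the finiteness claim, the signed matching is designed so that it exhausts $\SSS_f$ for all but finitely many $f$: outside a finite combinatorial family (determined by $\mu$ and the descendent insertions), every element of $\SSS_f$ has a partner whose residual Euler contribution cancels it exactly, yielding a zero evaluated sum. The unmatched elements, present only for finitely many $f$, account for the nonzero contributions and give the desired finiteness, completing the plan for proving Proposition~\ref{vanishing}.
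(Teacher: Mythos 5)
There are two genuine gaps here, and together they leave the entire content of the proposition unproved. First, your reduction to the well-definedness of $\sum_{Q_U\in\SSS_f}e(-\bV_U)$ does not work. It is true that $\FFF_U(1-t_1)(1-t_2)(1-t_3)$ specializes to $f\cdot(1-t_1)(1-t_2)$ at $t_3=(t_1t_2)^{1/a}$, so the \emph{value} of each descendent factor at the specialization depends only on $f$. But $e(-\bV_U)$ has a pole of order $\kappa_0$ along $u_3=s_1+s_2-as_3=0$, so the product $e(-\bV_U)\cdot\prod_j\ch_{2+i_j}(\cdots)$ is evaluated by extracting the coefficient of $u_3^{\kappa_0}$ in the Taylor expansion of the descendent factor in $u_3$ --- and the higher $u_3$-derivatives of that factor \emph{do} vary over $\SSS_f$. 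You cannot pull the descendent factors outside the sum; even if $\sum e(-\bV_U)$ were regular, the full sum could still have a pole when $\kappa_0\ge 2$. This is precisely why the paper's proof carries polynomials $Z_i(\sigma)$ of degree at most $\kappa$ (recording the $\sigma$-dependence of the differentiated descendent factors) through the whole argument and must prove $\sum_\sigma\phi(\sigma)Z(\sigma)=0$ for \emph{all} polynomials $Z$ of degree less than $\kappa_0$, not just the constant ones.

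Second, the ``signed matching/involution'' that you designate as the heart of the argument is not constructed, and the actual cancellation is not a pairwise phenomenon. The paper parametrizes $\SSS_f$ by admissible permutations $\sigma\in\Sym_\mu$ acting on the diagonals of $\mu$, writes the leading coefficient as $\phi(\sigma)=\sgn(\sigma)\psi(\sigma)$ for a polynomial $\psi$ that vanishes at all inadmissible $\sigma$, and then invokes the fact that an alternating sum $\sum_{\sigma\in\Sym_\mu}\sgn(\sigma)P(\sigma)$ vanishes whenever $\deg P<\sum_\delta\tfrac12|\mu_\delta|(|\mu_\delta|-1)$ --- a global degree bound over the full product of symmetric groups, not an orbit-by-orbit cancellation. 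Producing $\psi$ with the required degree bound is itself nontrivial: it needs the division result (Proposition~\ref{division}, resting on the rank computation of Lemma~\ref{ranks}) to clear the denominators $S_\delta$ from $\phi_0$. Likewise, the finiteness claim is not a byproduct of a matching but a separate case analysis (Cases I--IV) bounding the exponents $e_\delta(j)$ so that outside a finite set either $\SSS_f=\emptyset$ or $\psi$ can be replaced by a strictly lower-degree $\psi'$. None of these ingredients appears in your proposal, so the plan as written cannot be completed without essentially discovering the paper's argument.
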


\subsection{Notation and Preliminaries}

We introduce here the notation and conventions required
 to analyze the sums appearing in Proposition~\ref{vanishing}.

First, we view the partition $\mu$ as a subset of $\Z_{\ge 0}^2$.
The lattice points, 
 for which we use the coordinates $(i,j)\in\mu$,
correspond to the lower left corners of
the boxes of $\mu$.
  We also write 
$$(\delta; j) = (i,j)$$ for $\delta = i-j$.

The points $(\delta; j)\in\mu$ for fixed $\delta$ lie on a single diagonal. 
The diagonals will play an important role. Let 
 $\mu_\delta = \{j \mid (\delta; j)\in\mu\}$, and  
define
\begin{equation*}
\Sym_\mu = \prod_{\delta\in\Z}\Sym(\mu_\delta),
\end{equation*}
where $\Sym(S)$ is the group of permutations of a set $S$. Thus,
 $\Sym_\mu$ may be viewed 
as the group of permutations of $\mu$ which move points only 
inside their diagonals. 
Let $$\sgn:\Sym_\mu\to\{\pm 1\}$$ be the sign of the permutation of $\mu$.

Recall the Laurent polynomials $(1-t_3)\FFF_U$ are of the form
\begin{equation*}
(1-t_3)\FFF_U = \sum_{(i,j)\in\mu}t_1^it_2^jt_3^{-h_U(i,j)},
\end{equation*}
where $h_U(i,j)$ is the depth of the box arrangement below $(i,j)$.

Because of our reparametrization of the partition $\mu$ and the evaluation $t_3=(t_1t_2)^{\frac{1}{a}}$,  the
following change of variables will be convenient: 
\begin{equation*}
v_1=t_1, \quad v_2=t_1t_2, \quad v_3=t_1t_2t_3^{-a}
\end{equation*}
and $u_i=e(v_i)$, so
\begin{equation*}
u_1 = s_1, \quad u_2=s_1+s_2, \quad u_3=s_1+s_2-as_3.
\end{equation*}
The evaluations under consideration are then simply $v_3=1$ and $u_3=0$. 

 From now on we will assume $\SSS_f$ to be  nonempty, so 
$$f = (1-t_3)\FFF_U|_{t_3=(t_1t_2)^{\frac{1}{a}}}$$ 
for some $Q_U$ and thus $f$ can be written in the form
\begin{equation*}
f = \sum_{(\delta; j)\in\mu}v_1^{\delta}v_2^{e_\delta(j)}
\end{equation*}
for some exponents $e_\delta(j)$. These exponents are made unique by requiring that $e_\delta(j)$ is a weakly decreasing function of $j$, for each $\delta$. We generally regard $f$ as fixed and thus do not indicate the $f$-dependence in $e_\delta(j)$.

We now classify all $Q_U\in\SSS_f$. Given any $\sigma=(\sigma_\delta)\in\Sym_\mu$, we define a function $h_\sigma: \mu \to \Z$ by
\begin{equation*}
h_\sigma(\delta; j) = a\cdot(j - e_\delta(\sigma_\delta^{-1}(j))).
\end{equation*}
When $h_\sigma$  defines a valid box arrangement, we say 
 $\sigma$ is {\it admissible}. Admissibility
 is equivalent to the following conditions on $\sigma$:
\begin{align*}
\sigma_0(j)&\ne 0 \text{ if } e_0(j)>0 \\
\sigma_{\delta+1}(j)&\ne\sigma_{\delta}(k) \text{ if } e_{\delta+1}(j)>e_{\delta}(k) \\
\sigma_{\delta}(j)&\ne\sigma_{\delta+1}(k)+1 \text{ if } e_{\delta}(j)>e_{\delta+1}(k)+1.
\end{align*}
For admissible $\sigma$, let $Q_\sigma$ denote the corresponding $\T$-fixed locus.

Unraveling the definitions, we compute
\begin{align*}
(1-t_3)\FFF_\sigma &= \sum_{(i,j) \in \mu}t_1^it_2^jt_3^{-h_\sigma(i-j,j)} \\
&= \sum_{(\delta; j)\in \mu}v_1^\delta v_2^j v_2^{-\frac{1}{a}h_\sigma(\delta; j)}v_3^{\frac{1}{a}h_\sigma(\delta; j)} \\
&= \sum_{(\delta; j)\in \mu}v_1^\delta v_2^{e_\delta(\sigma_\delta^{-1}(j))} v_3^{j - e_\delta(\sigma_\delta^{-1}(j))} \\
&= \sum_{(\delta; j)\in \mu}v_1^\delta v_2^{e_\delta(j)} v_3^{\sigma_\delta(j) - e_\delta(j)}.
\end{align*}
We conclude  $(1-t_3)\FFF_\sigma|_{v_3=1} = f$ and $Q_\sigma\in\SSS_f$. In fact, 
a direct examination shows
every $Q_{U'}\in\SSS_f$ can be obtained as $Q_\sigma$
for some admissible
$\sigma\in \Sym_\mu$.
If we let $\Sym_\mu^0$ be the subgroup of $\Sym_\mu$ consisting of elements $\tau$ such that $e_\delta(\tau_\delta(j))=e_\delta(j)$, then $Q_\sigma = Q_{\sigma'}$ if and only if $\sigma^{-1}\sigma'\in \Sym_\mu^0$.

We thus can replace the sum over $Q_U\in\SSS_f$ with a sum over admissible $\sigma\in\Sym_\mu$:
\begin{multline}\label{kk449}
\left(\sum_{Q_U\in\SSS_f}\bw_{i_1,\ldots,i_k}(Q_U)\right)\bigg|_{s_3=\frac{1}{a}(s_1+s_2)} = \\
 \frac{1}{|\Sym_\mu^0|}\left(\sum_{\sigma\in\Sym_\mu\text{ admissible}}\bw_{i_1,\ldots,i_k}(Q_\sigma)\right)\bigg|_{s_3=\frac{1}{a}(s_1+s_2)}.
\end{multline}
We will show the evaluation is well-defined 
by choosing $\kappa_0$ such that each 
term $\bw_{i_1,\ldots,i_k}(Q_\sigma)$ in the above sum has order of vanishing along $u_3=0$ at least $-\kappa_0$, and then showing 
\begin{equation}\label{differentiated}
\sum_{\sigma\in\Sym_\mu\text{ admissible}}\left(\frac{\partial}{\partial u_3}\right)^\kappa(u_3^{\kappa_0}\bw_{i_1,\ldots,i_k}(Q_\sigma))\bigg|_{u_3=0} = 0
\end{equation}
for $0\le\kappa<\kappa_0$.

The second part of Proposition \ref{vanishing},
the vanishing of the evaluation \eqref{kk449}
 for all but finitely many $f$, is
 then equivalent to proving that (\ref{differentiated}) holds for $\kappa=\kappa_0$ (for all but finitely many $f$).

In order to prove these vanishing results, we will need to analyze the dependence of the terms $\left(\frac{\partial}{\partial u_3}\right)^\kappa(u_3^{\kappa_0}\bw_{i_1,\ldots,i_k}(Q_\sigma))\bigg|_{u_3=0}$ on the permutation $\sigma\in\Sym_\mu$. 
For each $\kappa$, we will find
 the corresponding term is equal to a polynomial in the values $\sigma_\delta(j)$ of relatively low degree which 
 vanishes at all inadmissible permutations $\sigma$.

Let  $\Q[\sigma]$ and $\Q(\sigma)$ denote the
ring of polynomials and the field of rational functions respectively 
in the variables $\sigma_\delta(j)$. For a polynomial $P\in\Q[\sigma]$, let $\deg(P)$ be the (total) degree of $P$. For rational functions $\frac{P}{Q}\in\Q(\sigma)$, we set 
$$\deg\left(\frac{P}{Q}\right) = \deg(P)-\deg(Q).$$

We observe that if $P\in\Q[\sigma]$ has degree $\deg(P)<\sum_{\delta}\frac{1}{2}|\mu_\delta|(|\mu_\delta|-1)$, then
\begin{equation*}
\sum_{\sigma\in\Sym_\mu}\sgn(\sigma)P(\sigma) = 0,
\end{equation*}
since a nonzero alternating polynomial with respect to $\Sym_\mu$ would have to have greater degree.

\subsection{Proof of Proposition~\ref{vanishing}}

We need to study the $\sigma$-dependence of 
\begin{equation*}
\bw_{i_1,\ldots,i_k}(Q_\sigma) = e(-\bV_\sigma)\prod_{j=1}^k\ch_{2+i_j}(\FFF_\sigma\cdot(1-t_1)(1-t_2)(1-t_3)).
\end{equation*}

We begin by explicitly writing $\bV_\sigma$ in terms of $\sigma$ and the numbers $e_\delta(j)$. Recall 
\begin{equation*}
\bV_\sigma = \frac{\FFF'_\sigma -\FFF'_0}{1-t_3} 
+ \frac{\overline{\FFF'_\sigma} -\overline{\FFF'_0}}{t_1t_2(1-t_3)} 
- \frac{\FFF'_\sigma\overline{\FFF'_\sigma} - \FFF'_0\overline{\FFF'_0}}{1-t_3}(1-t_1^{-1})(1-t_2^{-1}),
\end{equation*}
where $\FFF'_\sigma = (1-t_3)\FFF_\sigma$ and $$\FFF'_0 = 
\sum_{(i,j)\in\mu}t_1^it_2^j.$$ 
In particular, $\bV_\sigma|_{v_3=1}$ does not 
depend on $\sigma$. Hence, 
the order of vanishing of $e(-\bV_\sigma)$ along $u_3=0$ is an
 integer $-\kappa_0$ independent of $\sigma$. 
Since the descendent factor is a polynomial in $u_1,u_2,u_3$, 
the order of vanishing of $\bw_{i_1,\ldots,i_k}(Q_\sigma)$ along $u_3=0$ 
is at least $-\kappa_0$. If $\kappa_0\le 0$, 
then the evaluation is well-defined on each $\bw_{i_1,\ldots,i_k}(Q_\sigma)$ 
and thus on their sum. If $\kappa_0<0$, then 
the evaluation in fact yields zero. 
So we may assume $\kappa_0\ge 0$.

We now rewrite $\bV_\sigma$ in terms of $v_1,v_2,v_3$. We find
$\bV_\sigma$ equals
\footnotesize
\begin{align*}
&\ \ \ \sum_{(\delta; j)\in\mu}\frac{v_1^\delta v_2^{e_\delta(j)}v_3^{\sigma_\delta(j) - e_\delta(j)}-v_1^\delta v_2^j}{1-(\frac{v_2}{v_3})^{\frac{1}{a}}}\\
& + \sum_{(\delta; j)\in\mu}\frac{v_1^{-\delta} v_2^{-e_\delta(j)-1}v_3^{-\sigma_\delta(j) + e_\delta(j)}-v_1^{-\delta} v_2^{-j-1}}{1-(\frac{v_2}{v_3})^{\frac{1}{a}}} \\
&- \sum_{(\delta_1; j_1),(\delta_2; j_2)\in\mu}\frac{v_1^{\delta_1-\delta_2} v_2^{e_{\delta_1}(j_1)-e_{\delta_2}(j_2)}v_3^{\sigma_{\delta_1}(j_1)-\sigma_{\delta_2}(j_2)-e_{\delta_1}(j_1)+e_{\delta_2}(j_2)}-v_1^{\delta_1-\delta_2} v_2^{j_1-j_2}}{(1-(\frac{v_2}{v_3})^{\frac{1}{a}}) \cdot(1-v_1^{-1})^{-1}(1-v_1v_2^{-1})^{-1}}.
\end{align*}
\normalsize
Let $C > 2\max(e_\delta(j))$ be a large positive integer. We break up each of the three above sums above using $C$. Then, $\bV_\sigma$ equals
\footnotesize
\begin{align*}
&\ \ \ 
\sum_{(\delta; j)\in\mu}\frac{v_1^\delta v_2^{e_\delta(j)}v_3^{\sigma_\delta(j) - e_\delta(j)}-v_1^\delta v_2^{-C}v_3^{\sigma_\delta(j)+C}}{1-(\frac{v_2}{v_3})^{\frac{1}{a}}} 
\\ &+ \sum_{(\delta; j)\in\mu}\frac{v_1^\delta v_2^{-C}v_3^{j+C}-v_1^\delta v_2^j}{1-(\frac{v_2}{v_3})^{\frac{1}{a}}} \\
&+ \sum_{(\delta; j)\in\mu}\frac{v_1^{-\delta} v_2^{-e_\delta(j)-1}v_3^{-\sigma_\delta(j) + e_\delta(j)}-v_1^{-\delta} v_2^{-C-1}v_3^{-\sigma_\delta(j)+C}}{1-(\frac{v_2}{v_3})^{\frac{1}{a}}} \\ &
+ \sum_{(\delta; j)\in\mu}\frac{v_1^{-\delta} v_2^{-C-1}v_3^{-j+C}-v_1^{-\delta} v_2^{-j-1}}{1-(\frac{v_2}{v_3})^{\frac{1}{a}}} \\
&- \sum_{(\delta_1; j_1),(\delta_2; j_2)\in\mu} \Bigg(
\frac{v_1^{\delta_1-\delta_2} v_2^{e_{\delta_1}(j_1)-e_{\delta_2}(j_2)}v_3^{\sigma_{\delta_1}(j_1)-\sigma_{\delta_2}(j_2)-e_{\delta_1}(j_1)+e_{\delta_2}(j_2)}}
{1-(\frac{v_2}{v_3})^{\frac{1}{a}}}
\\&
\ \ \ \ \ \ \ \ \ \ \ \ \ \ \ \ \ \ \ \ \ \ \ \ \ \ \
 -\frac{v_1^{\delta_1-\delta_2} v_2^{-C}v_3^{\sigma_{\delta_1}(j_1)-\sigma_{\delta_2}(j_2)+C}}{1-(\frac{v_2}{v_3})^{\frac{1}{a}}}\Bigg)
 \cdot(1-v_1^{-1})(1-v_1v_2^{-1}) \\
&- \sum_{(\delta_1; j_1),(\delta_2; j_2)\in\mu}\frac{v_1^{\delta_1-\delta_2} v_2^{-C}v_3^{j_1-j_2+C}-v_1^{\delta_1-\delta_2} v_2^{j_1-j_2}}{1-(\frac{v_2}{v_3})^{\frac{1}{a}}}\cdot(1-v_1^{-1})(1-v_1v_2^{-1}).
\end{align*}
\normalsize

We now expand out the above sums into monomials:
 all of the resulting terms will be of the form 
$$\pm v_1^x v_2^y v_3^{z(\sigma)},$$
 where $x$ and $y$ have no dependence on the permutation $\sigma = (\sigma_\delta)$ and $z\in\Q[\sigma]$ is a linear function of the values $\sigma_\delta(j)$. 
After separating out the monomials with $x=y=0$,  we write
\begin{equation*}
\bV_\sigma = \sum_{(c,0,0,z)\in S}c v_3^{z(\sigma)}+\sum_{\substack{(c,x,y,z)\in S \\ (x,y)\ne(0,0)}}c v_1^x v_2^y v_3^{z(\sigma)},
\end{equation*}
where $S$ is a finite set containing the data of the monomials which
appear
(with 
coefficients $c \in \mathbb{Z}$).
Then 
\begin{equation*}
e\left(-\sum_{(c,0,0,z)\in S}c v_3^{z(\sigma)}\right) = \phi(\sigma)u_3^{-\kappa_0},
\end{equation*} 
for a rational function $\phi=\phi_f\in\Q(\sigma)$ 
which will be explicitly described below.

We analyze first
 the descendent factors in $\bw_{i_1,\ldots,i_k}(Q_\sigma)$.
 The descendent terms  can be expressed in the form
\begin{multline*}
\prod_{j=1}^k\ch_{2+i_j}\left(\FFF_\sigma\cdot(1-t_1)(1-t_2)(1-t_3)\right) 
=\\
 \prod_{j=1}^k\sum_{(c',x,y,z)\in S'_j}c'(xu_1+yu_2+z(\sigma)u_3)^{2+i_j},
\end{multline*}
where the $S'_j$ are more fixed finite sets containing the data of the 
terms which appear. As before,  $z\in\Q[\sigma]$ is linear.
We then find 
\begin{multline*}
u_3^{\kappa_0}\bw_{i_1,\ldots,i_k}(Q_\sigma) = \phi(\sigma)\prod_{\substack{(c,x,y,z)\in S \\
 (x,y)\ne(0,0)}}(xu_1+yu_2+z(\sigma)u_3)^{-c}
\\
\cdot \prod_{j=1}^k\sum_{(c',x,y,z)\in S'_j}c'(xu_1+yu_2+z(\sigma)u_3)^{2+i_j}.
\end{multline*}
Differentiating the above
 product $\kappa$ times with respect to $u_3$ and then setting $u_3$ equal to $0$ is easily done. We obtain
\begin{equation*}
\left(\frac{\partial}{\partial u_3}\right)^\kappa(u_3^{\kappa_0} 
\bw_{i_1,\ldots,i_k}(Q_\sigma)) 
|_{u_3=0}
= \sum_{i\in\mathcal{I}}\phi(\sigma)Z_i(\sigma)R_i(u_1,u_2),
\end{equation*}
where $\mathcal{I}$ is an indexing set, $Z_i\in\Q[\sigma]$ has degree at most $\kappa$, and $R_i(u_1,u_2)\in\Q(u_1,u_2)$ does not depend on $\sigma$.

Proposition~\ref{vanishing} will follow from the claim that
\begin{equation}\label{cancel}
\sum_{\sigma\in\Sym_\mu\text{ admissible}}\phi(\sigma)Z(\sigma) = 0
\end{equation}
for any polynomial $Z$ of degree $\kappa < \kappa_0$ 
(or degree $\kappa=
\kappa_0$ for all but finitely many $f$). The
vanishing property \eqref{cancel} 
is purely a property of the rational function $\phi\in\Q(\sigma)$.

We will now study $\phi$ in more detail.
 The goal is to find a polynomial $\psi\in\Q[\sigma]$ of 
sufficiently low degree satisfying
 $$\phi(\sigma) = \sgn(\sigma)\psi(\sigma)$$ 
for every admissible $\sigma\in\Sym_\mu$ and 
satisfying
$\psi(\sigma) = 0$ for every inadmissible $\sigma\in\Sym_\mu$.
From the formula for $\bV_\sigma$,
we can describe $\phi\in\Q(\sigma)$ explicitly as a product of linear factors:
\footnotesize
\begin{align*}
\phi(\sigma) = 
&\left(\prod_{\substack{(0; j)\in\mu \\ e_0(j)>0}}\sigma_0(j)\right)
\left(\prod_{\substack{(0; j)\in\mu \\ j>0}}j\right)^{-1}
\left(\prod_{\substack{(0; j)\in\mu \\ e_0(j)<-1}}(-\sigma_0(j)-1)\right) 
\\ &
\left(\prod_{\substack{(\delta; j_1),(\delta; j_2)\in\mu \\ e_\delta(j_1)>e_\delta(j_2)}}(\sigma_\delta(j_1)-\sigma_\delta(j_2))\right)^{-1} 
\left(\prod_{\substack{(\delta; j_1),(\delta; j_2)\in\mu \\ j_1>j_2}}(j_1-j_2)\right)
\\ &
\left(\prod_{\substack{(\delta; j_1),(\delta; j_2)\in\mu \\ e_\delta(j_1)>e_\delta(j_2)+1}}(\sigma_\delta(j_1)-\sigma_\delta(j_2)-1)\right)^{-1}
\left(\prod_{\substack{(\delta; j_1),(\delta; j_2)\in\mu \\ j_1>j_2+1}}(j_1-j_2-1)\right) \\
&\left(\prod_{\substack{(\delta+1; j_1),(\delta; j_2)\in\mu \\ e_{\delta+1}(j_1)>e_\delta(j_2)}}(\sigma_{\delta+1}(j_1)-\sigma_{\delta}(j_2))\right)
\left(\prod_{\substack{(\delta+1; j_1),(\delta; j_2)\in\mu \\ j_1>j_2}}(j_1-j_2)\right)^{-1} \\
&\left(\prod_{\substack{(\delta; j_1),(\delta+1; j_2)\in\mu \\ e_{\delta}(j_1)>e_{\delta+1}(j_2)+1}}(\sigma_{\delta}(j_1)-\sigma_{\delta+1}(j_2)-1)\right)
\left(\prod_{\substack{(\delta; j_1),(\delta+1; j_2)\in\mu \\ j_1>j_2+1}}(j_1-j_2-1)\right)^{-1}.
\end{align*}
\normalsize
The degree of $\phi$ is easily computed to be $-\kappa_0$, since there are the same number of constant factors appearing on the numerator and denominator in the above expression.

\begin{lem} \label{frrg} We have
\begin{equation*}
\frac{\prod_{\substack{(\delta; j_1),(\delta; j_2)\in\mu \\ j_1>j_2}}(j_1-j_2)}{\prod_{\substack{(\delta; j_1),(\delta; j_2)\in\mu \\ e_\delta(j_1)>e_\delta(j_2)}}(\sigma_\delta(j_1)-\sigma_\delta(j_2))} = \pm\sgn(\sigma)\prod_{\substack{(\delta; j_1),(\delta; j_2)\in\mu \\ e_\delta(j_1)=e_\delta(j_2) \\ j_1>j_2}}(\sigma_\delta(j_1)-\sigma_\delta(j_2))
\end{equation*}
for every $\sigma\in\Sym_\mu$.
\end{lem}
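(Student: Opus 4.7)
The plan is to observe that both sides of the identity factor as products over the diagonals $\delta\in\Z$. Indeed, $\sgn(\sigma)=\prod_\delta\sgn(\sigma_\delta)$ since $\Sym_\mu=\prod_\delta\Sym(\mu_\delta)$, and each of the four other products in the lemma is already indexed by pairs lying in a single diagonal. So it suffices to prove, for each fixed $\delta$, the one-diagonal statement with $S=\mu_\delta$, $e=e_\delta$, and $\pi=\sigma_\delta\in\Sym(S)$:
\[
\frac{\prod_{j_1>j_2\in S}(j_1-j_2)}{\prod_{\substack{j_1,j_2\in S\\ e(j_1)>e(j_2)}}(\pi(j_1)-\pi(j_2))}=\pm\sgn(\pi)\prod_{\substack{j_1>j_2\in S\\ e(j_1)=e(j_2)}}(\pi(j_1)-\pi(j_2)).
\]

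The main input is the Vandermonde identity $\prod_{j_1>j_2}(\pi(j_1)-\pi(j_2))=\sgn(\pi)\prod_{j_1>j_2}(j_1-j_2)$, valid because $\pi$ permutes $S$. The next step is to partition the set of ordered pairs $(j_1,j_2)$ with $j_1\ne j_2$ in $S$ into same-level pairs (those with $e(j_1)=e(j_2)$) and different-level pairs, using the level sets of $e$. Since $e$ is weakly decreasing by our normalization, a pair $(j_1,j_2)$ with $e(j_1)>e(j_2)$ forces $j_1<j_2$ and different levels; conversely every different-level pair with $j_1<j_2$ has $e(j_1)>e(j_2)$. Thus the denominator of the left-hand side is precisely $\prod_{j_1<j_2,\text{ diff lev}}(\pi(j_1)-\pi(j_2))$, which differs from $\prod_{j_1>j_2,\text{ diff lev}}(\pi(j_1)-\pi(j_2))$ only by an overall sign (namely $(-1)^{|T|}$, where $|T|$ is the number of unordered different-level pairs).

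Splitting the Vandermonde identity along the same-level/different-level partition and substituting this sign rearrangement then gives, up to sign, the desired formula after solving for the ratio $N/D$ on the left. Because the right-hand side of the lemma contains a $\pm$, no careful bookkeeping of the residual sign $(-1)^{|T|}\sgn(\pi)$ is actually required; the main (and essentially only) obstacle is correctly identifying the different-level ordered pairs $(j_1,j_2)$ with $e(j_1)>e(j_2)$ as the ``reversed'' different-level pairs, which relies crucially on the weak monotonicity of $e_\delta$ in $j$.
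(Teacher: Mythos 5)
Your argument is correct and is essentially the paper's own (one-line) proof spelled out in full: the paper simply says the identity follows by ``cancelling equal terms on the left side,'' which amounts to exactly your diagonal-by-diagonal reduction, the identification (via weak monotonicity of $e_\delta$) of the denominator's index set with the reversed different-level pairs, and the Vandermonde sign identity, with the $\pm$ absorbing the residual sign.
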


\begin{proof}
The formula is obtained by cancelling equal terms on the left side.
\end{proof}

Suppose that $\{\delta \mid \mu_\delta \ne \emptyset\} = \{\delta \mid a\le\delta\le b\}$. By using the identity of Lemma \ref{frrg} 
and grouping terms appropriately, we find
$$\phi(\sigma) = \sgn(\sigma)\phi_0(\sigma)$$
for $\phi_0\in\Q(\sigma)$ given by
\begin{equation}
\phi_0 = XPQ\frac{\prod_{a\le \delta\le b-1}R_\delta}{\prod_{a+1 \le \delta \le b-1}S_\delta},
\end{equation}
where
\begin{equation*}
P = \prod_{\substack{j\in\mu_0 \\ e_{0}(j)<-1}}(-\sigma_{0}(j)-1),\ \ \ 
Q = \prod_{\substack{j\in\mu_0 \\ e_{0}(j)>0}}\sigma_{0}(j),
\end{equation*}
\footnotesize
\begin{equation*}
R_\delta = \left(\prod_{\substack{j_1\in\mu_{\delta+1}, j_2\in\mu_{\delta} \\ e_{\delta+1}(j_1)>e_\delta(j_2)}}(\sigma_{\delta+1}(j_1)-\sigma_{\delta}(j_2))\right)\left(\prod_{\substack{j_1\in\mu_{\delta}, j_2\in\mu_{\delta+1} \\ e_{\delta}(j_1)>e_{\delta+1}(j_2)+1}}(\sigma_{\delta}(j_1)-\sigma_{\delta+1}(j_2)-1)\right),
\end{equation*}
\normalsize
\begin{equation*}
S_\delta = \prod_{\substack{j_1,j_2\in\mu_{\delta} \\ e_{\delta}(j_1)>e_\delta(j_2)+1}}(\sigma_{\delta}(j_1)-\sigma_{\delta}(j_2)-1),
\end{equation*}
and $X\in\Q[\sigma]$ is a polynomial. The total degree of the
 rational function $\phi_0$ is
\begin{equation*}
\deg(\phi)+\deg\left(\prod_{\substack{(\delta; j_1),(\delta; j_2)\in\mu \\ j_1>j_2}}(\sigma_\delta(j_1)-\sigma_\delta(j_2))\right) = -\kappa_0 + \sum_{\delta}\frac{1}{2}|\mu_\delta|(|\mu_\delta|-1).
\end{equation*}

We now require an algebraic result in order to
 convert $\phi_0$ into a polynomial.
Let $m,n\geq 0$ be integers, and 
let $$A=\Q[x_1,\ldots,x_n,y_1,\ldots,y_m].$$
Let $P$ be the collection of $n!m!$
points $$(x_1,\ldots,x_n,y_1,\ldots,y_m)\in \Q^{n+m}$$ satisfying
$\{x_1,\ldots,x_n\} = \{1,\ldots,n\}$  and 
$\{y_1,\ldots,y_m\} = \{1,\ldots,m\}$.
 Let $a_1\le a_2\le\cdots\le a_n$ be integers with $0\le a_i < i$, and set
\begin{equation*}
F = \prod_{1\le j\le a_i}(x_j-x_i+1) \in A.
\end{equation*}
The following Proposition will be proven in Section \ref{division}.

\begin{prop}\label{division}
If $G\in A$ vanishes when evaluated at every point of $P$ at which $F$ vanishes,
 then there exists $H\in A$ with $$\deg(H) \le \deg(G)-\deg(F)$$ satisying
 $G=FH$ for every point of $P$.
\end{prop}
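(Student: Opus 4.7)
\medskip

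\noindent\textbf{Proof proposal.} I plan to prove Proposition \ref{division} in two steps: first reduce to the pure-$x$ case $m=0$, then handle that case by induction on $\deg(F)=\sum_i a_i$. Let $I_x\subset\Q[x_1,\ldots,x_n]$ denote the ideal of polynomials vanishing at every $x$-permutation.

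For the reduction, since $F$ is independent of the $y_j$, the hypothesis amounts to saying that for each $y$-permutation $\tau$, the specialization $G(x,\tau)\in\Q[x]$ vanishes at every $x$-permutation where $F$ vanishes. The pure-$x$ argument, if it proceeds by polynomial division, goes through verbatim over the coefficient ring $\Q[y_1,\ldots,y_m]$. Since polynomial division by a linear form preserves total degree (a direct coefficient-by-coefficient verification, using that the divisor $\ell=x_{a_i}-x_i+1$ has total degree $1$), the resulting $H\in\Q[y][x]$ satisfies the total-degree bound $\deg(H)\le\deg(G)-\deg(F)$.

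For the pure-$x$ case, induct on $\deg(F)$. The base $F=1$ is immediate. For the inductive step, pick a linear factor $\ell=x_{a_i}-x_i+1$ of $F$ with $i$ chosen so that decrementing $a_i$ preserves the monotonicity $a_1\le\cdots\le a_n$, and write $F=\ell F'$. Perform polynomial division $G=\ell Q+R$ viewing both as polynomials in $x_i$, obtaining $R=G|_{x_i=x_{a_i}+1}$ independent of $x_i$ and $\deg(Q)\le\deg(G)-1$. Using that $G$ vanishes on the locus of $x$-permutations where $\ell=0$, argue that $R\in I_x$, so that $G\equiv\ell Q\pmod{I_x}$. The vanishing of $G$ on the zeros of $F'$ away from $\ell$ then transfers to $Q$; after handling the overlap $\{F'=\ell=0\}$ combinatorially, apply the inductive hypothesis to $Q$ and $F'$ to produce $H$ with $Q=F'H$ on $x$-permutations and $\deg(H)\le\deg(Q)-\deg(F')\le\deg(G)-\deg(F)$. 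Thus $G=FH$ on $x$-permutations.

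The main obstacle is the inductive step. The substitution $x_i=x_{a_i}+1$ reduces $G$ to a polynomial in $n-1$ variables that is evaluated on a value set depending on $x_{a_i}$, rather than on the fixed set $\{1,\ldots,n-1\}$, so the naive induction does not close. This obstacle must be resolved either by strengthening the inductive statement to allow the underlying value set to vary, or by a direct combinatorial argument that uses the staircase conditions $a_1\le\cdots\le a_n$ and $a_i<i$ to prove both $R\in I_x$ and that $Q$ satisfies the vanishing hypothesis for $F'$. The interplay between these two tasks and the bookkeeping of which $x$-permutations survive each substitution is the delicate heart of the argument.
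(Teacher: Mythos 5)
Your proposal is a plan rather than a proof, and the step you yourself flag as ``the delicate heart of the argument'' is precisely where it breaks down. In the inductive step you divide $G=\ell Q+R$ by the linear factor $\ell=x_{a_i}-x_i+1$ and assert that $R=G|_{x_i=x_{a_i}+1}$ lies in $I_x$. But the hypothesis only tells you that $G$ vanishes at \emph{permutation points} where $F$ vanishes. If $p$ is a permutation of $(1,\ldots,n)$, the point $p'$ obtained by replacing the $i$-th coordinate with $p_{a_i}+1$ is in general not a permutation (the value $p_{a_i}+1$ typically already occurs in another slot), so the hypothesis says nothing about $G(p')=R(p)$. The permutations at which $\ell$ vanishes, namely those with $\sigma(a_i)=\sigma(i)-1$, do give $R(p)=G(p)=0$, but that is far from all of $P$, so $R\in I_x$ does not follow. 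Likewise, transferring the vanishing hypothesis from $G$ to $Q$ on the locus where $F'$ and $\ell$ vanish simultaneously is left open. You acknowledge that ``the naive induction does not close'' and propose to fix it by strengthening the inductive statement or by an unspecified combinatorial argument, but without that fix there is no proof; the degree bound, which is the whole point of the Proposition, rides entirely on the unresolved step.

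For contrast, the paper avoids peeling off linear factors altogether. It takes $H$ of minimal degree interpolating $G/F$ on $\widehat{P}=\{p\in P\mid F(p)\neq 0\}$, assumes $\deg(H)>\deg(G)-\deg(F)$, and observes that $G-FH$ then vanishes on all of $P$ with top-degree part $F_0H_0$, where $F_0=\prod(x_j-x_i)$ is the leading form of $F$. The contradiction is extracted from the identity of colon ideals $\{r\mid F_0r\in I_0\}=\{r\mid Fr\in I\}_0$, which in turn rests on Lemma \ref{ranks}: the multiplication operators $m_F$ on $R/I$ and $m_{F_0}$ on $R/I_0$ have the same rank $\prod_i(i-a_i)$. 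That rank computation --- a permutation count for $m_F$ and an explicit complete-intersection presentation $J=(f_1,\ldots,f_n)$ with $f_k=\sum_{i=k}^n x_i\prod_{j=a_k+1}^{k-1}(x_j-x_i)$ for $m_{F_0}$, proved by induction on the staircase $(a_i)$ --- is the substantive content that your sketch has no counterpart for. If you want to pursue your route, the missing ingredient is an analogue of this: some mechanism that controls the interpolation degree globally rather than factor by factor.
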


If $S_{\delta+1}(\sigma)=0$ for a given $\sigma\in\Sym_\mu$ (which is then
 necessarily inadmissible), then 
\begin{equation}
\label{y34}
R_\delta(\sigma)=R_{\delta+1}(\sigma)=0.
\end{equation} 
By reindexing the permutation sets $\mu_\delta$ and $\mu_{\delta+1}$ as necessary, we can apply Proposition~\ref{division} with $G = R_\delta$ and $F = S_\delta$, since $S_{\delta}$ is of the appropriate form.\footnote{By definition,
 $e_\delta(j)$ is a weakly decreasing function of $j$. We use the {\emph{opposite}} ordering on the variables $\sigma_\delta(j)$ to write $S_\delta$ in the desired form. Explicitly, if $$\mu_\delta = \{A, A+1, \ldots, B\},$$ then we take $x_i = \sigma_\delta(B-i+1)-A+1$.} Thus for 
$a+1\le\delta\le b-1$, there exist polynomials $T_\delta\in\Q[\sigma]$ 
with  $\deg(T_\delta)\le \deg(R_\delta) - \deg(S_\delta)$
satisfying
$$T_\delta(\sigma) = \frac{R_\delta(\sigma)}{S_\delta(\sigma)}$$
for all $\sigma$ for which
 which $S_\delta(\sigma)\neq 0$. Then
\begin{equation*}
\psi = XPQR_a\prod_{a+1 \le \delta \le b-1}T_\delta \in \Q[\sigma]
\end{equation*}
has degree at most equal to that of $\phi_0$ and satisfies 
$$\sgn(\sigma)\psi(\sigma) = \sgn(\sigma)\phi_0(\sigma)=\phi(\sigma)$$
 for any admissible $\sigma$. 

 For a polynomial $\theta\in\Q[\sigma]$,  let $V(\theta)$ 
denote the set of $\sigma\in\Sym_\mu$ such that $\theta(\sigma)=0$. 
We see
\begin{align*}
V(\psi) &\supseteq V(Q)\cup V(R_a) \cup \left(\bigcup_{a+1 \le \delta \le b-1}V(T_\delta)\right) \\
&\supseteq V(Q)\cup V(R_a) \cup \left(\bigcup_{a+1 \le \delta \le b-1}(V(R_\delta)-V(S_\delta))\right) \\
&\supseteq V(Q)\cup \left(\bigcup_{a \le \delta \le b-1}V(R_\delta)\right) \\
&= \{\sigma\in\Sym_\mu \mid \sigma\text{ is not admissible}\}. 
\end{align*}
The third inclusion is by repeated application of \eqref{y34}.
We conclude $\psi$ vanishes when evaluated at any inadmissible $\sigma$.

We are finally able to evaluate the sum \eqref{cancel}. We have
\begin{equation*}
\sum_{\sigma\in\Sym_\mu\text{ admissible}}\phi(\sigma)Z(\sigma) = \sum_{\sigma\in\Sym_\mu}\sgn(\sigma)\psi(\sigma)Z(\sigma).
\end{equation*}
If $\deg(Z)<\kappa_0$, then $\deg(\psi Z)<\sum_{\delta}\frac{1}{2}|\mu_\delta|(|\mu_\delta|-1)$, and thus 
\begin{equation*}
\sum_{\sigma\in\Sym_\mu}\sgn(\sigma)\psi(\sigma)Z(\sigma) = 0.
\end{equation*} 
We have proven the evaluation of Proposition \ref{vanishing}
 is well-defined.

The second part of Proposition~\ref{vanishing} asserts the
vanishing of the evaluation for all but finitely many $f$. We will use a combination of two 
ideas to prove the assertion. 
First, if $\SSS(f) = \emptyset$, then the evaluation is trivially zero. 
Second, we replace
the polynomial $\psi$ above with another polynomial
 $\psi'$ which assumes the same values but has lower degree. Then 
$$\deg(\psi')<\deg(\phi_0) 
= -\kappa_0 + \sum_{\delta}\frac{1}{2}|\mu_\delta|(|\mu_\delta|-1).$$
So for $\deg(Z)\leq\kappa_0$,
\begin{equation*}
\sum_{\sigma\in\Sym_\mu}\sgn(\sigma)\psi'(\sigma)Z(\sigma) = 0.
\end{equation*} 

As we have seen, a choice of $f$ such that $\SSS(f)\ne\emptyset$ uniquely determines 
constants $e_\delta(j)$ weakly decreasing in $j$. We use linear inequalities in 
the constants $e_\delta(j)$ 
to describe four cases in which either $\SSS(f) = \emptyset$ or $\psi$ can be 
replaced by $\psi'$ as above. In the end, we will check that only finitely many 
possibilities avoid all four cases. The finiteness will come from giving upper 
and lower bounds for the $e_\delta(j)$. 
For the lower bound, since
$e_\delta(j)$ is weakly decreasing in $j$, we introduce the notation
\[
m_\delta = \max(\mu_\delta)
\]
and focus on the values $e_\delta(m_\delta)$.

\vspace{10pt}
\noindent {\bf Case I.}
Let $J = \max\{j\mid (\delta; j)\in\mu\text{ for some }\delta\}$ and suppose 
 $e_\delta(j) > J$ for some $(\delta; j)\in\mu$.
Then for any $\sigma\in\Sym_\mu$,
\[
h_\sigma(\delta; \sigma_\delta(j)) = a\cdot(\sigma_\delta(j)-e_\delta(j)) < 0,
\]
so $\sigma$ is not admissible. Thus $\SSS(f)=\emptyset$.

\vspace{10pt}
\noindent {\bf Case II.}
Consider the sequence 
$$e_0(0)\ge e_0(1)\ge \cdots \ge e_0(m_0).$$ 
Suppose there exists  $i\in\{0,\ldots,m_0\}$ for which 
the conditions
\begin{enumerate}
\item[$\bullet$]
$e_0(i)<-1$
\item[$\bullet$]
$i=0$ or $e_0(i)<e_0(i-1)-1$
\end{enumerate}
hold.
Then, for
admissible $\sigma\in\Sym_\mu$, the factor $\sigma_0$ must map
$\{i,\ldots,m_0\}$ to itself, as the box configuration function 
$$h_\sigma(\delta; j) = a(j-e_\delta(\sigma_\delta^{-1}(j)))$$ 
must be weakly increasing in $j$. 
The factor $P$ of $\psi$ is a multiple of
\begin{equation*}
\prod_{j=i}^{m_0}(-\sigma_0(j)-1).
\end{equation*}
Since $\frac{\psi}{P}$ vanishes at all inadmissible $\sigma$, we can take
\begin{equation*}
\psi' = \frac{\prod_{j=i}^{m_0}(-j-1)}{\prod_{j=i}^{m_0}(-\sigma_0(j)-1)}\psi,
\end{equation*}
and then $\psi'(\sigma)=\psi(\sigma)$ at all $\sigma\in\Sym_\mu$.
We have $\deg(\psi')<\deg(\psi)$, as desired.

\vspace{10pt}
\noindent {\bf Case III.}
Suppose $\delta \ge 0$ and $e_{\delta+1}(m_{\delta+1}) +1 < e_{\delta}(m_{\delta})$.

Then, either $m_{\delta+1} = m_{\delta} - 1$ or $m_{\delta+1} = m_{\delta}$. We consider the two options separately.

\vspace{10pt}
\noindent({\bf{i}}) 
If $m_{\delta+1} = m_{\delta} - 1$, then for any $\sigma\in\Sym_\mu$, we can take
$$i = \sigma_\delta^{-1}(\sigma_{\delta+1}(m_{\delta+1})+1)\ .$$ Then,
$
\sigma_\delta(i) = \sigma_{\delta+1}(m_{\delta+1})+1$
and  
$e_\delta(i) \ge e_\delta(m_\delta) > e_{\delta+1}(m_{\delta+1})+1$,
so $\sigma$ is not admissible. Thus $\SSS(f)=\emptyset$.

\vspace{10pt}
\noindent({\bf{ii}})
If $m_{\delta+1} = m_{\delta}$, then we have 
 $e_{\delta+1}(m_{\delta}) +1 < e_{\delta}(m_{\delta}) \le e_\delta(j)$
 for $0\le j \le m_\delta$, so $R_\delta$ is a multiple of
\begin{equation}\label{jj45}
\prod_{j=0}^{m_\delta}(\sigma_\delta(j)-\sigma_{\delta+1}(m_\delta)-1).
\end{equation}
The product \eqref{jj45} vanishes unless $\sigma_{\delta+1}(m_\delta)=m_\delta$.
Hence
\begin{equation*}
(-m_\delta-1)\prod_{j=1}^{m_\delta}(j-\sigma_{\delta+1}(m_\delta)-1)
\end{equation*}
equals \eqref{jj45} 
for all $\sigma\in\Sym_\mu$ and is of lower degree, so we may replace $\psi$ with $\psi'$ of lower degree.

\pagebreak
\noindent {\bf Case IV.}
Suppose $\delta < 0$ and $e_{\delta}(m_\delta) < e_{\delta+1}(m_{\delta+1})$.

The situation is parallel to Case III. As before, either $\SSS(f)=\emptyset$ or we can replace a divisor of $R_\delta$ with a polynomial of lower degree.

\vspace{10pt}
To complete the proof of Proposition \ref{vanishing}, we
must check there are only 
finitely many $f$ 
which avoid Cases I-IV.
If $f$ does not fall into Case I, then $e_\delta(j)\le J$ for all $(\delta; j)\in\mu$. If $f$ does not fall into Case II, then $e_0(j)\ge -j-1$ for each $j$, and in particular $e_0(m_0) \ge -m_0-1$. If $f$ also does not fall 
into either of the other two cases, 
we can extend the inequality to obtain 
$$e_\delta(m_\delta) \ge -m_0 - 1 - \max\{\delta \mid \mu_\delta\ne\emptyset\}$$
for all $\delta$.
Since $e_\delta(j)$ is a weakly decreasing function of $j$, the
 bounds imply bounds for all of the $e_\delta(j)$. Since the $e_\delta(j)$ belong to $\frac{1}{a}\Z$, we conclude
 there are only a finite number of possibilities for each if $f$ does not fall into any of the Cases I-IV. \qed

\subsection{Proof of Proposition~\ref{division}}
Let $R=\Q[x_1,\ldots,x_n]$, and let 
$$e_1, e_2,\ldots,e_n\in R$$ be the elementary 
symmetric polynomials with $c_1, c_2, \ldots, c_n \in \Z$ their evaluations at $x_i=i$. 
Let $$I = (e_1-c_1,\ldots,e_n-c_n) \subset R$$ denote the ideal of polynomials 
vanishing on every permutation of $(1,\ldots,n)$. 
For a polynomial $f\in R$, let $f_0$ denote the homogeneous part
of $f$ of highest degree.
For  an ideal $J\subset R$, let $J_0$ denote the homogeneous 
ideal generated by the top-degree parts,
$$J_0 = \langle \ f_0  \ | \ f \in J \ \rangle \ .$$
Using the regularity of $e_1,\ldots, e_n$,
we easily see $I_0 = (e_1,\ldots,e_n)$.

We define $R'=\Q[y_1,\ldots,y_m]$ and ideals $I',I'_0\subset R'$
as above with respect to the permutations of $(1,\ldots,m)$.
We have
 $$A = R\otimes_\Q R'= \Q[x_1,\ldots, x_n,y_1,\ldots,y_m]\ .$$
For notational convenience, we let
$$I,I_0,I',I'_0 \subset A$$
denote the extensions of the respective ideals of $R$ and $R'$ in $A$. 
The ideal of $A$ vanishing on the set $P\subset \Q^{n+m}$ 
of Proposition \ref{division} is precisely $I+I'$.
The basic equality
$$(I+I')_0 = I_0 + I_0'$$
holds.

Let $\widehat{P} = \{p\in P \mid F(p)\ne 0\}$. Let $H\in A$ be a polynomial with 
the prescribed values 
$$H(p) = \frac{G(p)}{F(p)}$$ for  
$p\in \widehat{P}$, of minimum possible degree $d = \deg(H)$. 
We must show $d\le \deg(G)-\deg(F)$.
For contradiction, assume  $d > \deg(G)-\deg(F)$. 
Then, the polynomial $G - FH$ vanishes at every $p\in P$ and has top degree part $F_0H_0$.

Since $F_0\in R$, we verify the following equality
\begin{equation*}
H_0\in\{f\in A \mid F_0f \in (I+I')_0\} = \{r\in R \mid F_0r \in I_0\} + I'_0 \ \ \subset A.
\end{equation*}
We claim the above ideal is equal to 
\begin{equation*}
\{f\in A \mid Ff \in I+I'\}_0 = \{r\in R \mid Fr \in I\}_0 + I'_0 \ \ \subset A,
\end{equation*}
Assuming the equality, 
there exists $H'\in A$ with top degree part $H_0$ and $FH' \in I+I'$ vanishing at 
every $p\in P$. 
But then $H_0-H'$ has degree less than that of $H_0$ and still interpolates the desired values, so we have a contradiction.

To complete the proof of Proposition \ref{division},
we must show 
\begin{equation*}
\{r\in R \mid F_0r \in I_0\} + I'_0 = \{r\in R \mid Fr \in I\}_0 + I'_0,
\end{equation*}
or equivalently 
\begin{equation} \label{befff}
\{r\in R \mid F_0r \in I_0\} = \{r\in R \mid Fr \in I\}_0.
\end{equation}
The left hand side contains the right hand side.
The equality \eqref{befff} is thus a 
 consequence of the following Lemma which implies 
the two sides have equal (and finite) codimension in $R$.

\begin{lem}\label{ranks}
Let $n\geq 0$ be an integer, and 
let $a_1\le a_2\le\cdots\le a_n$ be integers satisfying $0\le a_i < i$.  Let
\begin{equation*}
F = \prod_{1\le j\le a_i}(x_j-x_i+1)  \ \ \ \ {\text and} \ \ \ \
F_0 = \prod_{1\le j\le a_i}(x_j-x_i).
\end{equation*}
Then, we have
\begin{eqnarray*}
\rk_\Q(m_F: R/I \to R/I) & =& \rk_\Q(m_{F_0}:R/I_0 \to R/I_0) \\
                &  = & \prod_{i=1}^n(i-a_i),
\end{eqnarray*}
where $m_F$ and $m_{F_0}$ denote multiplication operators by $F$ and $F_0$ respectively.
\end{lem}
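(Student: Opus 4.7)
My plan is to compute the two ranks independently and verify each equals $\prod_i (i - a_i)$.

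For the rank of $m_F$ on $R/I$, the point is that $I$ is the radical vanishing ideal of the $n!$ points obtained by permuting the coordinates of $(1,2,\ldots,n) \in \Q^n$, so $R/I \cong \Q^{n!}$ via evaluation and $m_F$ becomes a diagonal operator whose $p$-th entry is $F(p)$. Thus $\rk(m_F) = |\{p \in S_n : F(p) \ne 0\}|$. Introducing the inverse permutation $q$, where $q_v$ is the position of the value $v$ in $p$, the condition $F(p)\neq 0$ becomes the chain of inequalities $q_v > a_{q_{v+1}}$ for $v = 1, \ldots, n-1$. I would count such chains by induction on $n$, focusing on the placement of the largest value and exploiting the weak monotonicity of $(a_i)$ to factor the recursion as $(n - a_n)$ times the corresponding count for the reduced data.

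For the rank of $m_{F_0}$ on $R/I_0$, I would pass to the graded flat deformation over $\Q[t]$ given by
\[
\tilde I = (e_1 - t c_1, \, e_2 - t^2 c_2, \, \ldots, \, e_n - t^n c_n) \subset R[t], \qquad \tilde F = \prod_i \prod_{j \le a_i}(x_j - x_i + t),
\]
homogeneous with $\deg x_i = \deg t = 1$. Since the generators of $\tilde I$ form a regular sequence in $R[t]$, the quotient $R[t]/\tilde I$ is a free graded $\Q[t]$-module of rank $n!$. The specializations at $t = 0$ and $t = 1$ yield $(R/I_0, F_0)$ and $(R/I, F)$ respectively; for any other $t_0 \ne 0$, the rescaling $x_i \mapsto x_i/t_0$ identifies the fiber with $(R/I, F)$ up to the nonzero scalar $t_0^{\sum a_i}$. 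Consequently the generic $\Q(t)$-rank of $m_{\tilde F}$ equals the first rank $\prod_i(i - a_i)$.

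Since $\Q[t]$ is a PID and $R[t]/\tilde I$ is free, the image $\tilde F \cdot (R[t]/\tilde I)$ is itself free of rank $\prod_i(i - a_i)$, and the cokernel decomposes as a free part plus torsion summands $\bigoplus \Q[t]/(t^{k_i})$; the rank of $m_{F_0}$ drops below $\rk(m_F)$ by exactly the number of torsion summands. The main obstacle is therefore to prove that this cokernel is $\Q[t]$-torsion-free. I would exploit the homogeneity: each entry of the matrix of $m_{\tilde F}$ in a graded basis of $R[t]/\tilde I$ is a homogeneous polynomial in the single variable $t$, hence a monomial $c \cdot t^k$ with $c \in \Q$. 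Combined with the explicit realization of $F_0$ as a product of the linear roots $(x_j - x_i)$ and the Artin monomial basis of $R/I_0$, the required torsion-freeness reduces to a combinatorial check that, in every graded degree, the degree-preserving block of the matrix already realizes the full contribution to the generic rank; this will be the technically delicate step.
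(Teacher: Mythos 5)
Your first computation, $\rk(m_F)=\prod_i(i-a_i)$, is sound and is essentially the paper's argument: $R/I$ is the ring of functions on the $n!$ permutation points, $m_F$ is diagonal in the evaluation basis, and the nonvanishing condition translates (via the inverse permutation, as you note) into a chain condition counted one factor at a time; the paper organizes the same count as building a directed path by choosing the edge out of $n$, then $n-1$, and so on, producing $(n-a_n)\cdots(1-a_1)$.

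The second half has a genuine gap. Your graded family $(\tilde I,\tilde F)$ over $\Q[t]$ is correctly set up, and it does give one inequality for free: since $R[t]/\tilde I$ is $\Q[t]$-free of rank $n!$ and every fiber at $t_0\ne 0$ is isomorphic after rescaling to $(R/I,F)$, semicontinuity of rank yields $\rk(m_{F_0})\le \prod_i(i-a_i)$. But the substance of the lemma is the reverse inequality, which in your formulation is precisely the torsion-freeness of $\coker(m_{\tilde F})$ over $\Q[t]$ --- and this you do not prove. The homogeneity observation (matrix entries are monomials $c\,t^k$) is true but by itself implies nothing: a graded map over $\Q[t]$ can perfectly well drop rank at $t=0$ (multiplication by $t$ itself is the simplest example), so the ``combinatorial check'' you defer is not a check but the theorem restated. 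The paper closes exactly this gap by a different and substantial argument: it identifies $J=\{r\in R\mid F_0r\in I_0\}$ explicitly as the complete intersection $(f_1,\dots,f_n)$ with $f_k=\sum_{i=k}^n x_i\prod_{j=a_k+1}^{k-1}(x_j-x_i)$ of multidegree $(1-a_1,\dots,n-a_n)$, proves $J=(f_1,\dots,f_n)$ by induction on the sequence $(a_i)$ (incrementing a single $a_l$ and tracking the relations among the images $\overline{f}_i$ in $R/(x_{a_l+1}-x_l)$), and then reads off $\rk_\Q(R/J)=\prod_i(i-a_i)$ from B\'ezout. Some replacement for this step --- an actual lower bound on $\rk(m_{F_0})$ --- is required before your route is a proof.
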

\begin{proof}
We first show  $\rk_\Q(m_F) = \prod_{i=1}^n(i-a_i)$. 
Since $R/I$ is the coordinate ring of the set of $n!$ permutations of $(1,\ldots,n)$, 
the rank is simply the number of permutations at which $F$ does not vanish. 
We must  count the number of permutations $\sigma\in \Sym_n$ 
satisfying $$\sigma(i)-1\ne\sigma(j)$$
 for $1\le j\le a_i$. 

We view the permutation $\sigma$  (extended by $\sigma(0)=0$)
 as a directed path on vertices labeled $0,1,\ldots,n$ with an edge from $i$ to $j$ 
if $\sigma(i)-1=\sigma(j)$. 
We are then counting permutations which
 do not have an edge from $i$ to $j$ if $1\le j\le a_i$.

We count the number of ways of building such a path by first choosing an edge leading out of $n$, then an edge leading out of $n-1$, and so on. The edge leading out of $n$ can go to $0$ or to any $j$ with $a_n < j < n$; there are $n-a_n$ choices. After placing the edges leading out of $n,n-1,\ldots,k+1$, the digraph will be a disjoint union of $k+1$ paths. One of these paths will end at $k$ and $a_k$ of the other paths will end at $1,\ldots,a_k$, so the choices for the edge leading out of $k$ are to go to the start of one of the $k-a_k$ other paths. Thus,
 the number of such permutations is indeed the product $(n-a_n)\cdots(1-a_1)$.

Proving $\rk_\Q(m_{F_0}) = \prod_{i=1}^n(i-a_i)$ will require more work. 
Let $$J = \{f\in R \mid F_0f\in I_0\},$$
 so multiplication by $F_0$ induces an isomorphism between $R/J$ and  
$\text{Image}(m_{F_0})\subset R/I_0$. 
We will show 
\begin{equation}\label{uu23}
\rk_\Q(R/J) = \prod_{i=1}^n(i-a_i)\ .
\end{equation}  
In fact, we claim  $R/J$ is a 0-dimensional complete intersection of 
multidegree $(1-a_1,\ldots,n-a_n)$. The
dimension \eqref{uu23}
will then follow from Bezout's Theorem. 
For $1\le k \le n$, let
\begin{equation*}
f_k = \sum_{i=k}^n x_i\prod_{j=a_k+1}^{k-1}(x_j-x_i).
\end{equation*}
We claim $J = (f_1,\ldots,f_n)$. Note $f_k$ has degree $k-a_k$ as desired.

We will prove this claim by induction on the sequence $(a_i)_{i=1}^n$. The base case is $a_i=0$ for all $i$
where
 $$F=1,\ \ \ J = I_0,\ \ \ \text{and}\ \ \ 
f_k = \sum_{i=k}^nx_i\prod_{j=1}^{k-1}(x_j-x_i)\ .$$
 We must show $(f_1,\ldots,f_n) = (e_1,\ldots,e_n)$.

First, suppose $f_1=f_2=\cdots=f_n=0$ 
at some point $$(t_1,\ldots,t_n)\in\overline{\Q}^n.$$
 From $f_n=0$, we find either $t_n=0$ or $t_n=t_i$ for some $i<n$. 
Since $f_{n-1}=0$, either $t_{n-1}=0$ or $t_{n-1}=t_i$ for some $i<n-1$. 
Continuing, we conclude for every $k$, either $t_k=0$ or $t_k=t_i$ for some $i<k$. 
Thus, $t_k=0$ for all $k$. 
Therefore $R/(f_1,\ldots,f_n)$ is a complete intersection and has $\Q$-rank 
$$(\deg f_1)\cdots(\deg f_n) = n! = \rk_\Q(R/(e_1,\ldots,e_n))\ .$$

By the rank computation, we
need only show  
\begin{equation}\label{httyy}
(f_1,\ldots,f_n) \subseteq (e_1,\ldots,e_n)
\end{equation}
to complete the base case of the induction. 
But the inclusion \eqref{httyy} is easily seen. For every $k$, we have
\begin{align*}
f_k &= \sum_{i=1}^n x_i\prod_{j=1}^{k-1}(x_j-x_i) \\
&= \sum_{i=1}^n \sum_{e=1}^n c_ex_i^e \\
&=\sum_{e=1}^n c_e \left(\sum_{i=1}^n x_i^e\right),
\end{align*}
where $c_e\in R$. The power sum
 $\sum_{i=1}^n x_i^e$ is symmetric and can be written as a polynomial in the elementary symmetric functions $e_1,\ldots,e_n$.  
The base case is now established.

We now consider two sets of indices $a_1,\ldots, a_n$ and $a'_1,\ldots,a'_n$
for which such that $a'_i=a_i$ except when $i=l$ and 
\begin{equation}
a'_l=a_l+1.
\label{grtt}
\end{equation}
 We moreover require either $l = n$ or $a_{l+1}=a_l+1$. We assume 
inductively our claim holds for $a_1,\ldots, a_n$ and show 
the claim for $a'_1,\ldots,a'_n$. 
Every $(a'_i)_{1\le i \le n}$ which is not identically zero can be reached by 
taking $l = \min\{l \mid a'_l=a'_n\}$, so the inductive step will imply the Lemma. 
Let $J$,$J'$ be the corresponding ideals and let $f_1,\ldots,f_n$ and $f'_1\ldots,f'_n$ be the claimed generators.
 We are assuming  $J = (f_1,\ldots,f_n)$ and want to prove  $J' = (f'_1\ldots,f'_n)$.

 From the definition of $J$ and $J'$, we easily see 
 $$J' = \{g\in R \mid (x_{a_l+1}-x_l)g\in J\}.$$
 Also note $f'_k = f_k$ for $k\ne l$.
If $l=n$, then
 $$f'_l = \frac{f_l}{x_{a_l+1}-x_l}$$ 
and otherwise
$$f'_l = \frac{f_l-f_{l+1}}{x_{a_l+1}-x_l}$$
by condition \eqref{grtt}.

Let $\overline{R} = R/(x_{a_l+1}-x_l)$. For an element $r\in R$, 
let $\overline{r}$ denote the projection in $\overline{R}$. Consider 
the $\overline{R}$-module homomorphism 
$$\psi: \overline{R}^n\to \overline{R}$$
 defined by $\psi(\overline{r}_1,\ldots,\overline{r}_n) = \overline{f}_1\overline{r}_1+\cdots+\overline{f}_n\overline{r}_n$. Let $s_i^{(j)}$ for $1\le i\le n$ and $1\le j\le m$ be such that the $m$ elements $(\overline{s}_1^{(j)},\ldots, \overline{s}_n^{(j)})\in\overline{R}^n$ generate the kernel of $\psi$. Clearly,
 $J'$ is the ideal generated by $J$ and the $m$ elements 
$$\frac{1}{x_{a_l+1}-x_l}\sum_{i=1}^nf_is^{(j)}_i \ .$$

In other words, we must find all the relations between the elements 
$$\overline{f}_1,\ldots,\overline{f}_n.$$
 Now $\overline{f}_l = \overline{f}_{l+1}$ if $l\ne n$, or $\overline{f}_l=0$ if $l = n$, so we need only consider relations between the $n-1$ elements with $\overline{f}_l$ removed. 
These $n-1$ elements in $\overline{R}$ form a complete intersection, 
so the relations are generated by the trivial ones 
$\overline{f}_i\overline{f}_j-\overline{f}_j\overline{f}_i = 0$.

We have proven that
 $J'$ is the ideal generated by $J = (f_1,\ldots,f_n)$, either $\frac{f_l}{x_{a_l+1}-x_l}$ if $l=n$ or $\frac{f_l-f_{l+1}}{x_{a_l+1}-x_l}$ otherwise, and the elements $$\frac{f_if_j-f_jf_i}{x_{a_l+1}-x_l} = 0.$$
 Thus $J' = (f'_1\ldots,f'_n)$, as desired.
\end{proof}

\section{Descendent depth}\label{depp}

\subsection{$T$-Depth} 
Let $N$ be a split rank 2 bundle on 
a nonsingular projective curve $C$ of genus $g$.
Let $S\subset N$ be the relative divisor associated
to the points $p_1,\ldots, p_r\in C$.
We consider the $T$-equivariant stable pairs theory of $N/S$
with respect to the scaling action.

The $T$-{\em depth} $m$ theory of $N/S$ consists of all
$T$-equivariant series
\begin{equation}
\label{hkkq}
{\mathsf Z}^{N/S}_{d,\eta^1,\dots,\eta^r} 
\left( \prod_{{j'}=1}^{k'}
\tau_{i'_{j'}}(\mathsf{1})  \  \prod_{j=1}^k \tau_{i_j}(\mathsf{p})   
\right)^T
\end{equation}
where $k' \leq m$.
As before, $\mathsf{p}\in H^2(C,\mathbb{Z})$ is the class of a point.
The $T$-depth $m$ theory has at most $m$ descendents
of $1$ and arbitrarily many descendents of $\mathsf{p}$ in the integrand.
The $T$-depth $m$ theory of $N/S$ is {\em rational} if all
$T$-depth $m$ series \eqref{hkkq} are Laurent expansions in $q$
of rational functions in $\Q(q,s_1,s_2)$.

The $T$-depth 0 theory concerns only descendents of $\mathsf{p}$.
By taking the specialization $s_3=0$ of Proposition \ref{cttt}, 
$$
\ZZ_{d,\eta}
^{\mathsf{cap}}\left(   \prod_{j=1}^k \tau_{i_j}(\mathsf{p})
\right)^T=
\ZZ_{d,\eta}
^{\mathsf{cap}}\left(   \prod_{j=1}^k \tau_{i_j}([0])
\right)^{\mathbf{T}}\Big|_{s_3=0}\ , $$
we see
the depth 0 theory of the cap is rational.  

\begin{lem} The $T$-depth 0 theory of $N/S$ over a curve $C$ is rational.
\label{ht99}
\end{lem}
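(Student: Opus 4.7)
The plan is to reduce the $T$-depth $0$ claim for arbitrary $N/S$ to the capped case, which is already in hand: as noted just before the statement, specializing $s_3 = 0$ sends the equivariant class $[0] \in H^*_{\mathbf{T}}(\PP^1)$ to the ordinary point class $\mathsf{p}$, and hence Proposition \ref{cttt} gives
\[
\ZZ^{\mathsf{cap}}_{d, \nu}\!\left(\tau_i(\mathsf{p})\right)^{\!T} \;=\; \ZZ^{\mathsf{cap}}_{d, \nu}\!\left(\tau_i([0])\right)^{\mathbf{T}}\Big|_{s_3 = 0} \;\in\; \Q(q, s_1, s_2).
\]

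Next I would represent each descendent factor $\tau_{i_j}(\mathsf{p})$ by the point class at a chosen distinct $q_j \in C \setminus \{p_1, \dots, p_r\}$, and apply the $T$-equivariant relative degeneration formula iteratively at $q_1, \dots, q_k$: each step bubbles off a rational tail $\PP^1_j$ at $q_j$. Because the normal direction to each bubbled point is trivial inside the total space, the rank $2$ bundle restricts to $\cO_{\PP^1_j} \oplus \cO_{\PP^1_j}$ on every tail, so each tail is a level $(0,0)$ cap relative to the fiber over the node, and the descendent insertion is carried to the non-relative $T$-fixed point of the cap.

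The iterated degeneration expresses the series as a finite sum
\[
\ZZ^{N/S}_{d, \vec\eta}\!\left(\prod_{j=1}^k \tau_{i_j}(\mathsf{p})\right)^{\!T}
\;=\; \sum_{\mu^1, \dots, \mu^k \vdash d}
\ZZ^{N/(S \cup S')}_{d, \vec\eta, \mu^1, \dots, \mu^k}\!(1)^T
\,\prod_{j=1}^k g^{\mu^j \mu^j}\, \ZZ^{\mathsf{cap}}_{d, \mu^j}\!\left(\tau_{i_j}(\mathsf{p})\right)^{\!T},
\]
where $S' = \bigcup_j N_{q_j}$ is the new relative divisor and $g^{\mu\mu} \in \Q(s_1, s_2)$ is the inverse Nakajima pairing. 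The middle factor involves no descendents and is rational by the primary-field rationality for relative local curves, which is assembled from the toric rationality results of \cite{moop, mpt} through the TQFT of \cite{BryanP, lcdt}. Each capped factor is rational by the first step. Since the outer sum is finite and the pairing factors lie in $\Q(s_1, s_2)$, the full series lies in $\Q(q, s_1, s_2)$.

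The only nontrivial point is the compatibility of the point descendent with the degeneration formula; but $\mathsf{p}$ is represented by a single point whose image in the degeneration lies on the bubble, so the insertion transfers unambiguously onto the cap factor. No genuinely new input beyond Proposition \ref{cttt} and the descendent-free local curve rationality is needed.
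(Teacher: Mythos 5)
Your proposal is correct and follows essentially the same route as the paper: specialize Proposition \ref{cttt} at $s_3=0$ to get rationality of the $T$-depth $0$ theory of the $(0,0)$-cap, degenerate each $\tau_{i_j}(\mathsf{p})$ onto a $(0,0)$-cap via the degeneration formula, and invoke the rationality of the descendent-free relative local curve theory from \cite{mpt,lcdt} for the remaining factor. Your version merely spells out the iterated bubbling and the Nakajima-basis gluing sum (up to the $q^{-d}$ normalization in the gluing term, which is harmless for rationality) that the paper leaves implicit.
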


\begin{proof} By the degeneration formula, all the descendents
$\tau_{i_j}(\mathsf{p})$ can be degenerated on to a $(0,0)$-cap. 
The $T$-depth 0 theory of the cap is rational.
The pairs
theory of local curves without any insertions is rational by \cite{mpt,lcdt}.
Hence, the result follows by the degeneration formula. 
\end{proof}

\subsection{Degeneration}
We have already used the degeneration formula in simple cases 
in Proposition \ref{ctttt}
and Lemma  \ref{ht99} above. We review here the
full $T$-equivariant formula for descendents of 
$\mathsf{1},\mathsf{p}\in H^*(C,\mathbb{Z})$.

Let $C$ degenerate to a union $C_1\cup C_2$ of nonsingular
projective curves $C_i$
meeting at a node $p'$. 
Let $N$ degenerate to split bundles 
$$N_1 \rightarrow C_1, \ \ \ \ N_2 \rightarrow C_2 \ .$$
The levels of $N_i$ must sum to the level of $N$.
The relative points $p_i$, distributed to nonsingular points
of $C_1\cup C_2$, specify relative points $S_i\subset C_i$ away from
$p'$. Let $S_i^+= S_i \cup \{ p' \}$.

In order to apply the degeneration formula to the series
\eqref{hkkq}, we must also specify the distribution of
the point classes occuring in the descendents $\tau_{i_j}(\mathsf{p})$.
The disjoint union $$J_1\cup J_2 = \{1,\ldots, k\}$$
specifies the descendents $\tau_{i_j}(\mathsf{p})$
distribute to $C_i$ for $j\in J_i$.
The degeneration formula for \eqref{hkkq} is
\begin{multline*}
\sum_{J'_1\cup J'_2=\{1,\ldots, k'\}}
{\mathsf Z}^{N_1/S^+_1}_{d,\eta^1,\dots,\eta^{|S_1|}, \mu} 
\left( \prod_{{j'}\in J'_1}
\tau_{i'_{j'}}(\mathsf{1})  \  \prod_{j\in J_1} \tau_{i_j}(\mathsf{p})   
\right)^T\ \frac{g^{\mu\widehat{\mu}}}{q^d} \\ \cdot
{\mathsf Z}^{N_2/S^+_2}_{d,\eta^{|S_1|+1},\dots,\eta^{|S_2|}, \widehat{\mu}} 
\left( \prod_{{j'}\in J'_2}
\tau_{i'_{j'}}(\mathsf{1})  \  \prod_{j\in J_2} \tau_{i_j}(\mathsf{p})   
\right)^T
\end{multline*}

A crucial point in the derivation of the degeneration formula
is the pre-deformability condition (ii) of Section 3.7 of \cite{pt}.
The condition insures the existence of finite resolutions of
the universal sheaf $\mathbb{F}$ in the relative geometry (needed
for the definition of the descendents) and guarantees the
splitting of the descendents under pull-back via the gluing 
maps of the relative geometry. The foundational treatment for
stable pairs is essentially the same as for ideal sheaves \cite{liwu}.

\subsection{Induction I}

To obtain the rationality of the $T$-depth $m$ theory of
$N/S$ over a curve $C$, further knowledge of the descendent
theory of twisted caps is required.

\begin{lem} The rationality of the
 $T$-depth $m$ theories of all twisted caps implies
the rationality of the 
$T$-depth $m$ theory
of $N/S$ over a curve $C$. \label{rtt5}
\end{lem}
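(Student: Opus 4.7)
The plan is to prove the lemma by induction on $k'$, the number of identity-class descendents appearing in the depth $m$ series, with the base case $k'=0$ provided directly by Lemma \ref{ht99}. The tool is the $T$-equivariant degeneration formula stated in the preceding subsection, applied at a smooth point of $C$ in order to strip one descendent insertion onto a twisted cap.

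For the inductive step, fix a smooth point $q\in C\setminus S$ and consider the degeneration of $C$ at $q$ in which the bubble $\PP^1$ carries the trivial level $(0,0)$ splitting; this gives a $(0,0)$-cap relative to the node $q$. The $T$-equivariant degeneration formula then expresses the given partition function as a finite sum
\[
\mathsf{Z}^{N/S}(\cdots) \;=\; \sum_{J'_2\subseteq\{1,\dots,k'\},\;\mu\vdash d}\;\mathsf{Z}^{N/S\cup q}_{\ldots,\mu}(\text{ins off cap})\cdot\frac{g^{\mu\hat\mu}}{q^d}\cdot\mathsf{Z}^{(0,0)\text{-cap}}_{\hat\mu}(\text{ins on cap}),
\]
where the sum over $\mu$ is finite because $|\mu|=d$ is fixed, and the distribution of point-class descendents may be chosen by geometric placement.

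Every contribution with $J'_2\ne\emptyset$ is already rational: the cap factor is a twisted cap partition function of depth $|J'_2|\in[1,m]$ (together with arbitrarily many $\mathsf{p}$-descendents), rational by the hypothesis on twisted caps; the trunk factor has depth $k'-|J'_2|<k'$ on the modified geometry $N/S\cup q$, and is rational by the inductive hypothesis. For the residual contribution $J'_2=\emptyset$, we redistribute one of the $\mathsf{p}$-descendents onto the bubble: the bubble then becomes a depth-$0$ twisted cap carrying a single $\mathsf{p}$-descendent (rational by Lemma \ref{ht99}) while the trunk loses one point-class descendent. Running an outer induction on the total descendent number $k+k'$ then closes the argument whenever $k\ge 1$.

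The main obstacle is the edge case $k=0$, $k'\ge 1$, where there is no $\mathsf{p}$-descendent available to redistribute onto the bubble and a naive bubble-off does not decrease the induction parameter on either side. Here one uses an alternative ingredient: either a non-separating degeneration of $C$ when $g\ge 1$ (reducing the genus at the cost of adding two new relative divisors but preserving the depth), or a separating degeneration into $\PP^1$ pieces when $g=0$, combined with the rationality of the insertion-free theory from \cite{mpt,lcdt} and the algebraic invertibility of the empty $(0,0)$-cap operator that appears in the $J'_2=\emptyset$ contribution. A careful orchestration of these reductions, together with the finiteness of all sums over gluing partitions, yields the desired rationality of the full $T$-depth $m$ series on $N/S$.
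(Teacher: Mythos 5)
Your induction has a genuine gap at the $J'_2=\emptyset$ term of the degeneration formula, and the attempted repairs do not close it. When you bubble off a $(0,0)$-cap at $q$, the trunk factor is a series on $N/(S\cup q)$ --- a geometry with one \emph{more} relative point --- and in the $J'_2=\emptyset$ term it still carries all $k'$ identity descendents, so it is strictly harder than the series you started with: neither the depth nor any other induction parameter has decreased. Redistributing a $\mathsf{p}$-descendent onto the bubble only trades one $\mathsf{p}$-insertion for one extra relative point on the trunk while leaving the depth fixed, so iterating terminates in the case $k=0$ with $r+k$ relative points and full depth $k'$, which is exactly as hard as the original problem; the non-separating and separating degenerations you invoke also add relative points without reducing depth. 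The structural problem is that you are reading the degeneration formula in the wrong direction: it expresses the series you want in terms of series with \emph{more} relative insertions, and no amount of re-bubbling reverses that.

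The missing idea is the descendent/relative correspondence that the paper uses to run the argument backwards. One considers the $\infty\times p(d)$ matrix $M_d$ with entries $\mathsf{Z}^{\mathsf{cap}}_{d,\mu}(L)^T$, where $L$ ranges over monomials in the stationary descendents $\tau_i(\mathsf{p})$ and $\mu$ over partitions of $d$; its leading $q^d$ coefficients come from Chern characters of the tautological bundle on $\mathrm{Hilb}(N_\infty,d)$, which generate the localized $T$-equivariant cohomology, so $M_d$ has maximal rank. Degenerating the $r$-point $(0,0)$ geometry with the extra insertions $L$ placed on the bubble, the $J'_2=\emptyset$ contribution is precisely $M_d$ (composed with the invertible gluing pairing) applied to the vector of unknown $(r+1)$-point series indexed by $\mu$, while the left side and the $J'_2\neq\emptyset$ terms are rational by induction on $r$ and on the depth $m$. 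Inverting $M_d$ then \emph{solves for} the $(r+1)$-point series in terms of $r$-point series with extra $\mathsf{p}$-descendents. This is the step your proposal lacks: a single gluing sum $\sum_\mu \mathsf{Z}^{\mathrm{trunk}}_\mu g^{\mu\widehat{\mu}}\mathsf{Z}^{\mathsf{cap}}_{\widehat{\mu}}$ is one equation in $p(d)$ unknowns and is not invertible by itself; you must vary $L$ to obtain a full-rank system. With that ingredient the induction runs on the number of relative points (base case $r=1$ being the cap, supplied by the hypothesis), then passes to higher genus by degeneration to rational components, and finally to level $(a_1,a_2)$ by splitting off a twisted cap.
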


\begin{proof}
We start by proving rationality for the $T$-depth $m$ theories of
all $(0,0)$ geometries,
\begin{equation} \label{hxxz}
\cO_{\C} \oplus \cO_{\C} \rightarrow \Pp\ ,
\end{equation}
relative to $p_1,\ldots, p_r \in \Pp$.
If $r=1$, the geometry is the cap and rationality of the
$T$-depth $m$ theory is given.
Assume rationality holds for $r$. We will show rationality holds for
$r+1$.

Let $p(d)$ be the number of partitions of size $d>0$.
Consider the $\infty \times p(d)$ matrix $M_d$, indexed by 
monomials 
$$L= \prod_{i\geq 0} \tau_i (\mathsf{p})^{n_i} $$
in the descendents of $\mathsf{p}$ and partitions $\mu$ of $d$,
 with
coefficient 
$
{\mathsf Z}^{\mathsf{cap}}_{d,\mu} 
\left( L   
\right)^T$
in position $(L,\mu)$.
The lowest Euler characteristic for a degree $d$
stable pair on the cap is $d$.  
The leading $q^d$
coefficients of $M_d$ are well-known to be of maximal
rank.{\footnote{ The leading $q^d$ coefficients
are obtained from the Chern characters
of the tautological rank $d$ bundle
on $\text{Hilb}(N_\infty,d)$.
The Chern characters generate the ring
$H^*_T(\text{Hilb}(N_\infty,d),\mathbb{Q})$ after
localization as can easily
be seen in the $T$-fixed point basis. 
A more refined result is
discussed in Section \ref{ennd}.}}
Hence, the full matrix $M_d$ is also of maximal rank.

Consider the level $(0,0)$ geometry 
over $\Pp$ relative to $r+1$ points in $T$-depth $m$,
\begin{equation}
\label{yone}
{\mathsf Z}^{(0,0)}_{d,\eta^1, \ldots, \eta^r,\mu} 
\left( \prod_{j'=1}^{k'}
\tau_{i'_{j'}}(1)  \  \prod_{j=1}^k \tau_{i_j}(\mathsf{p})   
\right)^T\ .
\end{equation}
We will determine the series \eqref{yone} from the
$T$-depth $m$ series relative to  $r$ points,
\begin{equation}
\label{yall}
{\mathsf Z}^{(0,0)}_{d,\eta^1, \ldots, \eta^r}
\left( L \ \prod_{j'=1}^{k'}
\tau_{i'_{j'}}(1)  \  \prod_{j=1}^k \tau_{i_j}(\mathsf{p})   
\right)^T
\end{equation}
defined by all monomials $L$  in the descendents of $\mathsf{p}$.

Consider the $T$-equivariant degeneration of 
 the $(0,0)$ geometry relative to $r$ points obtained
by bubbling off a single $(0,0)$-cap.
All the descendents of $\mathsf{p}$
remain on the original $(0,0)$ geometry in the degeneration except for those 
in $L$ which distribute to the cap. 
By induction on $m$, we need only analyze the terms of the degeneration formula
in which the descendents of the identity distribute away from the cap.
Then,
since $M_d$ has full rank,
the invariants \eqref{yone} are determined
by the invariants \eqref{yall}.

We have proven the rationality of the $T$-depth $m$ theory of the $(0,0)$-cap
implies the rationality of the $T$-depth $m$ theories of all
$(0,0)$ relative geometries over $\Pp$. By degenerations of
higher genus curves $C$ to rational curves with relative points,
the  rationality of the $(0,0)$ relative geometries over
curves $C$ of arbitrary genus is established.

Finally, consider a relative geometry $N/S$ over $C$ of
level $(a_1,a_2)$. We can degenerate $N/S$ to the
union of a $(0,0)$ relative geometry over $C$ and 
 a twisted $(a_1,a_2)$-cap. Since the rationality of the
$T$-depth $m$ theory of
the twisted cap is given, we conclude the rationality of
$N/S$ over $C$.
\end{proof}

The proof of Lemma \ref{rtt5} yields a slightly refined
result which will be  half of our induction argument
relating the descendent theory of the $(0,0)$-cap and
the $(0,0)$-tube.

\begin{lem}\label{nndd}
The rationality of the $T$-depth $m$ theory of the 
$(0,0)$-cap implies the rationality of the $T$-depth $m$ theory of
the $(0,0)$-tube. 
\end{lem}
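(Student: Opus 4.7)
The plan is to essentially specialize the argument used in the proof of Lemma~\ref{rtt5} (going from $r$ to $r+1$ relative points on a $(0,0)$-geometry over $\Pp$) to the case $r=1 \to r=2$, namely cap $\to$ tube. I would induct on the depth $m$. The base case $m=0$ is handled directly by Lemma~\ref{ht99}, which establishes rationality of the $T$-depth $0$ theory of every relative local curve, and the $(0,0)$-tube is a special instance.

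For the inductive step, assume the $T$-depth $m-1$ theory of the $(0,0)$-tube is rational, and take the $T$-depth $m$ theory of the $(0,0)$-cap as given by hypothesis. For an arbitrary monomial $L = \prod_{i\ge 0}\tau_i(\mathsf{p})^{n_i}$ in point descendents and an arbitrary relative condition $\eta$, consider the rational cap series
\begin{equation*}
\mathsf{Z}^{\mathsf{cap}}_{d,\eta}\!\left(L \cdot \prod_{j'=1}^{m}\tau_{i'_{j'}}(\mathsf{1}) \cdot \prod_{j=1}^{k}\tau_{i_j}(\mathsf{p})\right)^T.
\end{equation*}
Now $T$-equivariantly degenerate the cap into a $(0,0)$-tube joined to a $(0,0)$-cap at a new relative divisor, sending $L$ to the new cap and targeting the remaining descendents to the tube. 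By the degeneration formula, the series above equals a sum over splittings of the descendents between the two components, paired over relative partitions $\mu$ with inverse intersection $g^{\mu\widehat{\mu}}/q^d$.

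The key observation is that any term in this degeneration sum in which some $\tau_{i'_{j'}}(\mathsf{1})$ lands on the cap side has tube factor of depth at most $m-1$, hence rational by the inductive hypothesis, while the cap factor only involves descendents of $\mathsf{p}$ (together with some $\tau(\mathsf{1})$'s of depth $\leq m$) and is rational by the cap hypothesis. Isolating the unique distribution in which all $\tau_{i'_{j'}}(\mathsf{1})$'s and all the $\tau_{i_j}(\mathsf{p})$'s land on the tube side, and absorbing every other distribution into a rational remainder, we obtain, for each $L$, an identity of the shape
\begin{equation*}
\sum_{|\mu|=d}\mathsf{Z}^{\mathsf{cap}}_{d,\mu}(L)^T\cdot\frac{g^{\mu\widehat{\mu}}}{q^d}\cdot\mathsf{Z}^{\mathsf{tube}}_{d,\widehat{\mu},\eta}\!\left(\prod_{j'}\tau_{i'_{j'}}(\mathsf{1})\prod_j\tau_{i_j}(\mathsf{p})\right)^T = \text{rational in }\Q(q,s_1,s_2).
\end{equation*}

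Finally, letting $L$ range over all monomials in descendents of $\mathsf{p}$ yields a linear system, with coefficient matrix exactly the matrix $M_d$ from the proof of Lemma~\ref{rtt5}. That matrix was shown there (via its leading $q^d$ coefficients coming from the tautological bundle on $\mathrm{Hilb}(N_\infty,d)$) to have maximal rank $p(d)$. Inverting this system expresses each tube series $\mathsf{Z}^{\mathsf{tube}}_{d,\widehat{\mu},\eta}(\prod\tau(\mathsf{1})\prod\tau(\mathsf{p}))^T$ as a $\Q(q,s_1,s_2)$-linear combination of rational functions, giving the desired rationality. The only nontrivial ingredient is the maximal rank of $M_d$, but that is already established in the proof of Lemma~\ref{rtt5}, so no new obstacle arises here.
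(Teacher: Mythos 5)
Your proposal is correct and follows essentially the same route as the paper: the paper obtains Lemma \ref{nndd} precisely as the $r=1\to r=2$ specialization of the proof of Lemma \ref{rtt5} (bubbling off a $(0,0)$-cap carrying the auxiliary point-descendent monomials $L$, handling the terms where identity descendents land on the cap by induction on the depth $m$, and inverting the full-rank matrix $M_d$ to solve for the tube series). Your explicit bookkeeping of the induction on $m$ and of the isolated leading term matches what the paper leaves implicit, so no new ingredient is needed.
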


\subsection{$\mathbf{T}$-depth}

The $\mathbf{T}$-{\em depth} $m$ theory of the $(a_1,a_2)$-cap 
consists of all the
$\mathbf{T}$-equivariant series
\begin{equation}
\label{hkkqq}
{\mathsf Z}^{(a_1,a_2)}_{d,\eta} 
\left( 
\prod_{j=1}^k \tau_{i_j}([0])  \  \prod_{{j'}=1}^{k'}
\tau_{i'_{j'}}([\infty])  
\right)^{\mathbf{T}}
\end{equation}
where $k' \leq m$.
Here, $0\in \Pp$ is the non-relative $\mathbf{T}$-fixed point and
$\infty\in \Pp$ is the relative point.
The $\mathbf{T}$-depth $m$ theory of the $(a_1,a_2)$-cap 
is {\em rational} if all
$\mathbf{T}$-equivariant
depth $m$ series \eqref{hkkq} are Laurent expansions in $q$
of rational functions in $\Q(q,s_1,s_2,s_3)$.

\begin{lem} The rationality of the $\mathbf{T}$-depth $m$
theory of the $(a_1,a_2)$-cap implies the
rationality of the $T$-depth $m$ theory of the $(a_1,a_2)$-cap.
\end{lem}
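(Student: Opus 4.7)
The plan is to derive $T$-depth $m$ rationality for the $(a_1,a_2)$-cap from its $\mathbf{T}$-depth $m$ rationality by specializing $s_3 = 0$ after rewriting descendents of $\mathsf{1}$ in terms of descendents of the two $\mathbf{T}$-fixed point classes.

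The first step is to record the identity
\[
s_3 \cdot \mathsf{1} \;=\; [\infty] - [0] \qquad \text{in } H^*_{\mathbf{T}}(\mathbb{P}^1, \mathbb{Z}),
\]
which is verified by restriction to the two $\mathbf{T}$-fixed points of $\mathbb{P}^1$, where the tangent weights are $-s_3$ at $0$ and $s_3$ at $\infty$. Since the descendent operator $\tau_i(\gamma)$ is $\mathbb{Q}$-linear in $\gamma$, this lifts to an operator identity on the $\mathbf{T}$-equivariant cohomology of the moduli space:
\[
\tau_i(\mathsf{1})^{\mathbf{T}} \;=\; \frac{1}{s_3}\bigl(\tau_i([\infty])^{\mathbf{T}} - \tau_i([0])^{\mathbf{T}}\bigr).
\]

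Given an arbitrary $T$-depth $m$ series
\[
\mathsf{Z}^{(a_1,a_2),T}_{d,\eta}\!\left(\prod_{j=1}^k \tau_{i_j}(\mathsf{p}) \prod_{j'=1}^{k'} \tau_{i'_{j'}}(\mathsf{1})\right), \qquad k' \leq m,
\]
I would lift $\mathsf{p}$ to $[0]$ and $\mathsf{1}$ to itself in $H^*_{\mathbf{T}}(\mathbb{P}^1)$, and observe that the $T$-equivariant series is the specialization at $s_3 = 0$ of the corresponding $\mathbf{T}$-equivariant series. Applying the operator identity above to each $\tau_{i'_{j'}}(\mathsf{1})^{\mathbf{T}}$ rewrites this as
\[
\frac{1}{s_3^{k'}} \sum_{A \subseteq \{1,\ldots,k'\}} (-1)^{k'-|A|}\, \mathsf{Z}^{(a_1,a_2),\mathbf{T}}_{d,\eta}\!\left(\prod_j \tau_{i_j}([0]) \prod_{j' \in A} \tau_{i'_{j'}}([\infty]) \prod_{j' \notin A} \tau_{i'_{j'}}([0])\right)\bigg|_{s_3=0}.
\]
Each individual $\mathbf{T}$-equivariant series in the sum is a $\mathbf{T}$-depth $m$ series (having at most $|A| \leq k' \leq m$ insertions of $\tau([\infty])$), hence rational in $\mathbb{Q}(q, s_1, s_2, s_3)$ by hypothesis; their weighted sum is therefore also a rational function in these variables.

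The main obstacle is to justify that the alternating sum has no pole at $s_3 = 0$ despite the explicit $s_3^{-k'}$ factor, so that the specialization is well-defined. This pole-cancellation is automatic: before expansion, the sum equals $\mathsf{Z}^{(a_1,a_2),\mathbf{T}}_{d,\eta}(\prod \tau_{i_j}([0]) \prod \tau_{i'_{j'}}(\mathsf{1}))$, whose insertions $\tau_{i'_{j'}}(\mathsf{1})^{\mathbf{T}}$ are defined integrally over $\mathbb{Z}[s_1,s_2,s_3]$ (no $s_3$-denominator is introduced by the class $\mathsf{1}$), and whose $\mathbf{T}$-localization formula restricts at $s_3 = 0$ to the well-defined $T$-equivariant integral obtained by replacing $[0]$ with $\mathsf{p}$. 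Taking the $s_3 \to 0$ limit therefore yields a rational function in $\mathbb{Q}(q, s_1, s_2)$, which is the original $T$-depth $m$ series, establishing the desired $T$-depth $m$ rationality for the $(a_1, a_2)$-cap.
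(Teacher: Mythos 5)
Your proposal is correct and follows essentially the same route as the paper: the paper's proof is precisely the localization identity $\mathsf{1} = -[0]/s_3 + [\infty]/s_3$, the observation that $k'\le m$ insertions of $\tau(\mathsf{1})$ produce at most $m$ insertions of $\tau([\infty])$ so the $\mathbf{T}$-depth $m$ hypothesis applies, followed by the specialization $s_3=0$. Your additional justification that the apparent $s_3^{-k'}$ pole cancels (because the alternating sum is the well-defined $\mathbf{T}$-equivariant series with $\tau(\mathsf{1})$ insertions) is a detail the paper leaves implicit, and it is correct.
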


\begin{proof}
The identity class
$1\in H^*_T(\Pp,\mathbb{Z})$ has a well-known
expression in terms of the $\mathbf{T}$-fixed point classes
$$1 = -\frac{[0]}{s_3} + \frac{[\infty]}{s_3}\ .$$
We can calculate at most $m$ descendents of $1$ in  the $T$-equivariant
theory via at most 
$m$ descendents of $[\infty]$ in the $\mathbf{T}$-equivariant
theory (followed the specialization $s_3=0$).
\end{proof}

\section{Rubber calculus} \label{rubc}

\subsection{Overview}
We collect here results concerning 
the rubber calculus which will be needed to complete the
proof of Theorem \ref{cnnn}. Our discussion of the
rubber calculus follows the treatment given in Section 4.8-4.9 of
\cite{lcdt}.

\subsection{Universal 3-fold $\mathcal{R}$}
Consider the moduli space of stable pairs on rubber 
$P_n(R/R_0\cup R_\infty)^\sim$ discussed in Section \ref{rubcon}.
Let  
$$\pi:\mathcal{R} \rarr {P_n(R/R_0\cup R_\infty,d)}^\sim$$
denote the universal 3-fold.
The space $\mathcal{R}$ can be viewed as a moduli space of stable pairs on
rubber {\em together} with a point $r$ of the 3-fold rubber.
The point $r$ is {\em not} permitted to lie on the relative divisors $R_0$ and
$R_\infty$. The stability condition is given by finiteness of the
associated automorphism group.
The virtual class of ${\mathcal R}$ is obtained via $\pi$-flat pull-back,
$$[{\mathcal R}]^{vir} = \pi^* \Big( [{P_n(R/R_0\cup R_\infty,d)}^\sim]^{vir}\Big).$$
As before, let
$$\mathbb{F} \rarr {\mathcal R}$$
denote the universal sheaf on ${\mathcal R}$.

The target point $r$ together with $R_0$ and $R_\infty$
specifies 3 distinct points of the destabilized $\Pp$ over which the rubber is fibered.
By viewing the target point as $1\in \Pp$,
we obtain a rigidification map to the tube, 
$$\phi: \mathcal{R} \rarr P_n(N/N_0\cup N_\infty,d),$$
where $N=\cO_\Pp \oplus \cO_\Pp$ is the trivial bundle over $\Pp$. 
By a comparison of deformation theories,
\begin{equation}\label{zek}
[{\mathcal R}]^{vir} = \phi^* \Big( [{P_n(N/N_0\cup N_\infty,d)}]^{vir}\Big).
\end{equation}

\subsection{Rubber descendents} \label{papap}
Rubber calculus transfers $T$-equivariant rubber descendent
integrals to $T$-equivariant descendent integrals
for the $(0,0)$-tube geometry via the maps $\pi$ and $\phi$.
Consider the rubber descendent
\begin{equation} \label{dref} 
\Big\langle \mu \ \Big| \ \psi_0^\ell \ \tau_{c}\cdot \prod_{j=1}^k \tau_{i_j} 
\ \Big|\ \nu \Big
\rangle_{n,d}^{\sim}\ .
\end{equation}
As before, $\psi_0$ is the cotangent line at the dynamical point
$0\in \Pp$. The action of the rubber descendent $\tau_{i}$ is defined
via the universal sheaf $\mathbb{F}$ by the operation
$$
\pi_{*}\big( \text{ch}_{2+i}(\FF)
\cap(\pi^*(\ \cdot\ )\big)\colon 
H_*(P_{n}(N/N_0\cup N_\infty,d))\to H_*(P_{n}(N/N_0\cup N_\infty,d))\ .
$$
By the push-pull formula, the integral \eqref{dref} equals
\begin{equation}\label{zex}
\Big\langle \mu \ \Big|\ \text{ch}_{2+c}(\mathbb{F}) \ \pi^*\left(\psi_0^\ell
 \cdot \prod_{j=1}^k \tau_{i_j}
\right) \ \Big|\ \nu \Big
\rangle_{n,d}^{{\mathcal R}\sim}.
\end{equation}

Next,
we compare the cotangent lines $\pi^*(\psi_0)$ and $\phi^*(\psi_0)$ on
${\mathcal R}$. A standard argument yields
$$\pi^*(\psi_0)=\phi^*(\psi_0) - \phi^*(D_0),$$
where
 $$D_0 \subset I_n(N/N_0\cup N_\infty,d)$$
is the virtual boundary divisor
for which the rubber over $\infty$ carries Euler characteristic $n$.
We will apply the cotangent line comparisons to  \eqref{zex}.
The basic vanishing
\begin{equation}\label{gbb6}
\psi_0|_{D_0}=0
\end{equation}
holds.

Consider the Hilbert scheme of points ${\text{Hilb}}(R_0,d)$ of the
relative divisor.
The boundary condition $\mu$ corresponds to a Nakajima basis element
of $A^*_T({\text{Hilb}}(R_0,d))$.
Let
${\mathbb{F}}_0$ be the universal quotient sheaf on 
$$\text{Hilb}(R_0,d) \times R_0,$$ and define the descendent
\begin{equation}\label{mrr}
\tau_c=\pi_*\Big( {\text{ch}}_{2+c}({\mathbb{F}}_0)\Big) 
\in A^c_T({\text{Hilb}}(R_0,d))
\end{equation}
where $\pi$ is the projection
$$\pi: \text{Hilb}(R_0,d) \times R_0 \rightarrow
\text{Hilb}(R_0,d)\ .
$$

The cotangent line comparisons, equation \eqref{zex},
and the vanishing \eqref{gbb6} together yield
the following result,
\begin{multline}\label{dx}
\Big\langle \mu \ \Big| \   \psi_0^\ell \ \tau_{c}\cdot \prod_{j=1}^k \tau_{i_j}
\ \Big|\ \nu \Big\rangle_{n,d}^{\sim} =
\\ \Big\langle \mu \ \Big|\ 
\psi_0^\ell \ \tau_{c}(\mathsf{p}) \cdot \prod_{j=1}^k \tau_{i_j}
\ \Big|\ \nu \Big
\rangle_{n,d}^{\mathsf{tube},T} \\ 
 - \Big\langle \tau_c\cdot \mu \ \Big| 
\ \psi_0^{\ell-1}  \prod_{j=1}^k \tau_{i_j}     \ \Big|\ \nu 
\Big\rangle_{n,d}^{\sim} \ .
\end{multline}
Equation \eqref{dx} will be the main required property of the rubber
calculus.

\section{Capped 1-leg descendents: full} \label{444}

\subsection{Overview} We complete the proof of Theorem \ref{cnnn}
using the interplay between the $\mathbf{T}$-equivariant localization of the cap
and the theory of rubber integrals. A similar strategy was used in \cite{vir}
to prove the Virasoro constraints for target curves.
As a consequence, we will also obtain  a special case of Theorem 
\ref{tnnn}.

Let $N$ be a split rank 2 bundle on 
a nonsingular projective curve $C$ of genus $g$.
Let $S\subset N$ be the relative divisor associated
to the points $p_1,\ldots, p_r\in C$.
We consider the $T$-equivariant stable pairs theory of $N/S$
with respect to the scaling action.

\begin{prop}
 If
$\gamma_j \in H^{2*}(C,\mathbb{Z})$ are {\em even} cohomology classes, then
\label{pnnn} 
$$\ZZ_{d,\eta^1,\dots,\eta^r}
^{N/S}\left(   \prod_{j=1}^k \tau_{i_j}(\gamma_{j})
\right)^{{T}}$$ is the 
Laurent expansion in $q$ of a rational function in $\mathbb{Q}(q,s_1,s_2)$.
\end{prop}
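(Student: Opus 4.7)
The plan is to deduce Proposition \ref{pnnn} from the full version of Theorem \ref{cnnn}, whose proof is to be completed in this section by combining $\mathbf{T}$-equivariant localization with the rubber calculus of Section \ref{rubc}. Since $\gamma_j\in H^{2*}(C,\mathbb{Z})$, by linearity we may assume each insertion is $\tau_{i_j}(\mathsf{1})$ or $\tau_{i_j}(\mathsf{p})$. Lemma \ref{rtt5} together with the final Lemma of Section \ref{depp} (relating $\mathbf{T}$-depth and $T$-depth for twisted caps) reduce the assertion to establishing, for every twisted $(a_1,a_2)$-cap and every $m\geq 0$, the rationality of the $\mathbf{T}$-depth $m$ partition function
\[
{\mathsf Z}^{(a_1,a_2)}_{d,\eta}\Big(\prod_{j=1}^k\tau_{i_j}([0])\ \prod_{j'=1}^{k'}\tau_{i'_{j'}}([\infty])\Big)^{\mathbf{T}},\qquad k'\leq m.
\]

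I would argue by induction on $m$. The base case $m=0$ is Proposition \ref{ctttt}. For the inductive step, I would apply the $\mathbf{T}$-equivariant localization formula \eqref{fred} to the twisted cap. The $[0]$-descendents remain in the vertex factor $\bW^{\mathsf{Vert}}_\mu$, the edge factor $\bW^{(a_1,a_2)}_\mu$ is unchanged, and each insertion $\tau_{i'_{j'}}([\infty])$ restricts, on the $\mathbf{T}$-fixed locus over $\infty$, to a rubber descendent on $P_n(R/R_0\cup R_\infty,d)^\sim$. The rubber calculus identity \eqref{dx} then trades a single rubber descendent $\tau_c$ either for a descendent $\tau_c(\mathsf{p})$ on the $(0,0)$-tube (with $\psi_0$-order unchanged) or for the same rubber integral with $\tau_c$ absorbed into the Hilbert-scheme boundary class $\mu$ (with $\psi_0$-order dropped by one). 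Iterating \eqref{dx} eventually disperses every rubber descendent either to the tube or to the boundary, producing a finite sum of tube integrals and descendent-free rubber integrals.

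The tube contributions that appear involve only $\mathsf{p}$-descendents and are $T$-rational by Lemma \ref{ht99} applied to the $(0,0)$-tube. The remaining descendent-free rubber integrals, paired against the expansion of $1/(s_3-\psi_0)$, can be recovered from the $\mathbf{T}$-equivariant cap and tube partition functions without insertions by inverting the localization formula \eqref{fred}; the necessary insertion-free rationality is provided by \cite{mpt,lcdt}. Combining these pieces with the rationality of $\bW^{\mathsf{Vert}}_\mu$ (Theorem \ref{canpole} and Proposition \ref{cttt}) and of $\bW^{(a_1,a_2)}_\mu$, the $\mu$-sum in \eqref{fred} yields a rational function in $\mathbb{Q}(q,s_1,s_2,s_3)$, completing the inductive step.

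The main obstacle is closing the recursion. After each application of \eqref{dx}, the twisted boundary $\tau_c\cdot\mu$ must be re-expanded in the Nakajima basis of $A^*_T(\mathrm{Hilb}(\mathbb{C}^2,d),\mathbb{Q})$ to bring the resulting rubber integral back into the form required by the inductive hypothesis, and one must simultaneously manage an outer induction on the number $k'$ of $[\infty]$-descendents together with an inner induction on $\psi_0$-order without circularity. A secondary difficulty is to track the $s_3$-dependence consistently: the poles in $s_3$ coming from $\mathbf{T}$-localization, from the propagator $1/(s_3-\psi_0)$, and from the Nakajima re-expansion must cancel across the $\mu$-sum so that the final answer lies in $\mathbb{Q}(q,s_1,s_2,s_3)$ rather than in any larger fraction field. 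Once the $\mathbf{T}$-depth $m$ rationality of every twisted cap is in hand, Proposition \ref{pnnn} follows directly from the two reduction lemmas.
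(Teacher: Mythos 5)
Your reduction of Proposition \ref{pnnn} to the $\mathbf{T}$-depth $m$ rationality of twisted caps via Lemma \ref{rtt5} and the $s_3=0$ specialization is exactly the route taken in the paper, and running the inductive step through $\mathbf{T}$-localization and the rubber identity \eqref{dx} is also the right idea. The genuine gap is in how you close the induction. After applying \eqref{dx} and the topological recursion to the rubber terms, the tube series that appear for $m\geq 1$ are \emph{not} pure $\mathsf{p}$-descendent series: when one rubber descendent is localized at the breaking point, the remaining rubber descendents become descendents of the identity class $\mathsf{1}$ on the $(0,0)$-tube. In the paper's $m=1$ computation the term ${\mathsf Z}^{\mathsf{tube}}_{d,\widehat{\eta},\eta}\left(\tau_{i_1'}([\infty])\cdot \tau_{i'_2}(\mathsf{1})\right)^T$ appears explicitly. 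Lemma \ref{ht99} covers only $T$-depth $0$, i.e.\ descendents of $\mathsf{p}$, so your appeal to it cannot justify these contributions. What the localization/rubber argument actually proves is Lemma \ref{p45} (and its twisted version Lemma \ref{p456}): $T$-depth $m$ rationality of the $(0,0)$-\emph{tube} implies $\mathbf{T}$-depth $m+1$ rationality of the cap.

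To close the loop one needs the converse direction, that $T$-depth $m$ rationality of the cap implies $T$-depth $m$ rationality of the tube (Lemma \ref{nndd}), and this step is not automatic: it is proved by degenerating the tube, exchanging the newly created relative boundary condition for descendents of $\mathsf{p}$ on a bubbled-off $(0,0)$-cap, and inverting the full-rank matrix $M_d$ of cap series indexed by descendent monomials $L=\prod_i\tau_i(\mathsf{p})^{n_i}$ and boundary partitions $\mu$. Your proposal never invokes this relative/descendent exchange, so the induction on $m$ for the cap alone does not close; the correct induction alternates cap $\Rightarrow$ tube $\Rightarrow$ cap, raising the depth by one at each cap stage. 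You flag the circularity risk yourself, but the resolution you offer (Lemma \ref{ht99}) is insufficient. A secondary imprecision: the descendent-free rubber integrals $\mathsf{S}^\mu_{\widehat\eta}$ need not be shown rational individually by ``inverting'' \eqref{fred}; in the paper they are simply reassembled with $\bW_\mu^{\mathsf{Vert}}$ and $\bW_\mu^{(0,0)}$ into the depth-$0$ cap series ${\mathsf Z}^{\mathsf{cap}}_{d,\widehat\eta}\left(\prod_j\tau_{i_j}([0])\right)^{\mathbf{T}}$, whose rationality is Proposition \ref{cttt}.
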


Proposition \ref{pnnn} is the restriction of Theorem \ref{tnnn}
to even cohomology. The proof is given in Section \ref{jj367}.
The proof of Theorem \ref{tnnn} will be completed with the
inclusion of descendents of odd cohomology in Section \ref{555}.

\subsection{Induction II}

The first half of our induction argument was established
in Lemma \ref{nndd}.
The second half relates the $(0,0)$-tube back to the 
$(0,0)$-cap with an increase in depth.

\begin{lem} The rationality of \label{p45} 
the ${T}$-depth $m$
theory of the $(0,0)$-tube implies the
rationality of $\mathbf{T}$-depth $m+1$ theory of the $(0,0)$-cap.
\end{lem}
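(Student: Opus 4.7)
The plan is to combine $\mathbf{T}$-equivariant localization on the $(0,0)$-cap with the rubber calculus of Section~\ref{rubc}, following the evaluation strategy of Section~\ref{ggtt2}. For each positive integer $a$, I will evaluate the $\mathbf{T}$-depth $m+1$ cap series at $s_3=\tfrac{1}{a}(s_1+s_2)$, verify that the evaluation is rational in $\Q(q,s_1,s_2)$, and then conclude rationality in $\Q(q,s_1,s_2,s_3)$ via the Vandermonde argument combined with a polynomial $s_3$-dependence statement analogous to Lemma~\ref{rq2}. After applying $\mathbf{T}$-localization as in \eqref{fred}, the cap integral decomposes as
\begin{multline*}
\mathsf{Z}^{(0,0)}_{d,\eta}\!\left(\prod_{j=1}^{k}\tau_{i_{j}}([0])\prod_{j'=1}^{k'}\tau_{i'_{j'}}([\infty])\right)^{\!\mathbf{T}} \\ = \sum_{|\mu|=d}\bW^{\mathsf{Vert}}_{\mu}\!\left(\prod_{j=1}^{k}\tau_{i_{j}}([0])\right)\bW^{(0,0)}_{\mu}\,\widetilde{\mathsf{S}}^{\mu}_{\eta}\!\left(\prod_{j'=1}^{k'}\tau_{i'_{j'}}\right),
\end{multline*}
where the $[0]$-descendents contribute to the vertex and the $[\infty]$-descendents localize to $k'\le m+1$ rubber descendents inside an augmented rubber factor $\widetilde{\mathsf{S}}^{\mu}_{\eta}$ carrying the $\tfrac{1}{s_3-\psi_0}$ kernel. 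By Theorem~\ref{canpole}, the vertex evaluation at $s_3=\tfrac{1}{a}(s_1+s_2)$ is a Laurent polynomial in $q$ over $\Q(s_1,s_2)$, and the edge $\bW^{(0,0)}_{\mu}$ is $s_3$-independent.

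The key reduction is to express $\widetilde{\mathsf{S}}^{\mu}_{\eta}$ in terms of tube integrals of $T$-depth at most $m$. I apply the rubber calculus identity \eqref{dx} iteratively: each application strips one rubber descendent $\tau_c$ to a tube insertion $\tau_c(\mathsf{p})$ via the rigidification $\phi$, while the remaining $\le m$ rubber descendents pass to tube insertions of $\tau(\mathsf{1})$ (identity-class descendents on the rigidified tube). The resulting tube integrals carry at most $m$ descendents of $\mathsf{1}$ and are rational in $\Q(q,s_1,s_2)$ by the hypothesis of the lemma. The subleading terms in \eqref{dx} are sub-rubber integrals with $\psi_0$-power reduced by one and modified boundary $\tau_c\cdot\mu$; these are handled by induction on the descendent count $k'$, with base case $k'=0$ a pure rubber integral (no insertions), known to be rational by \cite{mpt,lcdt}. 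Assembling the pieces at $s_3=\tfrac{1}{a}(s_1+s_2)$, the cap integral becomes a finite sum of rational functions in $\Q(q,s_1,s_2)$, and the Vandermonde argument of Section~\ref{ggtt2} completes the proof.

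The main obstacle will be controlling the $\psi_0^{\ell}$ insertions produced by the expansion $\tfrac{1}{s_3-\psi_0}=\sum_{\ell\ge 0}\psi_0^{\ell}/s_3^{\ell+1}$: iterating \eqref{dx} a priori yields tube integrals carrying cotangent-line powers $\psi_0^{\ell}$ not directly covered by the definition of $T$-depth. A careful accounting of $\psi_0$-power reductions at each iteration, together with the vanishing $\psi_0|_{D_0}=0$ underlying the derivation of \eqref{dx}, should ensure the iteration terminates in finitely many steps, leaving a linear combination of standard $T$-depth-$m$ tube integrals and pure rubber integrals that match the known rational inputs.
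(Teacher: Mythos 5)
Your proposal assembles the right raw ingredients (the localization formula for the cap and the rubber calculus of Section \ref{rubc}), but the obstacle you flag in your last paragraph is exactly where the argument is incomplete, and it is not resolved by ``careful accounting'' of iterations of \eqref{dx}. Expanding $\frac{1}{s_3-\psi_0}=\sum_{\ell\ge 0}\psi_0^{\ell}/s_3^{\ell+1}$ and applying \eqref{dx} termwise transfers the full power $\psi_0^{\ell}$ onto the rigidified tube: the first term on the right of \eqref{dx} is a \emph{tube} integral still carrying $\psi_0^{\ell}$, and $\ell$ is unbounded. Such integrals are not $T$-depth $m$ tube series, so the hypothesis of the lemma says nothing about them, and the iteration does not terminate into ``standard $T$-depth-$m$ tube integrals.'' The missing idea is the topological recursion relation for $\psi_0$ applied after writing $\frac{s_3}{s_3-\psi_0}=1+\frac{\psi_0}{s_3-\psi_0}$ as in \eqref{nhhk}: the TRR splits the rubber at the cotangent line, pushing the \emph{entire} kernel $\frac{1}{s_3-\psi_0}$ onto a descendent-free sub-rubber (which regenerates the self-similar factor $\mathsf{S}^{\mu}_{\widehat{\eta}}$) while the stripped descendent lands on a tube factor ${\mathsf Z}^{\mathsf{tube}}_{d,\widehat{\eta},\eta}(\tau_{i'_1}([\infty]))^T$ carrying \emph{no} $\psi_0$ insertion; the boundary corrections $-\mathsf{S}^{\tau_{i'}\cdot\mu}_{\eta}$ produced by \eqref{dx} then cancel against the terms in the localization formula where the descendent acts on $\mathsf{P}_\mu$. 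Iterating this yields depth-$m$ tube factors (at most $m$ insertions of $\tau(\mathsf{1})$) and pure factors $\mathsf{S}^{\mu}_{\widehat{\eta}}$.

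A second, related point: you cannot treat the base case as ``a pure rubber integral, known to be rational by \cite{mpt,lcdt}.'' The factor $\mathsf{S}^{\mu}_{\widehat{\eta}}$ is not separately shown to be rational; it must be recombined with $\bW^{\mathsf{Vert}}_{\mu}$ and $\bW^{(0,0)}_{\mu}$ to reassemble the depth-$0$ cap series ${\mathsf Z}^{\mathsf{cap}}_{d,\widehat{\eta}}(\prod_j\tau_{i_j}([0]))^{\mathbf{T}}$, whose rationality is Proposition \ref{cttt}. Once this reassembly is done, one obtains the clean identity
\begin{equation*}
{\mathsf Z}^{\mathsf{cap}}_{d,\eta}\Big(\textstyle\prod_{j}\tau_{i_j}([0])\cdot\tau_{i'_1}([\infty])\Big)^{\mathbf{T}}
=\sum_{|\widehat{\eta}|=d}{\mathsf Z}^{\mathsf{cap}}_{d,\widehat{\eta}}\Big(\textstyle\prod_{j}\tau_{i_j}([0])\Big)^{\mathbf{T}}\cdot\frac{g^{\widehat{\eta}\widehat{\eta}}}{q^d}\cdot{\mathsf Z}^{\mathsf{tube}}_{d,\widehat{\eta},\eta}\big(\tau_{i'_1}([\infty])\big)^{T}
\end{equation*}
(and its higher-depth analogues), which gives rationality directly in $\Q(q,s_1,s_2,s_3)$. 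In particular the entire evaluation-at-$s_3=\frac{1}{a}(s_1+s_2)$ plus Vandermonde wrapper is unnecessary here; it is needed only for the depth-$0$ vertex analysis, where the $s_3$-dependence of $\bW^{\mathsf{Vert}}_\mu$ itself must be controlled.
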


\begin{proof}
The result follows from the $\mathbf{T}$-equivariant
localization
formula for the $(0,0)$-cap and the rubber calculus of Section
\ref{papap}.
To illustrate the method, consider first the $m=0$ 
case of Lemma \ref{p45}.

The localization formula for 
$\mathbf{T}$-depth 1 series for the $(0,0)$-cap  is the following:
\begin{multline*}
{\mathsf Z}^{\mathsf{cap}}_{d,\eta} 
\left(   \prod_{j=1}^k \tau_{i_j}([0]) \cdot \tau_{i'_1}([\infty])
\right)^{\mathbf{T}} = 
\\
\sum_{|\mu|=d}
\bW_\mu^{\mathsf{Vert}} \left(\prod_{j=1}^k \tau_{i_j}([0])      \right) \cdot
\bW_\mu^{(0,0)} \cdot
\left( \mathsf{S}^{\tau_{i'_1}\cdot\mu}_\eta+ 
\mathsf{S}^{\mu}_{\eta}(\tau_{i'_1}) \right) \ ,
\end{multline*}
where the rubber terms on the right are 
\begin{eqnarray*}
\mathsf{S}^{\tau_{i'_1}\cdot \mu}_\eta & = &   
\sum_{n\geq d} q^{n}
\left\langle \tau_{i'_1}\cdot \mathsf{P}_\mu \ \left| \ \frac{1}{s_3-\psi_0}  \ \right|\ \CC_\eta 
\right\rangle_{n,d}^{
\sim} ,
 \\
\mathsf{S}^\mu_\eta(\tau_{i'_1}) & = &   
\sum_{n\geq d} q^{n}
\left\langle \mathsf{P}_\mu \ \left| \ \frac{ s_3 \tau_{i'_1}}{s_3-\psi_0}  \ \right|\ \CC_\eta 
\right\rangle_{n,d}^{
\sim}. \\
\end{eqnarray*}

In the first rubber term, $\tau_{i'_1}$ acts
on the boundary condition $P_\mu$ via
\eqref{mrr}. The term arises from the
distribution of the Chern character
of the descendent 
$\tau_{i'_1}([\infty])$
away from the rubber.

The second rubber term simplifies
via  the
topological recursion relation for $\psi_0$ after
writing
\begin{equation}\label{nhhk}
\frac{s_3}{s_3-\psi_0} = 1 + \frac{\psi_0}{s_3-\psi_0}\ 
\end{equation}
and the rubber calculus relation \eqref{dx}. We find
\begin{eqnarray*}
\mathsf{S}^\mu_\eta(\tau_{i'_1}) 
& = &
\sum_{|\widehat{\eta}|=d} \mathsf{S}^\mu_{\widehat{\eta}} 
\cdot \frac{g^{\widehat{\eta}\widehat{\eta}}}{q^d} \cdot 
 {\mathsf Z}^{\mathsf{tube}}_{d,\widehat{\eta},\eta} 
\left(   \tau_{i_1'}([\infty]) \right)^T
 \  -\  \mathsf{S}^{\tau_{i'_1}\cdot \mu}_\eta
.
\end{eqnarray*}
The leading  $1$ on the right side of \eqref{nhhk} corresponds to
the degenerate leading term of $\mathsf{S}^\mu_{\widehat{\eta}}$.
The topological recursion applied to the $\psi_0$ prefactor
of the second term produces the rest of $\mathsf{S}^\mu_{\widehat{\eta}}$.
The superscript $\mathsf{tube}$ refers here to
the $(0,0)$-tube.
The rubber calculus produces the correction
$-\mathsf{S}^{\tau_{i'_1}\cdot \mu}_\eta$.

After reassembling the localization formula, we find
\begin{multline*}
{\mathsf Z}^{\mathsf{cap}}_{d,\eta} 
\left(   \prod_{j=1}^k \tau_{i_j}([0]) \cdot \tau_{i'_1}([\infty])
\right)^{\mathbf{T}} = 
\\
\sum_{|\widehat{\eta}|=d}
{\mathsf Z}^{\mathsf{cap}}_{d,\widehat{\eta}} 
\left(   \prod_{j=1}^k \tau_{i_j}([0]) 
\right)^{\mathbf{T}}
\cdot \frac{g^{\widehat{\eta}\widehat{\eta}}}{q^d} \cdot 
 {\mathsf Z}^{\mathsf{tube}}_{d,\widehat{\eta},\eta} 
\left(   \tau_{i_1'}([\infty]) \right)^T
\end{multline*}
which implies the $m=0$ case of Lemma \ref{p45}.

The above method of expressing the $\mathbf{T}$-depth
$m+1$ theory of the $(0,0)$-cap in terms of
the  $\mathbf{T}$-depth
$0$ theory of the $(0,0)$-cap and the
$T$-depth $m$ theory of the $(0,0)$-tube is valid for
all $m$. 

Consider the $m=1$ case.
The localization formula for 
$\mathbf{T}$-depth 2 series for the $(0,0)$-cap  is the following:
\begin{multline*}
{\mathsf Z}^{\mathsf{cap}}_{d,\eta} 
\left(   \prod_{j=1}^k \tau_{i_j}([0]) \cdot \tau_{i'_1}([\infty])
\tau_{i_2}([\infty])
\right)^{\mathbf{T}} = 
\\
\sum_{|\mu|=d}
\bW_\mu^{\mathsf{Vert}} \left(\prod_{j=1}^k \tau_{i_j}([0])      \right) \cdot
\bW_\mu^{(0,0)} \ \ \ \ \ \ \ \ \ \ \ \ \ \ \ \ \ \ \ \ \ \ 
\ \ \ \ \ \ \  \\
\cdot \left( \mathsf{S}^{\tau_{i'_1}\tau_{i'_2}\cdot\mu}_\eta+ 
\mathsf{S}^{\tau_{i'_1}\cdot \mu}_{\eta}(\tau_{i'_2}) 
+\mathsf{S}^{\tau_{i'_2}\cdot \mu}_{\eta}(\tau_{i'_1})
+\mathsf{S}^{\mu}_{\eta}(\tau_{i'_1}\tau_{i'_2})
\right) \ ,
\end{multline*}
where the rubber terms on the right are 
\begin{eqnarray*}
\mathsf{S}^{\tau_{i'_1}\tau_{i'_2}\cdot \mu}_\eta & = &   
\sum_{n\geq d} q^{n}
\left\langle \tau_{i'_1}\tau_{i'_2}\cdot \mathsf{P}_\mu \ \left| \ \frac{1}{s_3-\psi_0}  \ \right|\ \CC_\eta 
\right\rangle_{n,d}^{
\sim} ,
 \\
\mathsf{S}^{\tau_{i'_1}\cdot \mu}_\eta(\tau_{i'_2}) & = &   
\sum_{n\geq d} q^{n}
\left\langle \tau_{i'_1}\cdot \mathsf{P}_\mu \ \left| \ \frac{ s_3 \tau_{i'_2}}{s_3-\psi_0}  \ \right|\ \CC_\eta 
\right\rangle_{n,d}^{
\sim}, \\
 \\
\mathsf{S}^{\tau_{i'_2}\cdot \mu}_\eta(\tau_{i'_1}) & = &   
\sum_{n\geq d} q^{n}
\left\langle \tau_{i'_2}\cdot \mathsf{P}_\mu \ \left| \ \frac{ s_3 \tau_{i'_1}}{s_3-\psi_0}  \ \right|\ \CC_\eta 
\right\rangle_{n,d}^{
\sim}, \\
 \\
\mathsf{S}^{\mu}_\eta(\tau_{i'_1}\tau_{i'_2}) & = &   
\sum_{n\geq d} q^{n}
\left\langle  \mathsf{P}_\mu \ \left| \ \frac{ s_3^2 \tau_{i'_1}\tau_{i'_2}
}{s_3-\psi_0}  \ \right|\ \CC_\eta 
\right\rangle_{n,d}^{
\sim}. \\
\end{eqnarray*}
Using \eqref{nhhk} and the rubber calculus relation \eqref{dx}, we find
\begin{eqnarray*}
\mathsf{S}^\mu_\eta(\tau_{i'_1}\tau_{i'_2}) 
& = &\ \ 
\sum_{|\widehat{\eta}|=d} \mathsf{S}^\mu_{\widehat{\eta}} 
\cdot \frac{g^{\widehat{\eta}\widehat{\eta}}}{q^d} \cdot 
 {\mathsf Z}^{\mathsf{tube}}_{d,\widehat{\eta},\eta} 
\left(   \tau_{i_1'}([\infty])\cdot \tau_{i'_2}(1) \right)^T
 \  -\  \mathsf{S}^{\tau_{i'_1}\cdot \mu}_\eta(\tau_{i'_2})\\
& & + \sum_{|\widehat{\eta}|=d} \mathsf{S}^\mu_{\widehat{\eta}}(\tau_{i'_2}) 
\cdot \frac{g^{\widehat{\eta}\widehat{\eta}}}{q^d} 
 {\mathsf Z}^{\mathsf{tube}}_{d,\widehat{\eta},\eta} 
\left(   \tau_{i_1'}([\infty]) \right)^T
.
\end{eqnarray*}
As we have seen before,
\begin{eqnarray*}
\mathsf{S}^\mu_{\widehat{\eta}}(\tau_{i'_2}) 
& = &
\sum_{|\widehat{\mu}|=d} \mathsf{S}^\mu_{\widehat{\mu}} 
\cdot \frac{g^{\widehat{\mu}\widehat{\mu}}}{q^d} \cdot 
 {\mathsf Z}^{\mathsf{tube}}_{d,\widehat{\mu},\widehat{\eta}} 
\left(   \tau_{i_2'}([\infty]) \right)^T
 \  -\  \mathsf{S}^{\tau_{i'_2}\cdot \mu}_{\widehat{\eta}}, \\
\mathsf{S}^{\tau_{i'_2}\cdot \mu}_\eta(\tau_{i'_1}) 
& = &
\sum_{|\widehat{\eta}|=d} \mathsf{S}^{\tau_{i'_2}\cdot\mu}_{\widehat{\eta}} 
\cdot \frac{g^{\widehat{\eta}\widehat{\eta}}}{q^d} \cdot 
 {\mathsf Z}^{\mathsf{tube}}_{d,\widehat{\eta},\eta} 
\left(   \tau_{i_1'}([\infty]) \right)^T
 \  -\  \mathsf{S}^{\tau_{i'_1}\tau_{i'_2}\cdot \mu}_\eta
.
\end{eqnarray*}
After adding everything together, we have for $m=1$
the relation:
\begin{multline*}
{\mathsf Z}^{\mathsf{cap}}_{d,\eta} 
\left(   \prod_{j=1}^k \tau_{i_j}([0]) \cdot \prod_{j'=1}^2 
\tau_{i'_{j'}}([\infty])
\right)^{\mathbf{T}} = 
\\
+s_3 \sum_{|\widehat{\eta}|=d}
{\mathsf Z}^{\mathsf{cap}}_{d,\widehat{\eta}} 
\left(   \prod_{j=1}^k \tau_{i_j}([0]) 
\right)^{\mathbf{T}}
\cdot \frac{g^{\widehat{\eta}\widehat{\eta}}}{q^d} \cdot 
 {\mathsf Z}^{\mathsf{tube}}_{d,\widehat{\eta},\eta} 
\left( \tau_{i_1'}([\infty])\cdot  \tau_{i_2'}(1) \right)^T \\
+\sum_{|\widehat{\mu}|,|\widehat{\eta}|=d}
{\mathsf Z}^{\mathsf{cap}}_{d,\widehat{\mu}} 
\left(   \prod_{j=1}^k \tau_{i_j}([0]) 
\right)^{\mathbf{T}}
\cdot \frac{g^{\widehat{\mu}\widehat{\mu}}}{q^d} \cdot 
 {\mathsf Z}^{\mathsf{tube}}_{d,\widehat{\mu},\widehat{\eta}} 
\left(
\tau_{i'_{2}}([\infty])    \right)^T 
\\ 
\cdot \frac{g^{\widehat{\eta}\widehat{\eta}}}{q^d} \cdot 
 {\mathsf Z}^{\mathsf{tube}}_{d,\widehat{\eta},{\eta}} 
\left(
\tau_{i'_{1}}([\infty])    \right)^T 
\ . \ \ \ \ \ \ 
\end{multline*}
We leave the derivation of the parallel formula for general $m$ 
(via elementary bookkeeping) to
the reader.
\end{proof}

An identical argument yields the twisted version of
Lemma \ref{p45} for the $(a_1,a_2)$-cap.

\begin{lem} The rationality of \label{p456} 
the ${T}$-depth $m$
theory of the $(0,0)$-tube implies the
rationality of the $\mathbf{T}$-depth $m+1$ theory of the $(a_1,a_2)$-cap.
\end{lem}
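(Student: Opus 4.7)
The plan is to repeat the argument of Lemma~\ref{p45} almost verbatim, with only one substantive modification to track the twisting. I would apply $\mathbf{T}$-equivariant localization to the $(a_1,a_2)$-cap and then use the rubber calculus relation \eqref{dx} together with the topological recursion for $\psi_0$ to express $\mathbf{T}$-depth $m+1$ invariants of the twisted cap as explicit combinations of $\mathbf{T}$-depth $0$ twisted cap invariants (rational by Proposition~\ref{ctttt}) and $T$-depth $\leq m$ $(0,0)$-tube invariants (rational by hypothesis).

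First I would write down the $\mathbf{T}$-equivariant localization formula for
\begin{equation*}
\mathsf{Z}^{(a_1,a_2)}_{d,\eta}\!\left(\prod_{j=1}^k \tau_{i_j}([0]) \cdot \prod_{j'=1}^{m+1}\tau_{i'_{j'}}([\infty])\right)^{\!\!\mathbf{T}}\!.
\end{equation*}
The $\mathbf{T}$-fixed loci have the same shape as in the untwisted case: a vertex over $0\in\PP^1$, an edge, and rubber over $\infty\in\PP^1$. The descendents $\tau_{i_j}([0])$ contribute to the vertex factor $\bW_\mu^{\mathsf{Vert}}$ exactly as before, and the descendents $\tau_{i'_{j'}}([\infty])$ distribute between Chern character insertions on the rubber and factors acting on the boundary partition, producing rubber terms $\mathsf{S}^{\cdots\cdot\mu}_\eta(\cdots)$ of precisely the form seen in Lemma~\ref{p45}. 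The differences from the untwisted case are twofold: the edge weight is the twisted edge term $\bW_\mu^{(a_1,a_2)}$ (a rational function in $s_1, s_2, s_3$ with $q$-dependence $q^{-d}$, as recorded in Section 4.6 of~\cite{pt2}); and the $T$-weights on the fiber of $N$ at $\infty$ are $s_1 - a_1 s_3$ and $s_2 - a_2 s_3$, so every rubber factor $\mathsf{S}^{\cdots}_{\cdots}$ that appears is evaluated at $(s_1,s_2)\mapsto (s_1 - a_1 s_3,\, s_2 - a_2 s_3)$. The $\psi_0$ class at the dynamical point $0$ of the rubber contributes the $\mathbf{T}$-weight $s_3$ unchanged.

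Next I would apply the identity \eqref{nhhk} and the rubber calculus relation \eqref{dx} to each decorated rubber term. Since \eqref{dx} is a universal identity in the formal parameters $s_1,s_2,s_3$, it continues to hold after the shift of $(s_1,s_2)$, and the bookkeeping proceeds exactly as in the $m=0,1$ cases spelled out in Lemma~\ref{p45}. Iterating converts each decorated rubber factor into a sum of undecorated factors $\mathsf{S}^\mu_{\widehat\eta}$ (again with shifted $s_1,s_2$) glued to $T$-equivariant $(0,0)$-tube invariants via the inverse gluing tensor $g^{\widehat\eta\widehat\eta}/q^d$, with each resulting tube carrying at most $m$ descendents of the identity. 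Reassembling, one obtains
\begin{equation*}
\mathsf{Z}^{(a_1,a_2)}_{d,\eta}\!\left(\prod_{j}\tau_{i_j}([0]) \prod_{j'}\tau_{i'_{j'}}([\infty])\right)^{\!\!\mathbf{T}} = \sum \mathsf{Z}^{(a_1,a_2)}_{d,\widehat\eta}\!\left(\prod_{j}\tau_{i_j}([0])\right)^{\!\!\mathbf{T}}\!\!\cdot \Phi,
\end{equation*}
where each $\Phi$ is a product of $T$-depth $\leq m$ $(0,0)$-tube invariants (in the shifted variables) and explicit rational factors. The $\mathbf{T}$-depth $0$ twisted cap factors on the right are rational in $\Q(q,s_1,s_2,s_3)$ by Proposition~\ref{ctttt}, and the shifted tube invariants lie in $\Q(q,s_1,s_2,s_3)$ by hypothesis combined with the trivial observation that rationality over $\Q(q,s_1,s_2)$ is preserved by substituting rational expressions in $s_1,s_2,s_3$ for $s_1,s_2$. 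This yields the desired rationality.

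The step requiring the most care is the variable shift in the rubber factors. I do not expect a genuine obstacle there since \eqref{dx} is universal in $s_1,s_2,s_3$, so the substitution is harmless; the effort is purely combinatorial bookkeeping parallel to the proof of Lemma~\ref{p45}.
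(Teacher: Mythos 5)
Your proposal is correct and matches the paper's intent exactly: the paper disposes of this lemma with the single remark that ``an identical argument yields the twisted version of Lemma \ref{p45},'' and your elaboration --- same localization/rubber-calculus induction, with the twisted edge weight $\bW_\mu^{(a_1,a_2)}$ and the substitution $(s_1,s_2)\mapsto(s_1-a_1s_3,\,s_2-a_2s_3)$ in the rubber and tube factors over $\infty$ --- is precisely what that remark leaves implicit. Your identification of the variable shift as the only substantive change, and the observation that rationality survives the substitution, correctly isolates the point that needs checking.
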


\subsection{Proof of Theorem \ref{cnnn}}
Lemmas \ref{nndd} and \ref{p45}
together provide an induction which results in the
rationality of the $\mathbf{T}$-depth $m$ theory of
the $(0,0)$-cap for all $m$.
Since the classes of the $\mathbf{T}$-fixed points $0,\infty \in \Pp$
generate $H_{\mathbf{T}}^*(\Pp, \mathbb{Z})$
after localization, all partition functions
$$
{\mathsf Z}^{\mathsf{cap}}_{d,\eta} 
\left(   \prod_{j=1}^k \tau_{i_j}(\gamma_{j})
\right)^{\mathbf{T}},\ \ \ \ \gamma_{j}\in H^*_{\mathbf{T}}(\Pp,\mathbb{Z})$$
are Laurent series
in $q$ of rational functions in $\mathbb{Q}(q,s_1,s_2,s_3)$.
\qed

\subsection{Proof of Proposition \ref{pnnn}}  \label{jj367}
Using Lemma \ref{p456}, we obtain the extension of
Theorem \ref{cnnn} to twisted $(a_1,a_2)$-caps.
\begin{prop}
\label{qnnn} 
For $\gamma_{j}\in H^*_{\mathbf{T}}(\Pp,\mathbb{Z})$, the descendent series
$$
{\mathsf Z}^{(a_1,a_2)}_{d,\eta} 
\left(   \prod_{j=1}^k \tau_{i_j}(\gamma_{j})
\right)^{\mathbf{T}}$$
of the $(a_1,a_2)$-cap
is the 
Laurent expansion in $q$ of a rational function in 
$\mathbb{Q}(q,s_1,s_2,s_3)$.
\end{prop}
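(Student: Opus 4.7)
The plan is to combine the twisted stationary rationality of Proposition \ref{ctttt} with the twisted cap/tube induction of Lemma \ref{p456}. First I would reduce to the case where every insertion class $\gamma_j$ is one of the two $\mathbf{T}$-fixed point classes $[0]$ or $[\infty]$. Indeed, after inverting $s_3$ the module $H^*_{\mathbf{T}}(\mathbb{P}^1,\mathbb{Z})\otimes\mathbb{Q}(s_1,s_2,s_3)$ is spanned by $[0]$ and $[\infty]$ (as witnessed by $1=-[0]/s_3+[\infty]/s_3$), and the descendent partition function is multilinear in the insertions, so the general case reduces to this restricted one.

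After this reduction, the claim becomes precisely the rationality of the $\mathbf{T}$-depth $m$ theory of the $(a_1,a_2)$-cap for every $m\ge 0$, where $m$ is the number of $\tau_i([\infty])$-insertions. I would induct on $m$. The base case $m=0$ is exactly Proposition \ref{ctttt}, since only $\tau_i([0])$ descendents appear. For the inductive step, Lemma \ref{p456} gives the implication that rationality of the $T$-depth $m$ theory of the $(0,0)$-tube implies rationality of the $\mathbf{T}$-depth $m+1$ theory of the $(a_1,a_2)$-cap. The hypothesis here has already been established en route to Theorem \ref{cnnn}: that proof cycled Lemmas \ref{nndd} and \ref{p45} to produce, simultaneously with rationality of the $\mathbf{T}$-depth $m$ theory of the $(0,0)$-cap, rationality of the $T$-depth $m$ theory of the $(0,0)$-tube for every $m$.

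Thus no new vertex analysis or pole cancellation is required beyond what has been proven; the entire deduction is formal. The only conceptual point that deserves a check is that the twisted version of Lemma \ref{p45}, namely Lemma \ref{p456}, is uniformly valid for all $m$. This is clear from inspection: the derivation of Lemma \ref{p45} proceeds by $\mathbf{T}$-equivariant localization on the cap together with the rubber calculus identity \eqref{dx} to express a $\mathbf{T}$-depth $m+1$ cap series as bilinear combinations of lower $\mathbf{T}$-depth cap series and $T$-depth $m$ tube series, and the derivation depends on the edge weight and vertex structure of the $(a_1,a_2)$-cap only through the factor $\bW_\mu^{(a_1,a_2)}$, not on the specific value of $(a_1,a_2)$. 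I do not anticipate any genuine obstacle: the delicate analytic input, the pole cancellation of Theorem \ref{canpole}, enters only through the already proven Proposition \ref{ctttt} (and through the proof of Theorem \ref{cnnn} itself) and need not be re-invoked here.
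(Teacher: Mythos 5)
Your proposal is correct and follows essentially the same route as the paper: the paper's proof of Proposition \ref{qnnn} is the single observation that Lemma \ref{p456}, fed with the $T$-depth $m$ rationality of the $(0,0)$-tube already obtained for all $m$ in the course of proving Theorem \ref{cnnn} (via Lemmas \ref{nndd} and \ref{p45}), together with the depth-$0$ base case of Proposition \ref{ctttt} and the generation of $H^*_{\mathbf{T}}(\Pp)$ by $[0]$ and $[\infty]$ after localization, yields the result. Your framing as an induction on the number of $[\infty]$-insertions is a cosmetic repackaging of the same argument, since each step only consumes the independently known tube input.
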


By taking the $s_3=0$ specialization of Proposition
\ref{qnnn}, we obtain the rationality of the $T$-depth $m$
 theory of the $(a_1,a_2)$-cap for all $m$.
Proposition \ref{pnnn} then follows from Lemma \ref{rtt5}. \qed

\subsection{$\mathbf{T}$-equivariant tubes}
The $(a_1,a_2)$-tube is the total space of
$$\cO_{\Pp}(a_1) \oplus \cO_{\Pp}(a_2) \rightarrow \Pp$$
relative to the fibers over both $0,\infty \in \Pp$.
We lift the $\com^*$-action on $\Pp$ to $\cO_{\Pp}(a_i)$ 
with fiber weights $0$ and $a_is_3$ over $0,\infty\in \PP^1$.
The 2-dimensional torus $T$ acts on the $(a_1,a_2)$-tube
by scaling the line summands, so
we obtain a $\mathbf{T}$-action on the $(a_1,a_2)$-tube.

\begin{prop}
\label{qqnnn} 
For $\gamma_{j}\in H^*_{\mathbf{T}}(\Pp,\mathbb{Z})$, the 
descendent series
$$
{\mathsf Z}^{(a_1,a_2)}_{d,\eta_1\eta_2} 
\left(   \prod_{j=1}^k \tau_{i_j}(\gamma_{j})
\right)^{\mathbf{T}}$$
of the $(a_1,a_2)$-tube
is the 
Laurent expansion in $q$ of a rational function in 
$\mathbb{Q}(q,s_1,s_2,s_3)$.
\end{prop}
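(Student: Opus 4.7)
The proof mirrors that of Proposition \ref{qnnn}. By linearity in the cohomology classes and the basis $\{[0],[\infty]\}$ of $H_\mathbf{T}^*(\Pp)$ after localization at $s_3$, I reduce to the case $\gamma_j \in \{[0],[\infty]\}$ for all $j$.

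For the case where insertions are of a single type, I use a $\mathbf{T}$-equivariant degeneration of the $(a_1,a_2)$-cap relative at $\infty$, obtained by bubbling off the non-relative point $0 \in \Pp$. The central fiber is a $(0,0)$-cap (containing the moved $0$ and carrying the $\tau_{i_j}([0])$ insertions) glued at a node to an $(a_1,a_2)$-tube (containing $\infty$ and carrying the $\tau_{i'_{j'}}([\infty])$ insertions). The degeneration formula reads
\begin{multline*}
{\mathsf Z}^{(a_1,a_2)\mathsf{cap}}_{d,\eta}\!\left(\prod_j \tau_{i_j}([0])\prod_{j'}\tau_{i'_{j'}}([\infty])\right)^{\!\mathbf{T}} = \\
\sum_{|\mu|=|\widehat\mu|=d}\! {\mathsf Z}^{(0,0)\mathsf{cap}}_{d,\mu}\!\left(\prod_j \tau_{i_j}([0])\right)^{\!\mathbf{T}}\! \frac{g^{\mu\widehat\mu}}{q^d}\, {\mathsf Z}^{(a_1,a_2)\mathsf{tube}}_{d,\widehat\mu,\eta}\!\left(\prod_{j'}\tau_{i'_{j'}}([\infty])\right)^{\!\mathbf{T}}\!\!.
\end{multline*}
The left side is rational by Proposition \ref{qnnn}, and the cap factor on the right is rational by Proposition \ref{cttt}. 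Varying the $[0]$-insertion monomial and inverting using the full-rank property of the descendent-to-partition matrix---the $\mathbf{T}$-equivariant analog of the matrix $M_d$ from Lemma \ref{rtt5}, whose leading $q^d$ rank argument still applies since the Chern characters of the tautological rank-$d$ bundle generate $H_\mathbf{T}^*(\Hilb(\mathbb{C}^2,d))$ after localization---I conclude that the $(a_1,a_2)$-tube series with only $[\infty]$ insertions is rational. The symmetric degeneration (cap relative at $0$, bubble off $\infty$) yields the case of only $[0]$ insertions, and the analogous arguments with the $(0,0)$-cap in place of the $(a_1,a_2)$-cap handle the $(0,0)$-tube with single-type insertions.

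For the general mixed case, I apply one further $\mathbf{T}$-equivariant degeneration, this time of the $(a_1,a_2)$-tube itself, by bubbling off the relative point $0$. The central fiber is a $(0,0)$-tube carrying the $\tau_{i_j}([0])$ insertions, glued at a node to an $(a_1,a_2)$-tube carrying the $\tau_{i'_{j'}}([\infty])$ insertions; both factors are rational by the previous paragraph. The degeneration formula thus expresses ${\mathsf Z}^{(a_1,a_2)\mathsf{tube}}_{d,\eta_1,\eta_2}\!\bigl(\prod_j \tau_{i_j}(\gamma_j)\bigr)^\mathbf{T}$ as a finite sum of products of rational functions, yielding the claim. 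The main technical point is verifying the full-rank property in the $\mathbf{T}$-equivariant setting, which is a direct extension of the leading $q^d$ argument from Lemma \ref{rtt5}.
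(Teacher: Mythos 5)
Your proof is correct, and it rests on the same two pillars as the paper's argument: Proposition \ref{qnnn} for the twisted cap as the known input, and the degeneration of the $(a_1,a_2)$-cap obtained by bubbling off $0\in\Pp$, combined with the full-rank property of the descendent/partition matrix, to solve for the tube series. The difference is the choice of basis of $H^*_{\mathbf{T}}(\Pp,\mathbb{Z})$ after localization. The paper works with $\{\mathsf{1},[\infty]\}$: it feeds monomials $L$ in the $\tau([0])$ together with descendents of $\mathsf{1}$ and $[\infty]$ into the degenerated cap, and since $\tau(\mathsf{1})$ insertions distribute over both components, it runs an induction on the number of identity insertions; a single degeneration then suffices. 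You work with $\{[0],[\infty]\}$, which removes the distribution ambiguity (each point class lands on a definite component) and hence the induction, but at the price of a second degeneration --- of the tube itself along its relative divisor $N_0$ --- to assemble the mixed case from the two single-type cases, plus the $0\leftrightarrow\infty$ symmetry to obtain the $[0]$-only tube. Both routes are legitimate; yours trades the induction for extra geometric steps. Two points worth making explicit: (a) the matrix you invert is the $\mathbf{T}$-equivariant one with entries $\mathsf{Z}^{\mathsf{cap}}_{d,\mu}(L)^{\mathbf{T}}$, and its maximal rank follows from that of the $T$-equivariant matrix $M_d$ of Lemma \ref{rtt5} via the specialization $s_3=0$, which is legitimate because the coefficients are polynomial in $s_3$ by Lemma \ref{rq2}; (b) the ``cap relative at $0$'' is the standard cap only after the automorphism exchanging $0$ and $\infty$, which replaces $s_3$ by $-s_3$ and shifts the fiber linearizations --- rationality in $\mathbb{Q}(q,s_1,s_2,s_3)$ is preserved under these substitutions, so the appeal to Proposition \ref{qnnn} there is valid but not literally an instance of it as stated.
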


\begin{proof}
Consider the descendent series
\begin{equation}\label{pw4}
{\mathsf Z}^{(a_1,a_2)}_{d,\eta_2} 
\left(  L  \prod_{j'=1}^{k'} \tau_{i'_{j'}} (\mathsf{1})\ \prod_{j=1}^k \tau_{i_j}([\infty])
\right)^{\mathbf{T}}
\end{equation}
of the $(a_1,a_2)$-cap where $L$ is a monomial in the descendents of 
$[0]$.
The $(a_1,a_2)$-cap admits a $\mathbf{T}$-equivariant
degeneration to a standard $(0,0)$-cap and an $(a_1,a_2)$-tube by
bubbling off $0\in \Pp$.
The insertions $\tau_{i_j}([0])$ of $L$ are sent $\mathbf{T}$-equivariantly
to the non-relative point of the $(0,0)$-cap.
Since \eqref{pw4} is rational by Proposition \ref{qnnn} and the
matix $M_d$ of Lemma \ref{rtt5} is full rank, 
the rationality of 
$$
{\mathsf Z}^{(a_1,a_2)}_{d,\eta_1\eta_2} 
\left( \prod_{j'=1}^{k'} \tau_{i'_{j'}} (\mathsf{1})\ \prod_{j=1}^k \tau_{i_j}([\infty]) 
\right)^{\mathbf{T}}$$
follows by induction on $k'$ from the degeneration formula.
The classes $\mathsf{1}$ and $[\infty]$ generate $H_{\mathbf{T}}^*(\Pp,\mathbb{Z})$
after localization.
\end{proof}

\section{Descendents of odd cohomology} \label{555}

\subsection{Reduction to $(0,0)$}
Let $N/S$ be the relative
geometry of 
a split rank 2 bundle on a nonsingular projective curve $C$ of genus $g$.
Let 
$$\alpha_1, \ldots, \alpha_g, \beta_1, \ldots, \beta_g \in H^1(C,\mathbb{Z})$$
be a standard symplectic basis of the odd cohomology of $C$.
Proposition \ref{pnnn} establishes Theorem \ref{tnnn}
in case only the descendents of the even classes
 $\mathsf{1},\mathsf{p}\in H^*(C,\mathbb{Z})$ are present.
The descendents of $\alpha_i$ and $\beta_j$
will now be considered.

The relative geometry $N/S$ may be $T$-equivariantly degenerated to
$$\cO_C \oplus \cO_C \rightarrow C$$
and an $(a_1,a_2)$-cap. The relative points and
the  
descendents $\tau_k(\alpha_i)$ and $\tau_k(\beta_j)$ in the
integrand remain on $C$. Since the rationality of the
$T$-equivariant descendent theory  of the $(a_1,a_2)$-cap
has been proven, we may restrict our study of the descendents
of odd cohomology to the $(0,0)$ relative geometry over $C$.

\subsection{Proof of Theorem \ref{tnnn}}
The full descendent theories of $(0,0)$ relative geometries of 
curves $C$ 
are uniquely determined by the even descendent theories of $(0,0)$ relative geometries
by the following four properties:
\begin{enumerate}
\item[(i)] Algebraicity of the virtual class,
\item[(ii)] Degeneration formulas for the relative theory in the
presence of odd cohomology,
 \item[(iii)] Monodromy invariance of the relative theory,
\item[(iv)] Elliptic vanishing relations.
\end{enumerate}
The properties (i)-(iv) were used in \cite{vir} to determine the
full relative Gromov-Witten descendents of target curves in terms
of the descendents of even classes.

The results of Section 5 of \cite{vir}  are entirely formal and
apply verbatim to the descendent theory of $(0,0)$ relative geometries of 
curves. Moreover, the rationality of the even theory implies
the rationality of the full descendent theory. \qed

\section{Denominators} \label{ennd}

\subsection{Summary} We prove the
denominator claims of Conjecture \ref{222}
when only descendents of $\mathsf{1}$ and $\mathsf{p}$ 
are present. 

\begin{thm}
\label{2222}
If only descendents of even cohomology are considered, 
the denominators of the degree $d$ descendent partition functions
$\ZZ$ of Theorems \ref{onnn}, \ref{tnnn}, and \ref{cnnn}
are  products of factors of the form $q^k$ and
$$1-(-q)^r$$
for $1\leq r \leq d$.
\end{thm}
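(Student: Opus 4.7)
The plan is to trace denominators through the inductive rationality proofs of Theorems \ref{onnn}, \ref{tnnn}, and \ref{cnnn}, taking as a base input the denominator structure of the non-descendent stable pairs partition functions for toric 3-folds established in \cite{moop,mpt}. Those results already assert that partition functions like $\mathsf{Z}^{(-a,0),\mathbf{T}}_{d,\eta^0,\eta^\infty}$ and $\mathsf{Z}^{(a_1,a_2),\mathbf{T}}_{d,\eta_1,\eta_2}$ (with no descendent insertions) are Laurent expansions of rational functions in $q$ whose denominators are products of $q^k$ and $1-(-q)^r$ for $1\leq r\leq d$. The task is to propagate this property through the descendent theory.

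\textbf{Step 1: Denominators of the rubber $\mathsf{S}^\mu_\eta$.} I would first establish that, as a rational function in $q$ with coefficients in $\Q(s_1,s_2,s_3)$, each rubber series $\mathsf{S}^\mu_\eta$ has denominator a product of $q^k$ and $1-(-q)^r$ with $r\leq d$. The key tool is the $\mathbf{T}$-equivariant localization formula for the $(-a,0)$-tube used in the proof of Lemma \ref{hyy3}, which writes
\[
\mathsf{Z}^{(-a,0),\mathbf{T}}_{d,\eta^0,\eta^\infty}=\sum_{|\mu|=d}\mathsf{S}^\mu_{\eta^0}\big|_{\text{shift}}\cdot \bW^{(-a,0)}_\mu\cdot \mathsf{S}^\mu_{\eta^\infty}.
\]
The left side has denominator of the claimed form by \cite{moop,mpt}, the edge factor $\bW^{(-a,0)}_\mu$ is a monomial in $q$, and varying $\eta^0$ over the Nakajima basis yields a square linear system whose matrix, evaluated in leading order, equals the $T$-equivariant pairing matrix on $\mathrm{Hilb}(\C^2,d)$. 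This matrix is invertible over $\Q(s_1,s_2,s_3)$, so inverting it does not introduce any new $q$-poles. Solving for $\mathsf{S}^\mu_{\eta^\infty}$ gives the required denominator bound.

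\textbf{Step 2: Cap, tube, and induction.} Substituting Step 1 into the localization formula \eqref{fred} for the $\mathbf{T}$-depth $0$ cap, since the edge factor $\bW^{(0,0)}_\mu$ is independent of $q$ and the vertex $\bW^{\mathsf{Vert}}_\mu$ is a formal $q$-series with coefficients in $\Q(s_1,s_2,s_3)$ (hence introduces no $q$-denominators), the cap partition function
$\mathsf{Z}^{\mathsf{cap}}_{d,\eta}\bigl(\prod \tau_{i_j}([0])\bigr)^{\mathbf{T}}$ inherits the claimed denominator form. I then run the induction of Section~\ref{444}. Lemma \ref{nndd} converts cap to tube via the matrix $M_d$ whose leading $q^d$ coefficient is invertible over $\Q(s_1,s_2)$, so the matrix inversion introduces only factors of $q^{-k}$. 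Lemma \ref{p45} (and its twisted variant Lemma \ref{p456}) converts tube back to cap with depth raised by one via rubber calculus, and the additional terms that appear are further rubber and tube series with denominators already controlled at the previous inductive stage. Proposition \ref{qqnnn} (twisted tubes) is treated similarly. At no stage does an operation arise that could introduce factors outside the allowed form.

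\textbf{Step 3 and main obstacle.} Theorem \ref{tnnn} (for even cohomology insertions, i.e.\ Proposition \ref{pnnn}) follows by the degeneration argument of Lemma \ref{rtt5}, whose matrix manipulations again preserve the denominator structure; Theorem \ref{onnn} follows by a further degeneration. The principal obstacle is Step 1: controlling the denominator of $\mathsf{S}^\mu_\eta$ itself, rather than only of various evaluations of it. This hinges on the invertibility (over $\Q(s_1,s_2,s_3)$, not merely $\Q(q,s_1,s_2,s_3)$) of the leading-order matrix in the $(-a,0)$-tube localization, which is a purely classical equivariant calculation on $\mathrm{Hilb}(\C^2,d)$. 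Once this obstacle is cleared, the remainder of the argument is a disciplined bookkeeping exercise on top of the rationality proofs already carried out in Sections \ref{333}--\ref{555}.
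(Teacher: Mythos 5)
Your overall strategy --- tracing denominators through the rationality proofs --- is indeed the one the paper follows, but your argument breaks at the two places where a matrix of $q$-series must be inverted, and your way past both is the same incorrect principle: that invertibility of the leading-order coefficient of a matrix implies its inverse introduces no new $q$-poles. Leading-order invertibility only guarantees that the inverse exists as a formal Laurent series in $q$; when the entries are rational functions of $q$ with denominators of the form $1-(-q)^r$, the inverse acquires the determinant in its denominator, and the zeros of that determinant are a priori uncontrolled. This undermines your Step 1 (solving the $(-a,0)$-tube localization system for $\mathsf{S}^\mu_\eta$ --- which is in any case circular, since the matrix you must invert is built from the very shifted $\mathsf{S}$-series whose denominators you are trying to bound) and your Step 2 (the inversion associated to $M_d$ in Lemmas \ref{rtt5} and \ref{nndd}). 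The paper resolves the first point not by localization bookkeeping but by importing from \cite{hilb2} the fact that $\mathsf{S}$ is a fundamental solution of the quantum differential equation of $\text{Hilb}(\C^2,d)$, whose singularities in $-q$ lie only at $0$ and at $r$-th roots of unity with $r\le d$; and it resolves the second by proving a new statement, Proposition \ref{gbb5}, that a specific square relative/descendent correspondence matrix is triangular with respect to the length ordering with \emph{monomial} diagonal entries, so that its inverse manifestly preserves the allowed denominator form. Both ingredients are absent from your proposal, and neither follows from the rank statements you cite.

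A second, smaller gap: you assert that the vertex $\bW^{\mathsf{Vert}}_\mu$ ``introduces no $q$-denominators'' because it is a formal $q$-series with coefficients in $\Q(s_1,s_2,s_3)$. A formal series summing to a rational function can have any denominator whatsoever. What actually controls the vertex contribution is the Laurent \emph{polynomiality} (not mere rationality) of the evaluations in Theorem \ref{canpole}, a point the paper flags explicitly in Section \ref{ennd}; the denominator bound for Proposition \ref{cttt} is then obtained from the evaluations at $s_3=\frac{1}{a}(s_1+s_2)$ together with the Vandermonde interpolation, which acts only in the $s_3$ variable and therefore cannot spoil the $q$-denominators.
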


Theorem \ref{2222} is proven by  carefully tracing
the denominators through the proofs of Theorems \ref{onnn}-\ref{cnnn}.
When the descendents of odd cohomology are included,
the strategy of Section 5 of \cite{vir} requires
matrix inversions{\footnote{Specifically,
the matrix associated to Lemma 5.6 of \cite{vir}
has an inverse with denominators we cannot
at present constrain.}} for which we can not control the
denominators.

Theorem \ref{2222} is new even when {\em no} descendents
are present. For the trivial bundle
$$N = \OO_{\mathbb{P}^1} \oplus \OO_{\mathbb{P}^1} \rightarrow
\mathbb{P}^1\ ,$$
the $T$-equivariant partition $\mathsf{Z}^{N/S}_{d,\eta^1,\eta^2,\eta^3}$
of Theorem \ref{tnnn}
is (up to $q$ shifts) equal to the 3-point
function $$\langle \eta^1, \eta^2,\eta^3 \rangle$$
in the quantum cohomology of the Hilbert scheme of points
of $\mathbb{C}^2$, see \cite{hilb1,lcdt}.

\vspace{10pt}
\noindent{\bf Corollary.} {\em  The  3-point functions in the $T$-equivariant
quantum cohomology of $\text{Hilb}(\mathbb{C}^2,d)$
have possible poles in -q only at the $r^{th}$ roots of
unity for $r$ at most $d$.}
\vspace{10pt}

\begin{proof}
By Theorem \ref{2222}, we see the possible poles in $-q$
of the 3-point
functions are at $0$ and the $r^{th}$ roots of
unity for $r$ at most $d$.
By definition, the 3-point functions have
no poles at 0.
\end{proof}

\subsection{Denominators for Proposition \ref{cttt}}
We follow here the notation used in the proof of Proposition 
\ref{cttt} in Section \ref{333}.

The matrix $\mathsf{S}_\eta^\mu$ is a fundamental solution of a
linear differential equation with singularities only at
0 and $r^{th}$ roots of unity for $r$ at most $d$, see
 \cite{hilb2}. Hence, the poles in $-q$ of the evaluation 
$$\mathsf{S}_\eta^\mu
|_{s_3=\frac{1}{a}(s_1+s_2)}$$ 
can occur only at $0$ and  $r^{th}$ roots of unity for $r$ at most $d$.
The denominator claim of Theorem \ref{2222} for 
Proposition \ref{cttt} then follows directly from the proof
in Section \ref{ggtt2}. 

While only the rationality  of Theorem \ref{canpole} is needed 
in the proof of Proposition \ref{cttt}, the much stronger Laurent
{polynomiality} 
of Theorem \ref{canpole} is used here.

\subsection{Denominators for $T$-equivariant stationary theory}
Consider the denominators of
\begin{equation*}
{\mathsf Z}^{N/S}_{d,\eta^1,\dots,\eta^r} 
\left( \prod_{j=1}^k \tau_{i_j}(\mathsf{p})   
\right)^T \ .
\end{equation*}
The denominator result for the $T$-equivariant stationary theory of
the
$(0,0)$-cap is obtained from the denominator result for
Proposition \ref{cttt} by the specialization $s_3=0$.
By degenerating all the descendents $\tau_{i_j}(\mathsf{p})$
on to a $(0,0)$-cap, we need only study the denominators
of $T$-equivariant 
partition functions ${\mathsf Z}^{N/S}_{d,\eta^1,\dots,\eta^r}$
with no descendent insertions.

The denominator result for 
the $T$-equivariant $(a,b)$-tube with no descendents
is again a consequence
of the study of the fundamental solution in \cite{hilb2}.
By repeated degenerations (using the $(a,b)$-tube for the
twists in $N$), we need only study the 
 denominators
of $T$-equivariant 
partition functions ${\mathsf Z}^{(0,0)}_{d,\eta^1,\eta^2,\eta^3}$
with 3 relative insertions.

\subsection{Relative/descendent correspondence}

Relative conditions in the theory of local curves
were exchanged for descendents
in the proof of Lemma \ref{rtt5}.
For the denominator result for
${\mathsf Z}^{(0,0)}_{d,\eta^1,\eta^2,\eta^3}$, we require a
more efficient correspondence.

\begin{prop}\label{gbb5}
Let $d>0$ be an integer.
The square matrix with coefficients
\begin{equation}\label{gredd}
\mathsf{Z}^{\mathsf{cap}}_{d,\lambda}\left( \tau_{\mu_1-1}([0])
\cdots \tau_{\mu_{\ell(\mu)}-1}([0]) \right)^T
\end{equation}
as  $\lambda$ and $\mu$ vary among
partitions of $d$ 
\begin{enumerate}
\item[(i)] is triangular with respect to the partial ordering
           by length, 
\item[(ii)] has
diagonal entries given
           by monomials in $q$,
\item[(iii)] and is of
maximal rank.
\end{enumerate}
\end{prop}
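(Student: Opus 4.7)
The plan is to extract the leading $q^d$ coefficient of each matrix entry and identify it with a classical $T$-equivariant intersection on $\text{Hilb}(\C^2,d)$; statements (i) and (iii) at leading order then reduce to a standard triangular expansion in the Nakajima basis, while (ii) is the remaining technical point.

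First, the moduli space $P_n(N/N_\infty,d)$ is empty for $n<d$ (any degree-$d$ stable pair on the cap has Euler characteristic $\geq d$), and at $n=d$ it is smooth and identified with $\text{Hilb}(\C^2,d)$ via the boundary map $\epsilon_\infty$, with virtual class equal to the fundamental class. Each matrix entry is therefore a Laurent series in $q$ supported in degrees $\geq d$, and its leading coefficient is a classical equivariant integral on $\text{Hilb}(\C^2,d)$. Under this identification, the descendent $\tau_k([0]) = \pi_{P*}(\pi_N^*[0] \cdot \text{ch}_{k+2}(\FF))$ restricts to an explicit class $D_k$ of complex codimension $k$, built from the Chern character of the universal quotient sheaf intersected with the fiber $N_0$.

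Second, I would expand $\prod_{j=1}^{\ell(\mu)} D_{\mu_j-1}$ in the Nakajima basis of $H^*_T(\text{Hilb}(\C^2,d))$. By codimension counting the product lies in the degree $2(d-\ell(\mu))$ component, which is spanned by $\{\CC_\nu: \nu\vdash d,\ \ell(\nu)=\ell(\mu)\}$. Hence the leading $q^d$ coefficient vanishes whenever $\ell(\lambda)\neq \ell(\mu)$, which gives the block-diagonal structure with respect to length and establishes~(i). Within each fixed-length block, the transition between $\{\prod_j D_{\mu_j-1}\}$ and $\{\CC_\nu\}$ is the standard Fock-space transition between the Chern-character basis and the power-sum (Nakajima) basis, which is triangular with respect to dominance order on partitions and has nonzero diagonal. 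Combined with the orthogonality $g_{\lambda\nu}=c\,\delta_{\lambda\nu}$ of the Nakajima basis, this yields that the leading $q^d$-coefficient matrix is triangular with nonzero diagonal entries $c_\mu g_{\mu\mu}\in\Q(s_1,s_2)^\times$, giving~(iii).

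The main obstacle is (ii): showing the diagonal entries are honest monomials $c_\mu q^d$ rather than series beginning with $q^d$. For this I would apply the $\mathbf{T}$-equivariant capped vertex formula
\[
\mathsf{Z}^{\mathsf{cap}}_{d,\mu}\Bigl(\textstyle\prod_j\tau_{\mu_j-1}([0])\Bigr)^{\mathbf{T}} = \sum_{|\nu|=d} \bW^{\mathsf{Vert}}_\nu\Bigl(\textstyle\prod_j\tau_{\mu_j-1}([0])\Bigr)\cdot \bW^{(0,0)}_\nu\cdot \mathsf{S}^\nu_\mu
\]
and specialize $s_3=0$ to recover the $T$-equivariant invariants. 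The dimension count $\delta = 2d - 2\ell(\mu)$, leaving excess virtual dimension $2\ell(\mu)$, together with the specific shape of the descendent insertion, should force all $n>d$ contributions from the vertex sums $\bW^{\mathsf{Vert}}_\nu$ and the rubber series $\mathsf{S}^\nu_\mu$ to cancel after specialization, by a pole-cancellation argument in the spirit of Theorem~\ref{canpole} combined with the rubber calculus of Section~\ref{papap}. Verifying this cancellation explicitly is the delicate step of the proof.
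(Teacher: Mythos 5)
Your argument has two genuine gaps, one in part (i) and one in part (ii). For (i): triangularity is a statement about the \emph{entire} $q$-series, and your dimension count lives only on the $n=d$ moduli space $\text{Hilb}(\C^2,d)$; it says nothing about the coefficients of $q^n$ for $n>d$, which are integrals over $P_n(N/N_\infty,d)$ with strictly positive excess dimension and which need not vanish for any reason you have given. (A secondary problem: equivariantly, the degree $2(d-\ell(\mu))$ part of $H^*_T(\text{Hilb}(\C^2,d))$ is \emph{not} spanned by $\{\CC_\nu:\ell(\nu)=\ell(\mu)\}$ --- it also contains $s_1^as_2^b\,\CC_\nu$ for $\ell(\nu)>\ell(\mu)$ --- so even at leading order you get triangularity, not the block-diagonality you assert.) The paper's fix is to reweight the relative condition $\CC_\lambda$ by the class of the origin $[0]\in A^2_T(\C^2)$, which only rescales the entry by $(s_1s_2)^{\ell(\lambda)}$ but makes the integrand properly supported of codimension $2d+\ell(\lambda)-\ell(\mu)$; when $\ell(\lambda)>\ell(\mu)$ this exceeds the virtual dimension $2d$, and compactness (via the compactification $\C^2\times\PP^1\subset\PP^2\times\PP^1$) kills every $q^n$ coefficient simultaneously. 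That is the argument you are missing.

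For (ii) you have not given a proof at all: you propose running the capped localization formula and hoping for a pole cancellation "in the spirit of Theorem~\ref{canpole}," and you concede the cancellation is unverified. This is both the hard way and not the way it is done. With the $[0]$-weighting above, the diagonal entries have virtual dimension zero after all insertions, so each $q^n$ coefficient is a pure number (degree $0$ in $s_1,s_2$) and may be computed after any specialization; specializing to $s_1+s_2=0$, the holomorphic symplectic form on the $\C^2$ fibers forces the vanishing of all contributions with $n>d$ (the standard vanishing of \cite{mpt,lcdt}), which is exactly the statement that the diagonal entries are monomials $c_\mu q^d$. The nonvanishing of $c_\mu$ (needed for (iii), which otherwise does follow from your leading-coefficient analysis once triangularity is in place) is then reduced in the paper to the explicit pairing $s_1s_2\langle\tau_{c-1}\,|\,(c)\rangle_{\text{Hilb}(\C^2,d)}=1/c!$ from \cite{parttwo}.
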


\begin{proof}
The Proposition follows from the 
results of Section 4.6 of \cite{lcdt} applied to
the theory of stable pairs.
Our relative conditions $\lambda$ are defined
with identity weights in the $T$-equivariant cohomology
of $\mathbb{C}^2$. For the proof, we weight all the parts of
$\lambda$ with he $T$-equivariant class of the origin in
$\mathbb{C}^2$. Then, by compactness and dimension
constraints, the triangularity of the matrix is
immediate for partitions of different lengths.
On the diagonal, the expected dimension of
the integrals are 0. Using the compactification
\begin{equation}\label{jttm}
\mathbb{C}^2 \times \mathbb{P}^1 \subset \mathbb{P}^2 \times
\mathbb{P}^1
\end{equation}
as in Section 4.6 of \cite{lcdt}, we 
obtain the triangularity of equal length partitions.

Consider the Hilbert scheme of points ${\text{Hilb}}(\com^2,d)$ of the
plane.
Let
${\mathbb{F}}$ be the universal quotient sheaf on 
$$\text{Hilb}(\com^2,d) \times \com^2,$$ and define the 
descendent{\footnote{The Chern character of $\mathbb{F}$
is properly supported over ${\text{Hilb}}(\com^2,d)$.} }
\begin{equation*}
\tau_k=\pi_*\Big( {\text{ch}}_{2+k}({\mathbb{F}})\Big) 
\in A^{k}({\text{Hilb}}(\com^2,d),\mathbb{Q})
\end{equation*}
as before \eqref{mrr}.
Using the compactification \eqref{jttm}, we
reduce the calculation of the diagonal entries to
the pairing
\begin{equation}\label{appp}
 s_1s_2\ \Big\langle \tau_{c-1} \ \Big| \ (c) \Big\rangle _
{{\text{Hilb}}(\com^2,d)}
= \frac{1}{c!}\ 
\end{equation}
which appears in \cite{parttwo}.

We conclude the diagonal entries do not vanish.
The diagonal entries are monomial in $q$ by the usual
vanishing obtained by the holomorphic symplectic form on $\mathbb{C}^2$.
\end{proof}

The denominator result holds for the
nonvanishing entries of the correspondence
matrix \eqref{gredd}. Since the matrix is triangular
with monomials in $q$ on the diagonal, the denominator
result holds for the {\em inverse} matrix.

We can now establish the denominator result for
the $T$-equivariant 3-point function
${\mathsf Z}^{(0,0)}_{d,\eta^1,\eta^2,\eta^3}$.
We start with the descendent series
\begin{equation}\label{jj87}
{\mathsf{Z}}^{0,0}_{d,\eta^3}\left( 
\tau_{\mu_1-1}(\mathsf{p})
\cdots \tau_{\mu_{\ell(\mu)}-1}(\mathsf{p})\cdot \tau_{\widehat{\mu}_1-1}(\mathsf{p})
\cdots \tau_{\widehat{\mu}_{\ell(\widehat{\mu})}-1}(\mathsf{p})\right)
\end{equation}
for partitions $\mu$ and $\widehat{\mu}$ of $d$.
The denominator result holds for all
series \eqref{jj87}. By bubbling all the
descendents $\tau_{\mu_i-1}(\mathsf{p})$ off of the point $0\in \mathbb{P}^1$
and
bubbling all the 
descendents $\tau_{\widehat{\mu}_i-1}(\mathsf{p})$ off of the point $1\in \mathbb{P}^1$, we conclude the
denominator result for ${\mathsf Z}^{(0,0)}_{d,\eta^1,\eta^2,\eta^3}$
from the denominator result for the 
inverse of the correspondence matrix \eqref{gredd}.

\subsection{Denominators for Theorems \ref{tnnn}-\ref{cnnn}}
The denominator result for Theorem \ref{cnnn} is obtained by following
the proof given in Sections \ref{depp}-\ref{444}. An important point is
to replace the matrix $M_d$ appearing in the proof of
Lemma \ref{rtt5} with the correspondence matrix \eqref{gredd}. The required
matrix inversion then keeps the denominator form.
The rest of the proof of Theorem \ref{cnnn} respects the
denominators.

Proposition \ref{pnnn} is the statement of Theorem \ref{tnnn} for descendents
of even cohomology. Again, the proof respects the denominators.
The proof of Theorem \ref{2222} is complete.\qed

\vspace{+8 pt}
\noindent
Department of Mathematics\\
Princeton University\\
rahulp@math.princeton.edu

\vspace{+8 pt}
\noindent
Department of Mathematics\\
Princeton University\\
apixton@math.princeton.edu

\end{document}